\date{November 23, 2019}
\title{%
Mixed type surfaces with
bounded Gaussian curvature 
in three-dimensional Lorentzian manifolds
}
\author{A.~Honda}
\address{%
(A.~Honda)
   Department of Applied Mathematics, 
   Faculty of Engineering, Yokohama National University, 
   79-5 Tokiwadai, Hodogaya, Yokohama 240-8501, Japan
}
\email{honda-atsufumi-kp@ynu.ac.jp}
\author{K.~Saji}
\address{%
(K.~Saji)
   Department of Mathematics, 
   Kobe University,
   Rokko 1-1, Nada, 
   Kobe 657-8501, Japan
}
\email{saji@math.kobe-u.ac.jp}
\author{K.~Teramoto}
\address{%
(K.~Teramoto)
   Institute of Mathematics for Industry, 
   Kyushu University,
   Motooka 744, 
   Fukuoka 819-0395, Japan
}
\email{k-teramoto@imi.kyushu-u.ac.jp}
\thanks{This work was supported by 
JSPS KAKENHI Grant Numbers 16K17605, 18K03301, 17J02151.}
\subjclass[2010]{%
Primary 53B30; 
Secondary 57R45, 
53A35, 
35M10. 
}
\keywords{%
Mixed type surface, 
surface with lightlike points, 
Lorentzian manifolds, 
Gaussian curvature, 
Gauss--Bonnet formula.
}
\theoremstyle{plain}
 \newtheorem{introtheorem}{Theorem}
 \newtheorem{introcorollary}[introtheorem]{Corollary}
 \newtheorem{theorem}{Theorem}[section]
 \newtheorem{proposition}[theorem]{Proposition}
 \newtheorem{fact}[theorem]{Fact}
 \newtheorem{lemma}[theorem]{Lemma}
 \newtheorem{corollary}[theorem]{Corollary}
\theoremstyle{definition}
 \newtheorem{definition}[theorem]{Definition}
\theoremstyle{remark}
 \newtheorem{remark}[theorem]{Remark}
 \newtheorem*{remark*}{Remark}
 \newtheorem{example}[theorem]{Example}
 \newtheorem*{acknowledgement}{Acknowledgement}
\numberwithin{equation}{section}
\newcommand{\vect}[1]{\boldsymbol{#1}}
\newcommand{\inner}[2]{\left\langle{#1},{#2}\right\rangle}
\newcommand{\ep}{\varepsilon}
\newcommand{\R}{\boldsymbol{R}}
\newcommand{\N}{\boldsymbol{N}}
\newcommand{\Z}{\boldsymbol{Z}}
\newcommand{\C}{\boldsymbol{C}}
\newcommand{\sgn}{\operatorname{sgn}}
\begin{document}
\begin{abstract}
A {\it mixed type surface\/}
is a connected regular surface in a Lorentzian 3-manifold
with non-empty spacelike and timelike point sets.
The induced metric of a mixed type surface
is a signature-changing metric,
and their lightlike points may be regarded 
as singular points of such metrics.
In this paper, we investigate the behavior of Gaussian curvature 
at a non-degenerate lightlike point of a mixed type surface.
To characterize the boundedness of Gaussian curvature
at a non-degenerate lightlike points,
we introduce several fundamental invariants along
non-degenerate lightlike points, 
such as the {\it lightlike singular curvature}
and the {\it lightlike normal curvature}.
Moreover, using the results by Pelletier and Steller,
we obtain the Gauss--Bonnet type formula 
for mixed type surfaces with bounded Gaussian curvature.
\end{abstract}
\maketitle

\section{Introduction}

Let $M^3$ be a Lorentzian $3$-manifold. 
A {\it mixed type surface} in $M^3$
is a connected regular surface 
whose spacelike and timelike point sets are both non-empty.
Although mixed type surfaces have no singular points 
as smooth maps, 
we may regard the {\it lightlike points} of a mixed type surface 
as {\it singular points} of the induced metric 
(i.e., the first fundamental form).
Here, a singular point of a metric 
is defined as a point
at which the metric is degenerate.
In this paper, we study 
the lightlike points of mixed type surfaces
in the way similar to the case of 
singular points of smooth maps, especially, of {\it wave fronts}.

\begin{figure}[htb]
\begin{center}
 \begin{tabular}{{c}}
  \resizebox{4.5cm}{!}{\includegraphics{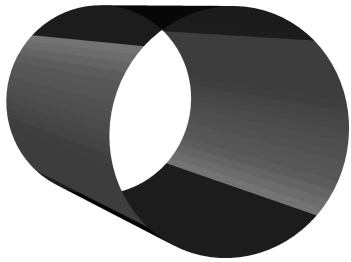}}
 \end{tabular}
 \label{fig:fronts}
 \caption{
 A mixed type surface in 
 the Lorentz-Minkowski $3$-space $\R^3_1$
 given by a cylinder over a circle on the timelike plane
 (cf.\ Example \ref{ex:flat-torus}).
 The dark (resp.\ bright) part is the image 
 of the spacelike (resp.\ timelike) point sets 
 of the surface. 
 }
\end{center}
\end{figure}

\subsection{Motivation}

Consider a mixed type surface
in a Lorentzian $3$-manifold $M^3$
given by an immersion 
$f : \Sigma \to M^3$
of a $2$-manifold $\Sigma$.
The induced metric $ds^2$ of $f$
is a signature-changing metric on $\Sigma$.
The geometry of $2$-manifold endowed 
with a signature-changing metric is studied 
\cite{AFL, KosKri, KosKri1, KosKri2,
IzumiyaTari2010, IzumiyaTari2013, 
GR, Rem-Pseudo, Rem15, Rem-Tari, 
GKT, KT, Mier, Steller, Pelletier}.
Signature-changing metrics are also considered
in the quantum theory of gravitation and general relativity
(cf.\ \cite{Sakhar, AB}).
Mixed type surfaces with zero mean curvature 
(mixed type ZMC surfaces, for short) are studied
\cite{FKKRSTUYY, FRUYY, FKKRSUYY_okayama, 
FKKRSUYY_osaka, FKKRUY_qj, FKKRUY_osaka, FHKUY, 
Gu, Klyachin}.
Mixed type ZMC surfaces are of importance
from the fluid mechanical point of view.
The type-change of causal characters of
mixed type ZMC surfaces
corresponds to the change of a stream function
being from subsonic to supersonic
(for more precise, see \cite{FKKRSUYY_okayama}).
We remark that, in \cite{HKKUY},
it was proved that there do not exist
mixed type surfaces with 
non-zero constant mean curvature 
(see also \cite{UY_geloma, UY_2018}).

On the other hand,
the geometry of singularities of smooth maps, 
in particular of wave fronts,
is under intense investigation.
In \cite{SUY1},
the fundamental invariants of cuspidal edge, 
called the {\it singular curvature} $\kappa_s$ and 
the {\it limiting normal curvature} $\kappa_{\nu}$,
were introduced.
They have
the following significant properties:
\begin{itemize}
\item
The singular curvature $\kappa_s$
affects the shape of cuspidal edge 
\cite[Theorem 1.17]{SUY1}, and 
tends to $-\infty$ when 
the cuspidal edge accumulates to
a swallowtail \cite[Corollary 1.14]{SUY1}.
Moreover, $\kappa_s$ appears in the remainder term 
of the Gauss-Bonnet type formula,
see \cite{SUY1, SUY2, SUY4} (cf.\ \cite{Kos2}).
As proved in \cite[Proposition 1.8]{SUY1}, 
$\kappa_s$ is an {\it intrinsic} invariant
(namely, $\kappa_s$ can be expressed 
in terms of the first fundamental form).
\item
The limiting normal curvature $\kappa_{\nu}$
is closely related to the boundedness of the Gaussian curvature
on wave fronts
(\cite[Theorem 3.1]{SUY1}, \cite[Theorem 3.9]{MSUY}),
and $\kappa_{\nu}$ is an {\it extrinsic} invariant
(\cite[Corollary B]{NUY}).
Unlike the case of the singular curvature,
$\kappa_{\nu}$ can be extended continuously across 
the swallowtail singularity
(\cite[Proposition 2.9]{MSUY}).
\end{itemize}
Based on the criteria for the cuspidal edge and the swallowtail
given in \cite{KRSUY},
the notion of 
the {\it singular points of the first kind} 
(resp.\ {\it of the second kind\/}) 
was introduced in \cite{MSUY},
which can be regarded as an 
generalization of cuspidal edges (resp.\ swallowtails),
reflecting their intrinsic natures
(cf.\ $A_2$- and $A_3$-points in \cite{SUY2, SUY4, SUY5, HHNSUY}).

The purpose of this paper
is to develop the theory of mixed type surfaces
using the method of wave fronts.
In particular,
we introduce several invariants of
lightlike points of the first kind,
such as the {\it lightlike singular curvature} $\kappa_L$
and the {\it lightlike normal curvature} $\kappa_N$, 
and investigate the behavior of 
the geometric quantities, such as Gaussian curvature, of
mixed type surfaces 
near the lightlike points via their invariants.

\subsection{Statement of results}

For more precise, let $f:\Sigma\to M^3$ be a mixed type surface
in a Lorentzian $3$-manifold $M^3=(M^3,\bar{g})$.
A point is said to be a {\it lightlike point}
if the first fundamental form
$ds^2:=f^*\bar{g}$ (cf.\ \eqref{eq:1FF})
degenerates at $p$.
Denote by $LD\,(\subset\Sigma)$ the lightlike point set of $f$,
which is also called the {\it locus of degeneracy}.
On a coordinate neighborhood $(U;u,v)$,
set a smooth function 
$\lambda:=EG-F^2$,
where $ds^2=E\,du^2+2F\,du\,dv+G\,dv^2$.
Then, a point $p\in U$ is lightlike 
if and only if $\lambda(p)=0$.
A lightlike point $p$ satisfying
$d\lambda(p)\ne0$ is said to be {\it non-degenerate}.
By the implicit function theorem, 
the lightlike point set $LD$
can be parametrized by a regular curve
near $p$ (called the {\it characteristic curve\/}). 

A non-degenerate lightlike point $p\in LD$
is said to be {\it of the first kind}
(resp.\ {\it of the second kind\/}) if
$df_p(\vect{v}) \in T_{f(p)}M^3$ is spacelike (resp.\ lightlike)
for each non-zero tangent vector $\vect{v}\in T_pLD$
(Definition \ref{def:types-LPT}).
If $p$ is a lightlike point of the first kind,
then the image $f(LD)$ is 
a spacelike regular curve in $M^3$ near $p$.
A lightlike point $p$ of the second kind
is called {\it an $L_{\infty}$-point}
if the image $f(LD)$ is 
a lightlike regular curve in $M^3$ near $p$.
If not, a lightlike point $p$ of the second kind 
is said to be {\it admissible} 
(cf.\ Definition \ref{def:types-LPT}).
We remark that the notion of 
lightlike points of the first kind
can be considered as an analogue 
for the cuspidal edge singularity of wave fronts,
see Remark \ref{rem:L-conelike}.
We also introduce the notion of 
{\it $L_k$-points} $(k\geq3)$
in Definition \ref{def:types-LPT}.
Such $L_k$-points are lightlike points $p$ of the second kind,
and the notion of $L_3$-points 
can be considered as an analogue 
for the swallowtail singularity of wave fronts.

Then, we define several invariants of
lightlike points of the first kind,
such as the {\it lightlike singular curvature} $\kappa_L$
and the {\it lightlike normal curvature} $\kappa_N$
(see Definition \ref{def:invariants},
cf.\ Proposition \ref{prop:curvature-G}).
Like the case of the singular curvature $\kappa_s$,
the lightlike singular curvature $\kappa_L$
affects the shape of mixed type surfaces (Corollary \ref{cor:shape}).
Moreover, the following holds:

\begin{introtheorem}\label{thm:introA}
Let $f:\Sigma\to M^3$ be a mixed type surface
in a Lorentzian $3$-manifold $M^3$
and $p\in \Sigma$ a lightlike point 
of the admissible second kind.
Then,
\begin{itemize}
\item[{\rm (i)}] 
the lightlike singular curvature $\kappa_L$ tends to $-\infty$ at $p$.
\item[{\rm (ii)}] 
the lightlike normal curvature
$\kappa_N$ converges to $0$ or diverges to $\pm\infty$ at $p$.
In particular, if $p$ is not an $L_3$-point, 
then $\kappa_N$ tends to $-\infty$ at $p$.
\end{itemize}
\end{introtheorem}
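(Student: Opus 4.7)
The plan is to work in an adapted local coordinate system $(u,v)$ around $p$ in which $LD=\{u=0\}$, $p=(0,0)$, and $\partial_v$ is tangent to the characteristic curve. In these coordinates the first-kind condition at $(0,v)$ becomes $\inner{f_v}{f_v}(0,v)>0$, while the admissible second-kind condition at $p$ forces $\inner{f_v}{f_v}(0,0)=0$ together with the requirement that the function $v\mapsto \inner{f_v}{f_v}(0,v)$ vanishes at $v=0$ to some finite order $m\geq 1$ with strictly positive leading coefficient. Here $m$ is controlled by the $L_k$-type of $p$, and in particular $m=1$ whenever $p$ is not an $L_3$-point; this is the point where the admissibility hypothesis and the exclusion of $L_\infty$-points enter concretely.

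Next, I would rewrite $\kappa_L$ and $\kappa_N$, originally defined along the first-kind stratum, as explicit functions of $v\in(-\delta,\delta)\setminus\{0\}$ by expanding the formulas from Definition \ref{def:invariants} (cf.\ Proposition \ref{prop:curvature-G}). Geometrically, both invariants are normalized by a positive power of the arc-length element $\sqrt{\inner{f_v}{f_v}(0,v)}$ of the image curve $f(LD)\subset M^3$, and this element tends to zero as $v\to 0$. For (i), I would show that the numerator of $\kappa_L$ has a strictly negative leading coefficient, producing an asymptotic expansion of the form $\kappa_L(v)\sim -C\,|v|^{-\beta}$ with $C>0$ and $\beta>0$, and hence $\kappa_L\to -\infty$. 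For (ii) the same scheme yields an expansion $\kappa_N(v)\sim A\,v^{\mu}$ with a rational (possibly negative) exponent $\mu$, so a priori $\kappa_N$ converges to $0$ or diverges to $\pm\infty$; when $p$ is not an $L_3$-point, the exponent $\mu$ is strictly negative and the same sign analysis pins down $A<0$, giving $\kappa_N\to -\infty$. In the $L_3$-case a leading-order cancellation between numerator and denominator of $\kappa_N$ permits any finite value or either infinity to occur, in parallel with the behavior of $\kappa_\nu$ at swallowtails described in \cite{MSUY}.

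The main obstacle is controlling the \emph{signs} of the leading coefficients. The statement asserts not merely that these invariants blow up, but that they do so in a definite direction, and this requires careful bookkeeping of the Lorentzian inner products of the coordinate vector fields $f_u$, $f_v$ and their derivatives along $LD$. The decisive input is that the causal character of the induced metric changes as one crosses $LD$, so $\inner{f_u}{f_u}$ and $\inner{f_v}{f_v}$ must exhibit opposite sign behavior on the two sides of $LD$; this in turn pins down the sign of the combinations of first- and second-order data of $f$ at $p$ that appear in the numerators of $\kappa_L$ and $\kappa_N$. A secondary technical point is to translate the $L_k$-type of $p$ unambiguously into the vanishing order $m$ and, from there, into the exponents $\beta$ and $\mu$ above, verifying along the way that no hidden cancellations occur in the generic (non-$L_3$) case.
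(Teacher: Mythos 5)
Your overall strategy---restrict to the characteristic curve, express $\kappa_L$ and $\kappa_N$ via Proposition \ref{prop:curvature-G}, and extract the leading-order behaviour as one approaches the second-kind point---is indeed the paper's strategy (it works in a characteristic coordinate system with $LD=\{v=0\}$ and null field $\eta=\partial_u+\ep(u)\partial_v$, $\ep(0)=0$). However, several steps you rely on are unjustified or incorrect. First, admissibility does \emph{not} give a finite vanishing order $m$ for $v\mapsto\inner{f_v}{f_v}$ along $LD$: it only guarantees a sequence of first-kind points converging to $p$, and the relevant function may vanish to infinite order without vanishing identically near $p$. The paper avoids this entirely by showing that the \emph{products} $\ep(u)^{4/3}\kappa_L$ and $\ep(u)^{8/3}\kappa_N$ extend continuously with a \emph{negative} value at $u=0$ and then letting $\ep\to 0$ along the sequence; no fractional-power asymptotic expansion is needed, nor is one available in general. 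Second, your dictionary between $m$ and the $L_k$-type is wrong: since $\inner{\hat\gamma'}{\hat\gamma'}\ge 0$ along $LD$ (it equals $\ep^2 G$ in the paper's normalization), a finite vanishing order must be even, so $m=1$ is impossible; moreover the non-$L_3$ case corresponds to \emph{higher} degeneracy of $\delta$ (namely $\ep'(0)=0$), hence larger $m$, not $m=1$.

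Third, the sign mechanism you propose is not the one that works. The limits are $-E_v(0,0)^{2/3}/(2G(0,0))$ for $\ep^{4/3}\kappa_L$ and $-E_v(0,0)^{4/3}/(4G(0,0)^2)$ for $\ep^{8/3}\kappa_N$ in the non-$L_3$ case: negativity comes from the fact that the nonzero quantity $E_v(0,0)$ (Lemma \ref{lem:lambda_v}) enters through an \emph{even} power of its real cube root, multiplied by $G>0$; the sign of $E_v(0,0)$, i.e.\ which side of $LD$ is spacelike, is irrelevant, so ``opposite sign behaviour across $LD$'' cannot be the decisive input. Fourth, your conclusion in the $L_3$ case---that ``any finite value'' can occur---contradicts the very statement you are proving: the paper's point is that $\kappa_N(u_n)=u_n^{k-8/3}\,\nu_0(u_n)/\ep_0(u_n)^{8/3}$ with $k-\tfrac83\notin\Z$, so the exponent is never zero and a nonzero finite limit is excluded. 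Finally, the genuinely hard content of assertion (ii)---computing $\kappa_N$ along the characteristic curve, which requires the cross-product identity at lightlike points (Lemma \ref{lem:gaiseki}), the auxiliary field $\psi$, and the explicit lightlike normal $N(u)$ of Lemma \ref{lem:kNG}---is not addressed; as written, the proposal is a plan whose decisive computations and sign determinations are missing or mis-specified.
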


By the assertion (i) of Theorem \ref{thm:introA}, 
we have that $\kappa_L$ behaves similarly to 
the singular curvature $\kappa_s$
for the cuspidal edge singularity of wave fronts
(\cite[Corollary 1.14]{SUY1}).
However, 
the assertion (ii) of Theorem \ref{thm:introA}
implies that the behavior of $\kappa_N$ 
is different from that of 
the limiting normal curvature $\kappa_{\nu}$
for the cuspidal edge singularity of wave fronts,
since $\kappa_{\nu}$ can be extended continuously across 
the swallowtail singularity
(\cite[Proposition 2.9]{MSUY}).
We also introduce the invariants 
called the {\it lightlike geodesic torsion} $\kappa_G$
and the {\it balancing curvature} $\kappa_B$
(Definitions \ref{def:invariants} and \ref{def:kappa_B},
cf.\ Proposition \ref{prop:curvature-G}, 
Proposition \ref{prop:kappa_B_adapted}),
and prove the similar results in 
Theorems \ref{thm:kG} and \ref{thm:kB}.

Moreover,
unlike the case of wave fronts,
both $\kappa_L$ and 
$\kappa_N$ are related to the behavior of 
the Gaussian curvature $K$
at a lightlike point.
More precisely, we prove the following:

\begin{introtheorem}\label{thm:introB}
Let $f:\Sigma\to M^3$ be a mixed type surface
in a Lorentzian $3$-manifold $M^3$.
If the Gaussian curvature $K$ of $f$
is bounded on a neighborhood of 
a non-degenerate lightlike point $p\in \Sigma$, 
then $p$ must be of the first kind.
Moreover, for a lightlike point $p$ of the first kind,
$K$ is bounded on a neighborhood $U$ of $p$ 
if and only if 
$$
\kappa_L=0
\quad \text{and} \quad
\kappa_N=\kappa_B
$$
hold along the characteristic curve in $U$.
\end{introtheorem}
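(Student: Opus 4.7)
The plan is to express the Gaussian curvature of $f$ in local coordinates and extract the precise cancellations needed for its boundedness. In adapted coordinates $(u,v)$ with $LD = \{v=0\}$, using the Lorentzian cross product $\tilde{\bnu} := f_u \times f_v$ (which is continuous and nowhere zero across $LD$, and satisfies $\inner{\tilde{\bnu}}{\tilde{\bnu}} = -\lambda$), the Gauss equation takes the form
\begin{equation*}
K - K_M \;=\; \varepsilon\,\frac{P(u,v)}{\lambda(u,v)^2},
\qquad
P := \tilde L\,\tilde N - \tilde M^2,
\end{equation*}
where $\tilde L, \tilde M, \tilde N$ are the second fundamental form coefficients with respect to $\tilde{\bnu}$, $K_M$ is the sectional curvature of $M^3$ along the tangent plane of $f$, and $\varepsilon = \pm 1$ is a sign depending on the causal type. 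Since $\lambda$ vanishes to exactly first order in $v$ along $LD$ at a non-degenerate lightlike point, the boundedness of $K$ on a neighborhood of $p$ is equivalent to the smooth function $P$ vanishing to order $\geq 2$ along $\{v=0\}$, i.e., $P(u,0) \equiv 0$ and $(\partial_v P)(u,0) \equiv 0$. I will prove (ii) first, and deduce (i) from it.

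For the forward direction of (ii), take a first-kind point $p$ and choose adapted coordinates so that $\partial_u$ is the unit-speed tangent to $LD$ and $\partial_v$ lies in the null direction of $ds^2$ along $LD$. Then $E(u,0) \equiv 1$, $F(u,0) \equiv 0$, $G(u,0) \equiv 0$, and $G_v(u,0) \neq 0$ by non-degeneracy. Since $\tilde{\bnu}(u,0)$ is null and orthogonal to the null vector $f_v(u,0)$, the two must be parallel, so $\tilde{\bnu}(u,0) = c(u)\,f_v(u,0)$ with $c(u) \ne 0$. Differentiating $\inner{f_u}{f_v}=F$, $\inner{f_u}{f_u}=E$, and $\inner{f_v}{f_v}=G$ and using $F(u,0)\equiv G(u,0)\equiv 0$ gives, at $v = 0$,
\begin{equation*}
\tilde L = -\tfrac{c}{2}\,E_v,
\qquad \tilde M = 0,
\qquad \tilde N = \tfrac{c}{2}\,G_v,
\end{equation*}
so $P(u,0) = -c(u)^2\,E_v(u,0)\,G_v(u,0)/4$. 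By the coordinate formula for the lightlike singular curvature (Proposition \ref{prop:curvature-G}), the vanishing $E_v(u,0) = 0$ along $LD$ is equivalent to $\kappa_L \equiv 0$; this is the first boundedness condition.

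Assuming $\kappa_L \equiv 0$, one has $\tilde L(u,0) = \tilde M(u,0) = 0$, so $(\partial_v P)(u,0) = \tilde L_v(u,0)\,\tilde N(u,0)$, which vanishes if and only if $\tilde L_v(u,0) = 0$ (since $\tilde N(u,0) \neq 0$). The key technical step---and the principal obstacle of the proof---is to match this with the identity $\kappa_N = \kappa_B$: expanding $\tilde L_v = \partial_v \inner{f_{uu}}{\tilde{\bnu}}$ via the product rule, substituting the conditions $E_v(u,0)=0$ and $F(u,0) = G(u,0) = 0$, and using the coordinate expressions of $\kappa_N$ and $\kappa_B$ from Definitions \ref{def:invariants} and \ref{def:kappa_B} (via Propositions \ref{prop:curvature-G} and \ref{prop:kappa_B_adapted}), one should obtain $\tilde L_v(u,0) = \rho(u)\,(\kappa_N(u) - \kappa_B(u))$ for some nowhere-vanishing factor $\rho(u)$. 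This completes the ``only if'' direction of (ii); the ``if'' direction follows by reversing the argument, since the vanishing of $P(u,0)$ and $(\partial_v P)(u,0)$ ensures $P/\lambda^2$ is bounded.

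Finally, for part (i), suppose for contradiction that $K$ is bounded near $p$ but $p$ is of the second kind. Since the degenerate tangent plane $df(T_p\Sigma)$ admits no timelike directions, every nearby point of $LD$ is of the first or second kind. If $p$ is an $L_\infty$-point, take adapted coordinates with $\partial_u$ tangent to the lightlike characteristic curve, so that $E(u,0) \equiv F(u,0) \equiv 0$ and $\tilde{\bnu}(u,0) = c(u)\,f_u(u,0)$ (both null, mutually orthogonal); an analogous computation then gives $P(u,0) = -c(u)^2\,E_v(u,0)^2/4$, which is nowhere zero by non-degeneracy, contradicting boundedness of $K$. If $p$ is of the admissible second kind, then points $q \in LD\setminus\{p\}$ near $p$ are all of the first kind, and the already-proved part (ii) forces $\kappa_L(q) = 0$ for all such $q$; however, Theorem \ref{thm:introA}(i) asserts $\kappa_L(q) \to -\infty$ as $q \to p$ along $LD$, a contradiction. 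Hence $p$ must be of the first kind.
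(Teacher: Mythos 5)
Your overall strategy coincides with the paper's: write $\hat K:=\lambda^2K$ as a smooth function (your $-P+\lambda^2c_{\bar g}$ is exactly the paper's \eqref{eq:Khat-DEF}, with $\tilde L,\tilde M,\tilde N$ equal to the $\hat h_{ij}$ there), expand it to second order in $v$ along $LD$, and identify the zeroth- and first-order coefficients with $\kappa_L$ and $\kappa_N-\kappa_B$; part~(i) is then handled exactly as in the paper, via Theorem~\ref{thm:introA}(i) for the admissible second kind and a direct computation showing $P(u,0)\ne0$ at an $L_\infty$-point (your $P(u,0)=-c^2E_v^2/4$ matches the paper's $\hat K(0,v)=\tfrac14E\hat G^2>0$ with the roles of $u$ and $v$ swapped). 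Everything you actually compute is correct: $P(u,0)=-\tfrac{c^2}{4}E_vG_v$, the equivalence $E_v(u,0)=0\Leftrightarrow\kappa_L=0$ (though the coordinate formula you want is Proposition~\ref{prop:kappa_L-adap}, not Proposition~\ref{prop:curvature-G}), the reduction of $\partial_vP(u,0)=0$ to $\tilde L_v(u,0)=0$ once $\kappa_L\equiv0$, and the reverse implication via the division lemma.

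The genuine gap is the step you yourself label ``the principal obstacle'': the identity $\tilde L_v(u,0)=\rho(u)\,(\kappa_N(u)-\kappa_B(u))$ with $\rho$ nowhere zero is asserted (``one should obtain'') but never derived, and it is the computational heart of the theorem. It is not a routine product-rule expansion: $\tilde L_v(u,0)$ contains the term $\inner{\nabla_uf_u}{\nabla_v(f_u\times f_v)}|_{v=0}$, so you need $f_u\times f_v$ to \emph{first} order in $v$, whereas your frame identity $\tilde{\bnu}(u,0)=c(u)f_v(u,0)$ only pins it down at $v=0$. The paper supplies exactly this missing input via the decomposition $f_u\times f_v=\sigma\sqrt{E}\,f_v+\lambda\psi$ (Lemma~\ref{lem:psi}), the identity $\kappa_N(u)=-\sigma\phi_{11}(u,0)$ with $\phi_{11}=\inner{\nabla_uf_u}{\psi}$ (Lemma~\ref{lem:kappa-N-sp}, which requires constructing the null co-frame $N$ out of $\psi$ and $f_u$), and the Taylor expansions \eqref{eq:Echeck}--\eqref{eq:Ahat} feeding into \eqref{eq:K-hat}; only after all of that does $\kappa_B$, whose definition \eqref{eq:kappa_B} involves $E_{vv}$, $F_{uv}$, $G_{vv}$ and $F_v$, emerge as the correct first-order partner of $\kappa_N$. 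The claimed identity is true (it is Proposition~\ref{prop:lambda-K} with $\rho=-\sigma$ in specially adapted coordinates), but until you carry out that computation the ``only if'' direction of (ii) --- and hence the theorem --- is not proved.
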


We remark that, 
in \cite{HKKUY} and \cite{UY_2018},
it was proved that,
if the mean curvature $H$ of a mixed type surface
is bounded 
at a non-degenerate lightlike point $p$,
then $p$ must be an $L_{\infty}$-point
(cf.\ Fact \ref{fact:HKKUY}, \cite{UY_geloma}).
Hence, the first assertion of Theorem \ref{thm:introB} 
can be regarded as the corresponding result
in the case of Gaussian curvature.

On the other hand, 
in \cite[Corollary B]{NUY},
it was proved that the limiting normal curvature $\kappa_{\nu}$ of 
a cuspidal edge is an extrinsic invariant.
Similarly, 
the lightlike normal curvature $\kappa_N$ of 
a mixed type surface is an extrinsic invariant
\cite{Honda-deform}.
However, 
in the case of vanishing lightlike singular curvature 
$\kappa_L=0$,
we can prove the following.

\begin{introcorollary}\label{cor:introC}
Let $f:\Sigma\to M^3$ be a mixed type surface
in a Lorentzian $3$-manifold $M^3$,
and let 
$p\in \Sigma$ be a lightlike point of the first kind.
If $\kappa_L=0$ holds along the characteristic curve near $p$,
then the lightlike normal curvature $\kappa_N$
is an intrinsic invariant.
\end{introcorollary}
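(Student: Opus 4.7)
The plan is to recover $\kappa_N$ from the leading singular behavior of the intrinsic Gaussian curvature $K$ of the induced signature-changing metric. The key observation is that, although $K$ is defined via the Gauss equation in the ambient Lorentzian 3-manifold, it equals the Brioschi curvature of the first fundamental form wherever $\lambda := EG - F^2 \neq 0$; in particular every coefficient in the asymptotic expansion of $K$ near a non-degenerate lightlike point $p$ is an intrinsic function of the first fundamental form.

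The steps I would carry out are as follows. First, fix an adapted coordinate system $(u,v)$ around $p$ so that the characteristic curve is $\{v = 0\}$, $\lambda$ vanishes to exactly first order along $\{v = 0\}$ (by non-degeneracy of $p$), and $u$ is a spacelike arclength parameter along $LD$ (available since $p$ is of the first kind). Second, compute the Brioschi formula and organize it as a Laurent-type expansion
\[
K(u,v) = \frac{A(u)}{v^{2}} + \frac{B(u)}{v} + O(1),
\]
whose coefficients $A(u), B(u)$ are manifestly intrinsic, depending only on finite $v$-jets of $E, F, G$ along $\{v=0\}$. Third, compare with the formula underlying Theorem \ref{thm:introB}: since $K$ is bounded near $p$ if and only if $\kappa_L = 0$ and $\kappa_N = \kappa_B$, we expect an identification $A = c_1 \, \kappa_L$ and, after imposing $\kappa_L = 0$, $B = c_2 \, (\kappa_N - \kappa_B)$, with a nowhere-vanishing intrinsic factor $c_2$. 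Under the hypothesis $\kappa_L = 0$ along the characteristic curve, this yields $A \equiv 0$ and $B(u) = c_2(u)(\kappa_N - \kappa_B)(u) = \lim_{v \to 0} v K(u,v)$, which is intrinsic. Combined with Proposition \ref{prop:kappa_B_adapted}, which expresses $\kappa_B$ in terms of the first fundamental form along $LD$, we conclude that $\kappa_N$ itself is intrinsic.

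The main obstacle is Step 3: refining the proof of Theorem \ref{thm:introB} from a mere characterization of when the singular terms of $K$ vanish into an explicit structural identification of $A$ and $B$ with the lightlike invariants. Concretely, one must verify that, after setting $\kappa_L = 0$, the coefficient $B$ depends on the extrinsic data only through the single combination $\kappa_N - \kappa_B$ (so that the extrinsic content of $\kappa_N$ is fully captured by $B$), and that the multiplier $c_2$ is a nowhere-vanishing intrinsic function along the characteristic curve. Once this refined expansion is in hand, intrinsic-ness of $\kappa_N$ follows at once from intrinsic-ness of $K$, $\kappa_B$, and $c_2$; the hypothesis $\kappa_L = 0$ is used precisely to annihilate the $1/v^{2}$-term that would otherwise obstruct taking the limit $v \to 0$ in the relation $v K \to c_2(\kappa_N - \kappa_B)$.
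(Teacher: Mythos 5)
Your proposal is correct and follows essentially the same route as the paper: the paper works with the smooth function $\hat{K}=\lambda^2 K$ in a specially adapted coordinate system and uses the expansion $\hat{K}=-\tfrac12\kappa_L+\tfrac12 v(\kappa_N-\kappa_B+\kappa_L\varphi)+v^2\hat{K}_0$ of Proposition \ref{prop:lambda-K}, which is exactly the ``structural identification'' of your coefficients $A$ and $B$ that you flag as the main obstacle (with $c_2=\tfrac12$ after normalizing $\lambda_v=1$). Under $\kappa_L=0$ this gives $\kappa_N=\kappa_B+2\hat{K}_v(u,0)$, and intrinsicness follows from that of $K$ and of $\kappa_B$ (Proposition \ref{prop:kappa_B_adapted}), just as you argue.
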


Such a phenomenon does not occur 
in the case of the cuspidal edge singularity
on wave fronts (see Remark \ref{rem:flat-CE}).

In the case of wave fronts in $\R^3$,
a Monge form of the cuspidal edge 
was given in \cite{MS},
which clarifies relationships among the invariants.
Moreover, by using the Monge form, we can provide 
many examples of the cuspidal edge easily.
In Proposition \ref{prop:monge},
we derive a Monge form of a mixed type surface
at a lightlike point of the first kind. 

Finally, in Section \ref{sec:GB},
applying the results by Pelletier \cite{Pelletier} and Steller \cite{Steller}
and Theorem \ref{thm:introB},
we obtain the Gauss--Bonnet type formula 
for mixed type surface with bounded Gaussian curvature.

\begin{introcorollary}\label{cor:introD}
Let $f : \Sigma \to (M^3,\bar{g})$ be a mixed type surface 
in a Lorentzian $3$-manifold $(M^3,\bar{g})$,
where $\Sigma$ is 
a connected compact oriented smooth $2$-manifold
without boundary.
If every lightlike point of $f$ is non-degenerate
and $f$ has bounded Gaussian curvature,
then
$$
  \int_{\Sigma} K\,dA = 2\pi \,\chi (\Sigma)
$$
holds, where $\chi (\Sigma)$ is the Euler characteristic of $\Sigma$.
\end{introcorollary}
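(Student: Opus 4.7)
The proof proceeds by reducing Corollary \ref{cor:introD} to the Gauss--Bonnet type theorems of Pelletier \cite{Pelletier} and Steller \cite{Steller} for signature-changing metrics, with the hypotheses of those theorems being supplied by Theorem \ref{thm:introB}.

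First, I would use Theorem \ref{thm:introB} to classify the lightlike points. If some non-degenerate lightlike point $p \in \Sigma$ were of the admissible second kind or an $L_\infty$-point, then $K$ would fail to be bounded near $p$, contradicting the hypothesis. Hence every lightlike point of $f$ is non-degenerate and of the first kind. By compactness of $\Sigma$ together with non-degeneracy, the locus of degeneracy $LD$ is a closed embedded $1$-submanifold of $\Sigma$, i.e., a finite disjoint union of smoothly embedded circles; consequently $\Sigma \setminus LD$ has finitely many connected components, each of which is either spacelike or timelike. Moreover, the second part of Theorem \ref{thm:introB} yields the pointwise identities
$$
\kappa_L \equiv 0 \quad \text{and} \quad \kappa_N \equiv \kappa_B
$$
along every characteristic curve in $LD$.

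Second, I would check that $\int_\Sigma K\, dA$ is well-defined. On any coordinate patch the area measure $dA = \sqrt{|\lambda|}\, du\, dv$ is continuous and bounded (vanishing on $LD$), and $K$ is bounded on $\Sigma$ by hypothesis. Combined with compactness of $\Sigma$, this gives absolute convergence. Third, I would invoke the Gauss--Bonnet formula of Pelletier \cite{Pelletier} and Steller \cite{Steller} for compact oriented $2$-manifolds carrying a type-changing metric whose lightlike locus consists of regular curves of the first kind. In that framework, $2\pi\,\chi(\Sigma)$ equals $\int_\Sigma K\, dA$ plus contributions localized along $LD$ that, after matching notation, can be expressed through the invariants $\kappa_L$ and the defect $\kappa_N - \kappa_B$. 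The identities obtained in the first step force each such contribution to vanish, giving $\int_\Sigma K\, dA = 2\pi\,\chi(\Sigma)$.

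The main technical obstacle will be identifying the invariants $\kappa_L$, $\kappa_N$, and $\kappa_B$ of the present paper with the intrinsic quantities appearing in the statements of Pelletier and Steller, and verifying that the integrability/finiteness hypotheses of their theorems are genuinely implied by — rather than merely analogous to — the bounded-Gaussian-curvature assumption via Theorem \ref{thm:introB}. Once this dictionary is set up, most naturally by working in the adapted coordinates of Proposition \ref{prop:monge}, Corollary \ref{cor:introD} follows at once.
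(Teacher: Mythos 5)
Your proposal follows essentially the same route as the paper: Theorem \ref{thm:introB} forces every lightlike point to be of the first kind with $\kappa_L\equiv 0$, and the conclusion is then read off from the Pelletier--Steller Gauss--Bonnet formula for generic signature-changing metrics (with $\bar{K}=K$ since $ds^2$ is never negative definite). One small correction to your picture of the external input: the hypothesis of that formula is not the vanishing of boundary contributions involving the defect $\kappa_N-\kappa_B$ (which is not an intrinsic quantity of the metric in general), but that the singular curves are pseudo-geodesics, which the paper shows (Lemmas \ref{lem:generic-Steller} and \ref{lem:Prop1-Steller}, using orthogonally adapted coordinates rather than the Monge form) is equivalent to $\kappa_L=0$ alone; the condition $\kappa_N=\kappa_B$ plays no role in this step.
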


\subsection{Organization of this paper}

This paper is organized as follows.
In Section \ref{sec:prelim},
we give fundamental properties
of the lightlike points of mixed type surfaces.
In particular, we show several conditions for lightlike points
to be of the first kind, see Definition \ref{def:types-LPT},
\eqref{eq:image-is-spacelike}, and 
Proposition \ref{prop:type1},
cf.\ Remark \ref{rem:L-conelike}.
In Section \ref{sec:invariant},
we define the invariants, such as 
the lightlike singular curvature $\kappa_L$,
the lightlike normal curvature $\kappa_N$,
the lightlike geodesic torsion $\kappa_G$,
and 
the balancing curvature $\kappa_B$
(see Definitions \ref{def:invariants} and \ref{def:kappa_B}).
In particular, 
we show that $\kappa_L$ is an intrinsic invariant
(Proposition \ref{prop:kappa_L-adap}),
and we prove the formula for $\kappa_B$ 
in terms of the adapted coordinate system
(Proposition \ref{prop:kappa_B_adapted}).
In Section \ref{sec:2nd-kind},
we prove Theorem \ref{thm:introA}.
More precisely, we first consider $\kappa_L$
and prove the assertion (i) 
of Theorem \ref{thm:introA}.
Next, to calculate $\kappa_N$ and $\kappa_G$,
we prepare a formula of the cross product 
at a lightlike point
(Lemma \ref{lem:gaiseki}).
Then, we give a proof of the assertion (ii) 
of Theorem \ref{thm:introA},
and Theorem \ref{thm:kG}.
Finally, we show a similar result about 
$\kappa_B$ in Theorem \ref{thm:kB}.
In Section \ref{sec:Gauss},
we investigate the behavior of 
Gaussian curvature $K$,
and prove Theorem \ref{thm:introB} and 
Corollary \ref{cor:introC}.
We also show that $\kappa_L$
affects the shape of mixed type surfaces (Corollary \ref{cor:shape}),
and 
that the principal curvatures of a mixed type surface 
with bounded Gaussian curvature
are real valued near the lightlike point set
(Corollary \ref{cor:umbilic}).
Then, in Section \ref{sec:monge},
we derive a Monge form of a mixed type surface
at a lightlike point of the first kind 
in the Lorentz-Minkowski $3$-space
(Proposition \ref{prop:monge}). 
Finally, in Section \ref{sec:GB},
reviewing the results by Pelletier \cite{Pelletier} and Steller \cite{Steller},
we prove Corollary \ref{cor:introD}.

\section{Mixed type surfaces}
\label{sec:prelim}

Let $(M^3,\bar{g})$ be an oriented Lorentzian $3$-manifold
with the metric $\bar{g}=\inner{~}{~}$. 
A tangent vector 
$\vect{v}\in T_pM^3$ $(p \in M^3)$
is called {\it spacelike\/}
if $\inner{\vect{v}}{\vect{v}}>0$ or $\vect{v} = \vect{0}$.
Similarly, 
if $\inner{\vect{v}}{\vect{v}}<0$
(resp.\ $\inner{\vect{v}}{\vect{v}}=0$),
$\vect{v}$ is called {\it timelike\/} 
(resp.\ {\it lightlike\/}).
Let $\{\vect{e}_1,\vect{e}_2,\vect{e}_3\}$ 
be a positively oriented orthonormal basis of $T_pM^3$,
namely, 
\begin{equation}\label{eq:ONB}
  \inner{\vect{e}_1}{\vect{e}_1}
  =\inner{\vect{e}_2}{\vect{e}_2}
  =-\inner{\vect{e}_3}{\vect{e}_3}
  =1,\quad
  \inner{\vect{e}_i}{\vect{e}_j}=0
\end{equation}
holds, where $i,j=1,2,3$ $(i\ne j)$.
For tangent vectors $\vect{v}, \vect{w} \in T_pM^3$,
the vector product $\vect{v}\times \vect{w}$
is given by
\begin{equation}\label{eq:CROSS}
  \vect{v}\times \vect{w}
  := \left| \begin{array}{cc} v_2 & w_2\\v_3 & w_3 \end{array} \right| \vect{e}_1
    + \left| \begin{array}{cc} v_3 & w_3\\v_1 & w_1 \end{array} \right| \vect{e}_2
    - \left| \begin{array}{cc} v_1 & w_1\\v_2 & w_2 \end{array} \right| \vect{e}_3,
\end{equation}
where 
$v_i:=\inner{\vect{v}}{\vect{e}_i}$, 
$w_i:=\inner{\vect{w}}{\vect{e}_i}$ 
$(i=1,2,3)$ 
are components of $\vect{v}$, $\vect{w}$
with respect to the orthonormal basis 
$\{\vect{e}_1,\vect{e}_2,\vect{e}_3\}$.
Then, it holds that
\begin{enumerate}
\item
$\vect{v}\times \vect{w}$ is orthogonal to both $\vect{v}$ and $\vect{w}$,
\item 
$  \inner{ \vect{v}\times \vect{w} }{ \vect{v}\times \vect{w} }
  = - \inner{\vect{v}}{\vect{v}} \inner{\vect{w}}{\vect{w}}
     + \inner{\vect{v}}{\vect{w}}^2.$
\end{enumerate}
For $\vect{v}\in T_pM^3$, we set 
$|\vect{v}|:= \sqrt{|\inner{\vect{v}}{\vect{v}}|}$.

In this paper, a {\it surface} in 
a Lorentzian $3$-manifold 
$M^3=(M^3,\bar{g})$
is defined to be an immersion 
$$
  f:\Sigma\longrightarrow (M^3,\bar{g})
$$
of a differentiable connected $2$-manifold $\Sigma$ into $M^3$.
The {\it first fundamental form} 
(or the {\it induced metric}) of $f$
is the smooth metric $ds^2$ on $\Sigma$
defined by
\begin{equation}\label{eq:1FF}
  (ds^2)_p(\vect{v}, \vect{w}) 
  := \inner{df_p(\vect{v})}{df_p(\vect{w})}
  \qquad
  (\vect{v}, \vect{w} \in T_p\Sigma, ~ p\in \Sigma).
\end{equation}
A point $p\in \Sigma$ is called 
a {\it spacelike\/}
(resp.\ {\it timelike\/}, {\it lightlike\/}) {\it point\/},
if $(ds^2)_p$ is a positive definite
(resp.\ indefinite, degenerate)
symmetric bilinear form on $T_p\Sigma$.
We denote by $\Sigma_+$ 
(resp.\ $\Sigma_-$, $LD$)
the set of spacelike 
(resp.\ timelike, lightlike)
points.
A surface $f:\Sigma\to M^3$ is called 
{\it spacelike} (resp.\ {\it timelike}),
if $\Sigma$ coincides with $\Sigma_+$ (resp.\ $\Sigma_-$).
If both the spacelike sets $\Sigma_+$ and 
the timelike sets $\Sigma_-$ are non-empty,
the surface is called a {\it mixed type surface\/}.
 
On a local coordinate neighborhood $(U;u,v)$,
set $f_u:=df(\partial_u)$, $f_v:=df(\partial_u)$,
where 
$\partial_u:=\partial/\partial u$, 
$\partial_v:=\partial/\partial v$.
Then,
$ds^2$ is written as 
\begin{equation}\label{eq:1st-FF}
  ds^2 = E\,du^2 +2F\,du\,dv + G\,dv^2,
\end{equation}
where 
$E:=\inner{f_u}{f_u}$,
$F:=\inner{f_u}{f_v}$ and
$G:=\inner{f_v}{f_v}$.
Setting the function $\lambda$ as
\begin{equation}\label{eq:discriminant}
\lambda := EG-F^2,
\end{equation}
a point $q\in U$ is a lightlike (resp.\ spacelike, timelike) point
if and only if 
$\lambda(q)=0$ (resp.\ $\lambda(q)>0$, $\lambda(q)<0$)
holds.
Namely, 
$U_{\pm}:=\Sigma_\pm\cap U$ are written as
$$
  U_+=\{ q\in U \,;\, \lambda(q)>0 \},\qquad
  U_-=\{ q\in U \,;\, \lambda(q)<0 \}.
$$
We call $\lambda$ the {\it discriminant function}.

\subsection{Non-degenerate lightlike points}

Let $f:\Sigma\to M^3$ be a mixed type surface.
For each point $p\in \Sigma$,
the subspace
$$
  \mathcal{N}_p:=\left\{
    \vect{v}\in T_p\Sigma\,;\,
    (ds^2)_p(\vect{v},\vect{x})=0  
    \text{~holds for any~} \vect{x}\in T_p\Sigma
  \right\}
$$
of $T_p\Sigma$ is called the {\it null space} at $p$.
Then, $\mathcal{N}_p\ne\{ \vect{0} \}$
if and only if $p$ is a lightlike point.
A nonzero element of $\mathcal{N}_p$ is called 
a {\it null vector} at $p$.
We remark that ${\rm dim}\,\mathcal{N}_p=2$ does not 
occur since the image $Q:=df_p(\mathcal{N}_p)$
is a $2$-dimensional degenerate subspace of 
the Lorentzian inner product space $T_{f(p)}M^3$.
Thus, we have
\begin{equation}\label{lem:corank1}
  \dim\mathcal{N}_p=1
\end{equation}
for each lightlike point $p\in LD$ of 
a mixed type surface $f:\Sigma\to M^3$.
Then, a smooth vector field $\eta$ 
defined on a neighborhood $U$ of $p\in LD$
is called a {\it null vector field} if
$\eta_q \in T_q\Sigma$ is a null vector at each 
$q\in LD\cap U$.

As in the introduction,
a lightlike point $p\in LD$ is called {\it non-degenerate}
if $d\lambda(p) \neq0$, where $\lambda$ is a discriminant function.
By the implicit function theorem,
there exists a regular curve
$\gamma(t)$ $(|t|<\ep)$
in $\Sigma$
such that $p=\gamma(0)$
and ${\rm Image}(\gamma)=LD$
holds in a neighborhood of $p$.
We call $\gamma(t)$ 
a {\it characteristic curve}.
For a null vector field $\eta$
defined on a neighborhood of $p$,
the restriction 
$\eta(t):=\eta_{\gamma(t)}\in T_{\gamma(t)}\Sigma$
is called a {\it null vector field along $\gamma(t)$}.

A lightlike point $p\in LD$
is said to be
a {\it type-changing point\/}
if each open neighborhood $U$ of $p$
satisfies $U\cap \Sigma_+\neq \emptyset$
and $U\cap \Sigma_-\neq \emptyset$.

\begin{lemma}
A non-degenerate lightlike point is a type-changing point.
\end{lemma}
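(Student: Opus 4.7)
The plan is to exploit the fact that the discriminant function $\lambda = EG - F^2$ is a submersion at any non-degenerate lightlike point, and that its sign distinguishes the three causal types of points.

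Work in a local coordinate neighborhood $(U;u,v)$ around $p$. By definition of non-degeneracy, $d\lambda(p) \neq 0$, so without loss of generality $\lambda_v(p) \neq 0$. The implicit function theorem then gives a smooth function $h$ with $h(0) = 0$ (assuming $p$ corresponds to the origin) such that $\lambda(u,v) = 0$ near $p$ if and only if $v = h(u)$; this also recovers the characteristic curve $\gamma$ already discussed in the excerpt. Moreover, since $\lambda_v$ has a definite sign near $p$, the function $v \mapsto \lambda(u,v)$ is strictly monotone along each vertical line through a point near the origin.

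The strict monotonicity in $v$ ensures that, in every small box $(-\delta,\delta) \times (-\delta,\delta)$ around $p$, the regions $\{\lambda > 0\}$ and $\{\lambda < 0\}$ both meet $U$ (they lie on the two sides of the graph $v = h(u)$). Since a point $q$ is spacelike if and only if $\lambda(q) > 0$ and timelike if and only if $\lambda(q) < 0$, every neighborhood of $p$ must intersect both $\Sigma_+$ and $\Sigma_-$, which is precisely the definition of a type-changing point.

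There is no real obstacle here; the argument is essentially the observation that a smooth real-valued function with nonvanishing differential is locally a submersion, hence takes values of both signs near any of its zeros. The only thing to be careful about is that the sign convention for $\lambda$ in \eqref{eq:discriminant} matches the characterization $U_+ = \{\lambda > 0\}$, $U_- = \{\lambda < 0\}$ stated immediately after, which is recorded in the excerpt.
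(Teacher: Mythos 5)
Your proof is correct and follows essentially the same route as the paper's: both arguments reduce to the observation that $\lambda$ vanishes on $LD$ with $d\lambda(p)\neq 0$, so $\lambda$ takes both signs in every neighborhood of $p$, and the sign of $\lambda$ determines the causal type. The paper simply chooses coordinates in which $LD$ is the $u$-axis instead of invoking the implicit function theorem to write $LD$ as a graph, but the content is identical.
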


\begin{proof}
Since $\gamma$ is regular,
we can take a local coordinate system $(U;u,v)$ 
around $p$
such that $LD=\{(u,0)\}$.
Then, $\lambda(u,0)=0$ yields $\lambda_u(u,0)=0$.
Since $p=(0,0)$ is non-degenerate,
we have $\lambda_v(u,0)\neq0$.
Hence, changing the orientation of $v$-axis, if necessary,
$U_+=\{(u,v)\,;\, v>0\}\ne\emptyset$ and
$U_-=\{(u,v)\,;\, v<0\}\ne\emptyset$.
In particular, $p$ is a type-changing point.
\end{proof}

Thus, characteristic curves
are also called {\it characteristic curves of type change}.

\begin{definition}\label{def:types-LPT}
Let $p\in \Sigma$ be a non-degenerate lightlike point,
$\gamma(t)$ $(|t|<\ep)$ a characteristic curve
passing through $p=\gamma(0)$,
and $\eta(t)$ a null vector field along $\gamma(t)$.
If $\gamma'(0)$ and $\eta(0)$ are linearly independent
(resp.\ linearly dependent),
we call $p$ 
the \emph{lightlike point of the first kind}
(resp.\ the \emph{lightlike point of the second kind\/}).
\footnote{%
In the case of wave fronts,
singular points of the first kind and the second kind 
were introduced in \cite{MSUY}.}%
Moreover, let $p\in LD$ be a lightlike point of the second kind.  
Then,
\begin{itemize}
\item
$p$ is said to be {\it admissible} if, 
for each open neighborhood $U$ of $p$,
the intersection $LD \cap U$ contains a lightlike point of the first kind.
\item
$p$ is said to be an \emph{$L_{\infty}$-point}
if $p$ is not admissible.
That is, there exists an open neighborhood $U$ of $p$
such that $LD \cap U$ consists of lightlike points of the second kind.
\end{itemize}
In other words,
$p$ is admissible
if there exists a sequence $\{p_n\}$ 
of lightlike points of the first kind such that 
$\lim_{n\to \infty} p_n = p$.
And, $p$ is an $L_{\infty}$-point
if there exists $\bar{\ep}>0$ such that
$\gamma(t)$ and $\eta(t)$ are linearly dependent
for all $|t|<\bar{\ep}$.
If we set
\begin{equation}\label{eq:character}
  \delta(t):= \det(\gamma'(t),\eta(t)),
\end{equation}
then $p=\gamma(0)$ is a lightlike point of the first kind
if and only if $\delta(0)\neq0$. 
For $k\geq 3$,
$p$ is called an {\it $L_k$-point\/} if 
$$
  \delta(0)= \dots = \delta^{(k-3)}(0)=0,\qquad
  \delta^{(k-2)}(0)\neq0.
$$
By definition, $L_k$-points are of the admissible second kind.
\end{definition}

\begin{remark}\label{rem:L-conelike}
We remark that lightlike points of the first kind
can be seen as `cuspidal-edge-like'.
More precisely, 
a germ of wave front $f:(\R^2,0)\to \R^3$
is locally diffeomorphic (or $\mathcal{A}$-equivalent)
to $f_{CE}:(\R^2,0)\to \R^3$
is called a cuspidal edge,
where $f_{CE}(u,v):=(u,v^2,v^3)$.
A useful criteria for recognition of the cuspidal edge
was given in \cite{KRSUY},
which implies the linear independence
of the `singular direction $\gamma'(0)$' and 
the 'null direction $\eta(0)$'
(for more details, see \cite{KRSUY}).
Hence, a lightlike point of the first kind 
has a singularity type
similar to the cuspidal edge.
Similarly, an $L_3$-point has a singularity type
similar to the swallowtail.
On the other hand,
a germ of wave front $f:(\R^2,0)\to \R^3$
is called a conelike singularity
if it is locally diffeomorphic to $f_C:(\R^2,0)\to \R^3$
defined by $f_C(u,v):=(v\cos u,v\sin u,v)$.
For a conelike singular point,
the singular direction $\gamma'(t)$ and 
the null direction $\eta(t)$ 
are linearly dependent
along the singular curve $\gamma(t)$.
Thus, an $L_{\infty}$-point has a singularity type
similar to the conelike singularity.
\end{remark}

Set the regular curve $\hat{\gamma}(t)$ in $M^3$ 
given by the image $\hat{\gamma}(t):=f(\gamma(t))$ 
of the characteristic curve $\gamma(t)$ through $f$.
We remark that
$p=\gamma(0)$ is of the first kind (resp.\ second kind)
if and only if
$\hat{\gamma}'(0)$ is spacelike (resp.\ lightlike).
If $p$ is of the first kind,
then there exists $\bar{\ep}>0$ such that,
for any $t\in (-\bar{\ep},\bar{\ep})$,
$\gamma(t)$ is also a lightlike point of the first kind.
Since $\gamma'(t)$ and $\eta(t)$ are linearly independent 
for each $t$, we have that
\begin{equation}\label{eq:image-is-spacelike}
  \hat{\gamma}(t)=f(\gamma(t))~ (|t|<\bar{\ep})~ 
  \text{is a spacelike regular curve in} ~M^3.
\end{equation}
On the other hand, $p$ is an $L_{\infty}$-point if and only if
there exists $\bar{\ep}>0$ such that
\begin{equation}\label{eq:image-is-lightlike}
  \hat{\gamma}(t)=f(\gamma(t))~ (|t|<\bar{\ep})~ 
  \text{is a lightlike regular curve in} ~M^3.
\end{equation}

In our terminology, 
\cite[Proposition 3.5]{HKKUY}
can be interpreted as follows.

\begin{fact}[{\cite[Proposition 3.5]{HKKUY}, \cite{UY_2018}}]
\label{fact:HKKUY}
Let $f:\Sigma\to M^3$ be a mixed type surface,
and let $p\in \Sigma$ be a non-degenerate lightlike point.
If the mean curvature $H$ is bounded on a neighborhood of $p$,
then $p$ must be an $L_{\infty}$-point.
\end{fact}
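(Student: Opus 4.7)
The plan is to argue by contrapositive: assuming $p$ is not an $L_\infty$-point, show that the mean curvature $H$ is unbounded on every neighborhood of $p$. By Definition \ref{def:types-LPT}, if $p$ is not an $L_\infty$-point, then every neighborhood of $p$ in $LD$ contains lightlike points of the first kind, so it suffices to prove that $H$ is unbounded near an arbitrary lightlike point $q$ of the first kind; the conclusion at $p$ then follows by taking such $q$ arbitrarily close to $p$.

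Near such a first-kind point $q$, choose a coordinate chart $(u,v)$ with $LD\cap U=\{v=0\}$ and $\partial_v|_{LD}$ a null vector field along $LD$; this is possible because at a first-kind point the characteristic direction and the null direction are linearly independent. In these coordinates $F(u,0)=G(u,0)=0$, so $F=vF_1$ and $G=vG_1$ for smooth $F_1,G_1$, and $\lambda=v(EG_1-vF_1^2)$. Non-degeneracy gives $\lambda_v(u,0)=E(u,0)G_1(u,0)\neq 0$, and the first-kind hypothesis translates to $E(q)=\inner{\hat\gamma'(q)}{\hat\gamma'(q)}>0$; hence both $E$ and $G_1$ are nonzero along $LD$ near $q$. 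Using the cross product of \eqref{eq:CROSS}, the mean curvature off $LD$ takes the form
$$
H=\frac{E\hat N-2F\hat M+G\hat L}{2\lambda\sqrt{|\lambda|}}
$$
(up to a sign depending on the causal character), where $\hat L=\inner{f_{uu}}{f_u\times f_v}$, $\hat M=\inner{f_{uv}}{f_u\times f_v}$, $\hat N=\inner{f_{vv}}{f_u\times f_v}$ are smooth on $\Sigma$. Since $|\lambda|^{3/2}\sim|v|^{3/2}$ near $LD$, boundedness of $H$ forces the smooth numerator $\Phi:=E\hat N-2F\hat M+G\hat L$ to vanish on $LD$, and $F(u,0)=G(u,0)=0$ reduces this to $\Phi(u,0)=E(u,0)\hat N(u,0)=0$.

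The crux is a structural fact from three-dimensional Lorentzian linear algebra: along $LD$ the vector $f_u\times f_v$ is lightlike (its squared length equals $-\lambda=0$), orthogonal to the null vector $f_v$, and nonzero because $f$ is an immersion; any lightlike vector orthogonal to a given nonzero lightlike vector in a Lorentzian $3$-space is a scalar multiple of it, so $f_u\times f_v=\mu\,f_v$ on $LD$ with $\mu\neq 0$. Consequently
$$
\hat N(u,0)=\mu\inner{f_{vv}}{f_v}=\tfrac{\mu}{2}G_v(u,0)=\tfrac{\mu}{2}G_1(u,0)\neq 0
$$
by non-degeneracy, and combined with $E(q)>0$ this contradicts $\Phi(q)=0$. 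Therefore $|H|\sim|\Phi|/|\lambda|^{3/2}\to\infty$ as $v\to 0$, giving the desired contradiction. I expect the identification $f_u\times f_v\parallel f_v$ on $LD$ to be the only real obstacle; once it is in place, only the zeroth-order vanishing of $\Phi$ is needed, and no higher-order expansion in $v$ is required.
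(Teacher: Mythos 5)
Your argument is correct. Note first that the paper does not prove this statement at all: it is imported as Fact \ref{fact:HKKUY} with a citation to \cite{HKKUY} and \cite{UY_2018}, so there is no internal proof to compare against. Your proof is a legitimate self-contained derivation, and it is consistent with the machinery the paper builds for the Gaussian-curvature analogue: the reduction to first-kind points via admissibility is exactly the openness of the first-kind condition used throughout Section \ref{sec:2nd-kind}; the identity $f_u\times f_v=\pm\sqrt{E}\,f_v$ along $LD$ that you isolate as ``the only real obstacle'' is precisely the paper's Lemma \ref{lem:gaiseki} (and its consequence Lemma \ref{lem:psi}); and the step ``boundedness forces the smooth numerator to vanish on $LD$, but the numerator restricts on $\{v=0\}$ to $E\cdot\mu\cdot\tfrac12 G_v\neq0$'' mirrors the computation of $\hat K(0,v)=\tfrac14 E\hat G^2>0$ in the $L_\infty$ case of the proof of Theorem \ref{thm:introB}, with the roles of $K$ and $H$ (and of the first and second kinds) interchanged. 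All the supporting claims check out: $G_v(u,0)\neq0$ follows from non-degeneracy since $\lambda_v(u,0)=E(u,0)G_v(u,0)$, the vector $f_u\times f_v$ is nonzero lightlike on $LD$ because $\inner{f_u\times f_v}{f_u\times f_v}=-\lambda$, and two orthogonal nonzero lightlike vectors in a Lorentzian $3$-space are indeed parallel. The only cosmetic point is that in a general Lorentzian $3$-manifold you should write $\nabla_v f_v$ rather than $f_{vv}$; the identity $\inner{\nabla_v f_v}{f_v}=\tfrac12 G_v$ still holds because $\nabla$ is metric, so nothing in the argument changes.
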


For the case of the Gaussian curvature,
see Theorem \ref{thm:introB}.
As an immediate consequence of this fact,
the mean curvature $H$ cannot be bounded
near a lightlike point of the first kind.

\begin{example}\label{ex:P-curved1}
We denote by $\R^3_1$ the Lorentz-Minkowski $3$-space
with the standard Lorentz metric
$
  \inner{\vect{x}}{\vect{x}} 
  =  x^2 + y^2 - z^2,
$
where $\vect{x}=(x,y,z)\in \R^3_1$.
Let $S^2$ be the unit sphere 
in the Euclidean $3$-space $\R^3$,
that is,
$S^2:=\{ (x,y,z)\in \R^3 \,;\, x^2+y^2+z^2=1\}$
(cf.\ Figure \ref{fig:S2}).
If we regard $S^2$ as a surface in $\R^3_1$,
$S^2$ is a mixed type surface.
Let us take the parametrization of $S^2$
as
\begin{equation}\label{eq:param-S2}
f(u,v)=
(\sin u \cos v,\cos u \cos v, \sin v)
\qquad
(u\in S^1,\,|v|<\pi/2),
\end{equation}
where $S^1:=\R/2\pi\Z$.
Since the induced metric $ds^2$ is written as
$
  ds^2=(\cos^2v)\,du^2 + (-\cos 2 v)\,dv^2,
$
we have
$\lambda=-\cos ^2v \cos 2 v$.
Hence, the lightlike point set is given by
$
LD=\{(u,v)\,;\, \cos 2v=0\} = \{(u,v)\,;\, v=\pm\pi/4\},
$
and every lightlike point of $S^2$ is non-degenerate.
Then, $\gamma(u)=(u,\pm\pi/4)$ is a characteristic curve.
Moreover, since $G=0$ on $\gamma$,
$\eta:=\partial_v$ gives a null vector field.
Therefore, every lightlike point of $S^2$ is of the first kind.
\end{example}

\begin{figure}[htb]
\begin{center}
  \resizebox{4cm}{!}{\includegraphics{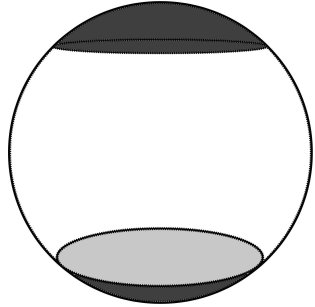}} 
\end{center}
\caption{
Regarding the unit sphere $S^2$ as a surface in $\R^3_1$,
both the 45th parallel north and south
consist of the lightlike points of the first kind.
The colored two domains are the set of spacelike points
and the other non-colored one is the set of timelike points.}
\label{fig:S2}
\end{figure}

In the rest of this section,
we give a characterization of 
lightlike points of the first kind.

\begin{proposition}\label{prop:type1}
Let $f:\Sigma\to M^3$ be a mixed type surface
and take a lightlike point $p\in LD$.
On a local coordinate neighborhood $(U;u,v)$ of $p$,
let $\lambda $ be the discriminant function
$\lambda = EG-F^2$.
Take a null vector field $\eta$ on a neighborhood of $p\in LD$.
Then, the following conditions 
are equivalent{\rm :}
\begin{itemize}
\item[$(P_1)$] $p$ is a lightlike point of the first kind,
\item[$(P_2)$] $\eta \lambda(p)\neq0$,
\item[$(P_3)$] $\eta_p \inner{\eta}{\eta}\neq0$,
\end{itemize}
where we set 
$\inner{\eta}{\eta}
:=ds^2(\eta,\eta)
=\inner{\eta f}{\eta f}$.
\end{proposition}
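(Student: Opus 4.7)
My plan is to establish the two equivalences $(P_1) \Leftrightarrow (P_2)$ and $(P_2) \Leftrightarrow (P_3)$ separately.

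For $(P_1) \Leftrightarrow (P_2)$, the key point is that the characteristic curve $\gamma$ parametrizes the level set $LD=\{\lambda=0\}$, so $\gamma'(0)$ always lies in $\ker d\lambda_p$ whenever $d\lambda_p\ne 0$. Hence $\gamma'(0)$ and $\eta_p$ are linearly independent if and only if $\eta_p\notin\ker d\lambda_p$, i.e.\ $\eta\lambda(p)\ne 0$. In particular, $(P_2)$ automatically implies the non-degeneracy $d\lambda_p\ne 0$ that is required for the characteristic curve (and hence for $(P_1)$) to exist, so both directions follow.

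For $(P_2)\Leftrightarrow (P_3)$, I would argue by direct computation. Writing $\eta=a\,\partial_u+b\,\partial_v$ with $(a(p),b(p))$ a null direction, the defining relations of the null space read $aE+bF=0$ and $aF+bG=0$ at $p$. Differentiating $\inner{\eta}{\eta}=a^2E+2abF+b^2G$ along $\eta$ and collecting, the terms involving $\eta a$ and $\eta b$ are multiplied by the null-direction relations and hence vanish at $p$, giving
$$
  \eta_p\inner{\eta}{\eta}=a(p)^2\,\eta E+2a(p)b(p)\,\eta F+b(p)^2\,\eta G \quad\text{at } p,
$$
while expanding $\lambda=EG-F^2$ directly yields
$$
  \eta_p\lambda=G\,\eta E-2F\,\eta F+E\,\eta G\quad\text{at } p.
$$
Using the null-direction relations to substitute for two of $E(p),F(p),G(p)$, I expect the identity $\eta_p\lambda=c\,\eta_p\inner{\eta}{\eta}$ to emerge, with $c=G(p)$ when $a(p)\neq 0$ and (symmetrically) $c=E(p)/b(p)^2$ when $b(p)\neq 0$.

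The main obstacle is verifying that this proportionality constant $c$ is nonzero. When $a(p)\neq 0$, the null-direction relations force $F(p)=-b(p)G(p)/a(p)$ and $E(p)=b(p)^2G(p)/a(p)^2$, so $G(p)=0$ would imply $E(p)=F(p)=0$ as well; but then $df_p(T_p\Sigma)$ would be a totally null $2$-plane inside $T_{f(p)}M^3$, contradicting the fact that a Lorentzian $3$-space admits no totally null subspace of dimension greater than $1$ together with $f$ being an immersion. The case $b(p)\neq 0$ is handled symmetrically, and together the two cases cover all possibilities since $(a(p),b(p))\neq(0,0)$. This establishes $(P_2) \Leftrightarrow (P_3)$ and completes the chain of equivalences.
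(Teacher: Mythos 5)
Your proof is correct. The equivalence $(P_1)\Leftrightarrow(P_2)$ is handled exactly as in the paper (the paper phrases ``$\gamma'(0)$ spans $\ker d\lambda_p$'' as Euclidean perpendicularity of $\gamma'$ to $(\lambda_u,\lambda_v)$, but the content is identical, including the observation that each of $(P_1)$, $(P_2)$ forces non-degeneracy). For $(P_2)\Leftrightarrow(P_3)$ you take a genuinely different route: you compute directly in an arbitrary coordinate system with an arbitrary null extension, using the relations $aE+bF=aF+bG=0$ at $p$ to kill the $\eta a$, $\eta b$ terms and to exhibit $\eta_p\lambda$ as a nonzero multiple of $\eta_p\inner{\eta}{\eta}$, with the non-vanishing of the multiple reduced to the non-existence of a totally degenerate $2$-plane in a Lorentzian $3$-space (which is the paper's own justification of $\dim\mathcal{N}_p=1$ in \eqref{lem:corank1}). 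The paper instead first proves an invariance lemma (Lemma \ref{lem:type1}) showing $(P_2)$ and $(P_3)$ are independent of the choices, then passes to coordinates with $\partial_v\parallel\eta$, where $F(p)=G(p)=0$, $E(p)>0$ and the identity $\eta\lambda(p)=\alpha(p)^2E(p)\,\eta_p\inner{\eta}{\eta}$ is immediate. Your version is more self-contained for the equivalence itself (no coordinate change, no separate invariance lemma needed; indeed the well-definedness of $(P_2)$, $(P_3)$ then follows a posteriori from their equivalence with the choice-free condition $(P_1)$), at the cost of a slightly longer case analysis; the paper's version isolates the invariance as a reusable lemma. One cosmetic slip: in the case $a(p)\neq0$ the proportionality constant is $G(p)/a(p)^2$ rather than $G(p)$ (your second case correctly carries the factor $1/b(p)^2$); since $a(p)\neq0$ this does not affect the non-vanishing argument.
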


For the proof of this proposition,
we prepare the following lemma:

\begin{lemma}\label{lem:type1}
The conditions $(P_2)$ and $(P_3)$
in Proposition $\ref{prop:type1}$
do not depend on 
the choices of a null vector field $\eta$
and a local coordinate neighborhood $(U;u,v)$ of 
a lightlike point $p\in LD$.
\end{lemma}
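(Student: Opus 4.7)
The plan is to verify independence separately with respect to the choice of null vector field and with respect to the coordinate chart. For the first, let $\tilde\eta$ be another null vector field on a neighborhood of $p$; pick any smooth vector field $Z$ on a smaller neighborhood with $\eta_p$ and $Z_p$ linearly independent, so that one may write $\tilde\eta = \phi\,\eta + \mu\,Z$ for unique smooth functions $\phi,\mu$. The structural fact that makes everything work is that, by \eqref{lem:corank1}, the null space $\mathcal{N}_q$ is one-dimensional at every $q\in LD$; since $\tilde\eta_q,\eta_q\in\mathcal{N}_q$ while $Z_q$ is transverse to $\mathcal{N}_q$, one forces $\mu\equiv 0$ on $LD$ near $p$, and $\phi(p)\ne 0$ because $\tilde\eta_p$ spans $\mathcal{N}_p$. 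With this in hand, $(P_2)$ is immediate: using $\mu(p)=\lambda(p)=0$ and Leibniz, $\tilde\eta\lambda(p)=\phi(p)\,\eta\lambda(p)$, so the nonvanishing is preserved.

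For $(P_3)$ under the same change of null vector field, I would expand
\[
  \inner{\tilde\eta}{\tilde\eta} = \phi^{2}\inner{\eta}{\eta} + 2\phi\mu\inner{\eta}{Z} + \mu^{2}\inner{Z}{Z}
\]
and apply the derivation $\tilde\eta_p$. Every term produced by the Leibniz rule carries, when evaluated at $p$, at least one of three killing factors: $\mu(p)=0$; $\inner{\eta_p}{\eta_p}=0$; or $\inner{\eta_p}{Z_p}=0$. The third of these uses not a mere null condition but the stronger fact that $\eta_p\in\mathcal{N}_p$ annihilates \emph{every} vector of $T_p\Sigma$, since $\mathcal{N}_p$ is the radical of $(ds^2)_p$. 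After these cancellations only $\phi(p)^{3}\,\eta_p\inner{\eta}{\eta}$ survives, so $\eta_p\inner{\eta}{\eta}\ne 0$ is preserved when $\eta$ is replaced by $\tilde\eta$.

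Independence with respect to coordinates is easier. The scalar $\inner{\eta}{\eta}=ds^{2}(\eta,\eta)$ is defined intrinsically and $\eta$ is a vector field rather than a symbol in a chart, so $(P_3)$ is manifestly coordinate-free. For $(P_2)$, under a change $(u,v)\mapsto(\tilde u,\tilde v)$ with Jacobian determinant $J$, the discriminant transforms as $\tilde\lambda=J^{2}\lambda$, and since $\lambda(p)=0$ one obtains $\eta\tilde\lambda(p)=J(p)^{2}\,\eta\lambda(p)$; as $J(p)\ne 0$ this finishes the lemma. The only step demanding care is the $(P_3)$ computation above — specifically the observation that $\inner{\eta_p}{Z_p}=0$ for \emph{arbitrary} $Z_p$, which is what makes the cross term disappear and causes the transition factor to come out cleanly as $\phi(p)^{3}$ rather than something involving derivatives of $\phi$ or $\mu$.
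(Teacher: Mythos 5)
Your proof is correct and follows essentially the same route as the paper's: decompose the new null vector field as $\phi\,\eta+\mu\,Z$ with $\mu$ vanishing on $LD$ and $\phi(p)\neq 0$, kill the cross terms using that $\eta_p$ lies in the radical of $(ds^2)_p$, and obtain the transition factors $\phi(p)$ for $(P_2)$ and $\phi(p)^3$ for $(P_3)$, together with $\tilde\lambda=J^2\lambda$ for the coordinate change. The paper's argument is identical in substance (it writes $\bar\eta=a\,\xi+b\,\eta$ with $a(p)=0$, $b(p)\neq0$ and arrives at the same $b(p)^3$ factor), so no further comment is needed.
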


\begin{proof}
Clearly, the condition $(P_2)$
is independent on the choice of $\eta$.
Take coordinate neighborhoods $(U;u,v)$, $(V;x,y)$
so that $U\cap V\ne\emptyset$. Then
\begin{align}
\label{eq:Exy}
  \hat{E} &= E u_x^2  +2F u_x v_x +G v_x^2,\\
\label{eq:Fxy}
  \hat{F} &= E u_x u_y  +F (u_x v_y+u_y v_x) +G v_xv_y,\\
\label{eq:Gxy}
  \hat{G} &= E u_y^2  +2F u_y v_y +G v_y^2
\end{align}
hold, where  
\begin{equation}\label{eq:1FF-hat}
  ds^2 = \hat{E}\, dx^2 + 2\hat{F}\, dx\,dy + \hat{G}\,dy^2.
\end{equation}
By a direct calculation, we have
$\hat{\lambda} = J^2 \lambda$,
where $\hat{\lambda} := \hat{E} \hat{G} - \hat{F}^2$
and $J:=u_x v_y - u_y v_x$.
Hence, at a lightlike point $p\in LD$,
we have
$
 \eta\hat{\lambda}(p) = J(p)^2 \eta\lambda(p),
$
which implies the condition $(P_2)$
does not depend on 
the choice of a local coordinate neighborhood 
$(U;u,v)$ of $p\in LD$.

With respect to the condition $(P_3)$,
let $\eta$, $\bar{\eta}$ be smooth vector fields
defined on a neighborhood $U$ of $p$
such that $\eta_p, \bar{\eta}_p \in \mathcal{N}_p$.
Then, there exist smooth functions $a$, $b$
and a smooth vector field $\xi$ on $U$ such that
$\xi$, $\eta$ are linearly independent,
$a(p)=0$, $b(p)\ne0$,
and
$
  \bar{\eta} = a\,\xi+b\,\eta.
$
Since
\begin{multline*}
 \bar{\eta}\inner{\bar{\eta}}{\bar{\eta}} 
 = \bar{\eta}(a^2)\inner{\xi}{\xi} + a^2\bar{\eta}\inner{\xi}{\xi}\\
   +2 \bar{\eta}(ab)\inner{\xi}{\eta} + 2ab\bar{\eta}\inner{\xi}{\eta}
   +\bar{\eta}(b^2)\inner{\eta}{\eta} + b^2\bar{\eta}\inner{\eta}{\eta},
\end{multline*}
we have
$
 \bar{\eta}_p\inner{\bar{\eta}}{\bar{\eta}}
 = b(p)^3 \eta_p\inner{\eta}{\eta}.
$
Hence, the condition $(P_3)$
is independent on the choice of $\eta$.
\end{proof}

\begin{proof}[Proof of Proposition \ref{prop:type1}]
First, we prove that the condition $(P_1)$ 
is equivalent to $(P_2)$.
Since both the conditions $(P_1)$ and $(P_2)$
implies that the lightlike point $p$ is non-degenerate,
we may assume that $p$ is a non-degenerate lightlike point.
Then,
$(\lambda_u,\lambda_v)\neq(0,0)$ holds at $p$. 
Let $\gamma(t)=(u(t),v(t))$ ($|t|<\ep$) be a 
characteristic curve passing through $p=\gamma(0)$ 
and $\eta=a(u,v)\partial_u+b(u,v)\partial_v$ 
a null vector field, 
where $a,b:U\to\R$ are smooth functions. 
Then $\lambda(\gamma(t))=0$ holds, and hence 
$$\left(\dfrac{d}{dt}\lambda(\gamma(t))=\right)\lambda(\gamma(t))'
=\lambda_u(\gamma(t))u'(t)+\lambda_v(\gamma(t))v'(t)=0$$
holds on $\gamma(t)$. 
Thus $\gamma'(t)=(u'(t),v'(t))$ is perpendicular to 
$(\lambda_u,\lambda_v)$ along $\gamma(t)$ 
in the sense of the standard Euclidean inner product of $\R^2$. 
On the other hand, $p$ is of the first kind if and only if 
$\det(\gamma',\eta)(0)\neq0$. 
Thus $p$ is a lightlike point of the first kind 
if and only if $\eta$ is not perpendicular to 
$(\lambda_u,\lambda_v)$ at $p$, 
namely, 
$\eta \lambda = a\lambda_u+b\lambda_v\neq0$
holds at $p$. 

Next, we prove the condition $(P_2)$ 
is equivalent to $(P_3)$.
By Lemma \ref{lem:type1},
we may take a coordinate neighborhood $(U;u,v)$ of $p$
such that $\partial_v$ is proportional to $\eta$.
Then, there exists a non-vanishing smooth function $\alpha(u,v)$
such that $\partial_v=\alpha\,\eta$.
Since $\partial_v$ is null at $p$,
we have $F(p)=G(p)=0$.
We remark that $E(p)>0$ holds (cf.\ \eqref{lem:corank1}).
By a direct calculation, we have
$
  \eta\lambda(p) = \alpha(p)^2 E(p) \eta_p\inner{\eta}{\eta},
$
which implies the desired result.
\end{proof}

\section{Invariants of lightlike points of the first kind}
\label{sec:invariant}

Let $f:\Sigma\to M^3$ be a mixed type surface.
A function $I : \Sigma\to \R$, or $I : LD\to \R$,
is called an {\it invariant},
that is, $I$ does not depend on the choice of 
coordinate system of the source.
An invariant $I : \Sigma\to \R$, or $I : LD\to \R$,
is {\it intrinsic}
if it can be expressed in terms of 
the first fundamental form $ds^2$.
An invariant $I : \Sigma\to \R$, or $I : LD\to \R$,
is {\it extrinsic}
if there exists a mixed type surface $\tilde{f}$ 
such that the first fundamental form of $\tilde{f}$
is the same as $f$,
but $I$ does not coincide.
In this section, we introduce several
invariants along the lightlike points of the first kind.

\subsection{Frames along lightlike points of the first kind}

Let $f:\Sigma\to M^3$ be a surface and 
$p\in \Sigma$ a lightlike point of the first kind.
On a coordinate neighborhood $(U;u,v)$ of $p$,
set $\lambda=EG-F^2$ as a discriminant function 
(cf.\ \eqref{eq:discriminant}).
Take a characteristic curve $\gamma(t)$ $(|t|<\ep)$
passing through $p=\gamma(0)$,
and a null vector field $\eta(t)$ along $\gamma(t)$.
If we take sufficiently small $\ep>0$,
we may assume
that $\gamma(t)$ consists of lightlike points of the first kind
(cf.\ \eqref{eq:image-is-spacelike}).
By \eqref{eq:image-is-spacelike},
\begin{equation}\label{eq:tangent}
  \vect{e}(t):= \frac{1}{ |\hat{\gamma}'(t)| } \hat{\gamma}'(t)
\end{equation}
is a spacelike tangent vector field of unit length.
Set the vector field $L(t)$ of $M^3$
along $\hat{\gamma}(t)$
as $L(t) := df(\eta(t))$.
Then, there exists a vector field $N(t)$ of $M^3$
along $\hat{\gamma}(t)$
such that
\begin{equation}\label{eq:L-normal}
  \inner{N(t)}{\vect{e}(t)}=0,\quad
  \inner{N(t)}{N(t)}=0,\quad
  \inner{N(t)}{L(t)}=1.
\end{equation}
We remark that 
such a vector field $N(t)$ is uniquely determined
by $\vect{e}(t)$ and $L(t)$.

\begin{lemma}\label{lem:normalized-null}
There exists a null vector field $\eta$
such that 
$\eta\inner{\eta f}{\eta f}=1$
holds along the characteristic curve.
{\rm (}We call such an $\eta$ 
a \emph{normalized null vector field}.{\rm )}
\end{lemma}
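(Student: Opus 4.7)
The plan is to take any smooth null vector field $\eta_0$ on a neighborhood $U$ of $p$ and to rescale it by a smooth non-vanishing function $c$, setting $\eta := c\,\eta_0$. The task then reduces to choosing $c$ so that the normalization $\eta\inner{\eta f}{\eta f}=1$ holds along the characteristic curve $\gamma$. Note that $\eta = c\eta_0$ is automatically a null vector field on $U$, since being null on $LD\cap U$ is preserved under scalar multiplication.

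Since $p$ is a lightlike point of the first kind, condition $(P_3)$ of Proposition \ref{prop:type1} ensures that the smooth function
\[
g := \eta_0\inner{\eta_0 f}{\eta_0 f}
\]
does not vanish at $p$. By continuity, shrinking $U$ if necessary, we may assume $g$ has constant nonzero sign on $U$.

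Next I would compute how $\eta\inner{\eta f}{\eta f}$ transforms under rescaling. From $df(\eta)=c\,df(\eta_0)$ we have $\inner{\eta f}{\eta f}=c^{2}\inner{\eta_0 f}{\eta_0 f}$, and the Leibniz rule applied to the derivation $\eta = c\eta_0$ gives
\[
\eta\inner{\eta f}{\eta f}
= 2c^{2}(\eta_{0}c)\inner{\eta_0 f}{\eta_0 f} + c^{3}\,\eta_{0}\inner{\eta_0 f}{\eta_0 f}.
\]
Along $\gamma$ the first summand vanishes, since $\inner{\eta_0 f}{\eta_0 f}\equiv 0$ on the lightlike locus, and the restriction simplifies to $c^{3}g$ on $\gamma$.

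It therefore suffices to take $c := g^{-1/3}$, using the real cube root; this is smooth and non-vanishing on $U$ precisely because $g$ has constant nonzero sign. The vector field $\eta := c\,\eta_0$ then has the required property. The argument contains no serious obstacle; the one point requiring care is the smoothness of the cube root, which is guaranteed by the constant-sign property of $g$ established from $(P_3)$.
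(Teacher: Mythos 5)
Your proof is correct and follows essentially the same route as the paper: rescale an arbitrary null vector field by $\alpha=(\eta\inner{\eta f}{\eta f})^{-1/3}$, noting that the extra Leibniz term vanishes along $LD$ because $\inner{\eta f}{\eta f}$ is identically zero there, and that Proposition \ref{prop:type1} guarantees the quantity being cube-rooted is non-vanishing. The only cosmetic difference is your extra remark about the constant sign of $g$; smoothness of the real cube root only requires $g\neq 0$, which is exactly what the paper uses.
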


\begin{proof}
By Proposition \ref{prop:type1},
$\eta\inner{\eta f}{\eta f}$ does not vanish 
along the characteristic curve.
For any non-vanishing function $\alpha$,
$\bar{\eta}:=\alpha\,\eta$ 
is also a null vector field.
Differentiating 
$\inner{\bar{\eta}f}{\bar{\eta}f}
=\alpha^2\inner{\eta f}{\eta f}$,
we have
\begin{equation}\label{eq:vf-change}
  \bar{\eta}\inner{\bar{\eta}f}{\bar{\eta}f}
  =\alpha\,\eta(\alpha^2)\inner{\eta f}{\eta f}
  + \alpha^3\eta\inner{\eta f}{\eta f}.
\end{equation}
Setting
$\alpha:= (\eta\inner{\eta f}{\eta f})^{-1/3}$,
we obtain
$\bar{\eta}\inner{\bar{\eta}f}{\bar{\eta}f}|_{LD}=1$,
that is, $\bar{\eta}$ is the desired null vector field.
\end{proof}

A characteristic curve $\gamma(t)$ 
is said to be {\it parametrized by arclength} if
$|\hat{\gamma}'(t)|=1$ holds.
Then we have $\vect{e}(t)=\hat{\gamma}'(t)$.

\begin{definition}\label{def:invariants}
Let $p\in LD$ a lightlike point of the first kind.
Take a characteristic curve $\gamma(t)$ $(|t|<\ep)$
parametrized by arclength
such that $p=\gamma(0)$.
Also let $\eta$ be a normalized null vector field near $p$.
Set
\begin{align}
\label{eq:Lsingular-curvature-p}
  \kappa_L(p) 
  &:= \left.\inner{\nabla_{t}\vect{e}(t) }{ L(t) }\right|_{t=0},\\
\label{eq:Lnormal-curvature-p}
  \kappa_N(p) &:= \left.\inner{ \nabla_{t}\vect{e}(t) }{ N(t) }\right|_{t=0},\\ 
\label{eq:Lgeodesic-torsion-p}
  \kappa_G(p) &:= \left.\inner{L(t)}{\nabla_{t}N(t)}\right|_{t=0},
\end{align}
where $\nabla_t:=\nabla_{d/dt}$.
We call $\kappa_L(p)$
the {\it lightlike singular curvature} (or {\it L-singular curvature}),
$\kappa_N(p)$
the {\it lightlike normal curvature} (or {\it L-normal curvature}),
and
$\kappa_G(p)$
the {\it lightlike geodesic torsion} (or {\it L-geodesic torsion}) at $p$.
\end{definition}

For a characteristic curve $\gamma(t)$
which consists of lightlike points of the first kind,
we also describe the lightlike singular curvature $\kappa_L(\gamma(t))$
(resp.\ the lightlike normal curvature $\kappa_N(\gamma(t))$, 
the lightlike geodesic torsion $\kappa_G(\gamma(t))$)
along $\gamma(t)$,
as $\kappa_L(t)$
(resp.\ $\kappa_N(t)$, $\kappa_G(t)$),
unless otherwise noted.
It can be easily seen that 
the definitions of
$\kappa_L$, $\kappa_N$ and $\kappa_G$
are independent of the choice of the arclength parameter $t$.
Similarly, they are 
independent of the choice of
the normalized null vector field $\eta$,
which can be verified by the following.

\begin{lemma}
If both $\eta$ and $\bar{\eta}$
are normalized null vector field,
then $\eta|_{LD}=\bar{\eta}|_{LD}$ holds.
\end{lemma}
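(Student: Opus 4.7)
The plan is to reduce the claim to the computation already carried out in the proof of Lemma \ref{lem:type1}. Since the null space $\mathcal{N}_q$ at each lightlike point $q\in LD$ is one-dimensional by \eqref{lem:corank1}, the two null vector fields $\eta$ and $\bar{\eta}$ must be proportional on $LD$. Thus, after possibly shrinking the neighborhood and choosing a smooth vector field $\xi$ on $U$ transverse to $\eta$, I can write
\[
\bar{\eta} = a\,\xi + b\,\eta
\]
where $a,b$ are smooth functions with $a|_{LD}=0$ and $b(p)\neq 0$.

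Next, I would apply to $f$ the same expansion that appeared in the proof of Lemma \ref{lem:type1}, namely expanding
\[
\bar{\eta}\inner{\bar{\eta}f}{\bar{\eta}f}
\]
by the Leibniz rule. At any point of $LD$, all terms in which either a factor of $a$ or a factor of $\inner{\eta f}{\eta f}$ remains undifferentiated vanish (since $a|_{LD}=0$ and $\inner{\eta f}{\eta f}|_{LD}=0$, and the derivatives $\bar{\eta}(a^2)$, $\bar{\eta}(ab)$ on $LD$ are killed by the remaining factor $\inner{\xi}{\xi}$ or $\inner{\xi}{\eta}$ vanishing when paired appropriately—more cleanly, exactly the calculation in Lemma \ref{lem:type1} gives)
\[
\bar{\eta}\inner{\bar{\eta}f}{\bar{\eta}f}\bigr|_{LD}
= b^3\,\eta\inner{\eta f}{\eta f}\bigr|_{LD}.
\]

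Now the normalization of both vector fields forces the left- and right-hand sides to equal $1$ on $LD$. Hence $b^3|_{LD}=1$, so $b|_{LD}=1$, and therefore
\[
\bar{\eta}|_{LD} = a|_{LD}\,\xi + b|_{LD}\,\eta = \eta|_{LD},
\]
which is the claimed equality. The only subtle point is recognizing that the computation performed for an abstract symmetric bilinear form in Lemma \ref{lem:type1} applies verbatim to $\inner{\cdot\, f}{\cdot\, f}=ds^2(\cdot,\cdot)$; no new calculation is required.
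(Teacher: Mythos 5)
Your argument is correct and follows essentially the same route as the paper: the paper also writes $\bar{\eta}=\alpha\,\eta+\beta\,\xi$ with $\beta|_{LD}=0$ for an auxiliary frame field $\xi$, expands $\bar{\eta}\inner{\bar{\eta}f}{\bar{\eta}f}$ by the Leibniz rule, kills the extra terms on $LD$ using $\beta|_{LD}=0$ and the nullity of $\eta$ along $LD$, and concludes $\alpha^3|_{LD}=1$ from the normalization. Your reduction to the computation in Lemma \ref{lem:type1} is legitimate (it applies pointwise along $LD$ because the first fundamental form is exactly the bilinear form used there), so no gap remains.
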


\begin{proof}
Let $\xi$ be a non-zero smooth vector field 
such that $\xi|_{LD}$ is tangent to $LD$.
Then, there exist smooth functions $\alpha$, $\beta$
such that $\alpha\ne0$, $\beta|_{LD}=0$ and
$\bar{\eta}=\alpha\,\eta + \beta\,\xi$.
It suffices to show that
$\alpha|_{LD}=1$ holds.
Since
\begin{align*}
  \bar{\eta}\inner{\bar{\eta} f}{\bar{\eta} f}
  &= \alpha\left( 
       \eta (\alpha^2) \inner{\eta f}{\eta f}
       +\alpha^2 \eta\inner{\eta f}{\eta f} 
       + 2\beta \eta(\alpha \inner{\eta f}{\xi f}) \right.\\
  &{}\hspace{4mm}     
       \left.+ 2\eta(\beta) \alpha \inner{\eta f}{\xi f}
       + 2\beta \eta(\beta) \inner{\xi f}{\xi f}
       + \beta^2 \eta(\inner{\xi f}{\xi f}) \right) 
        + \beta\,\xi (\inner{\bar{\eta} f}{\bar{\eta} f})
\end{align*}
holds, and evaluating this on $LD$,
we have $\alpha|_{LD}=1$.
\end{proof}

By a standard method, we have the following:

\begin{proposition}\label{prop:Frenet}
Let $\gamma(t)$ be the characteristic curve
consisting of lightlike points of the first kind
with the arclength parameter $t$,
and $\eta$ a normalized null vector field.
Then, the frame 
$\mathcal{F}(t):=(\vect{e}(t),L(t),N(t))$
satisfies 
$$
  {\nabla_{t}}\mathcal{F}(t)=\mathcal{F}(t) 
  \begin{pmatrix}
  0 & -\kappa_L & -\kappa_N\\
  \kappa_N & -\kappa_G & 0\\
  \kappa_L & 0 & \kappa_G
  \end{pmatrix}.
$$
\end{proposition}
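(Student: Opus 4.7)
The plan is to expand each of $\nabla_t\vect{e}$, $\nabla_t L$, $\nabla_t N$ in the frame $\mathcal{F}(t)=(\vect{e},L,N)$ and identify the coefficients via inner products, using metric compatibility $\nabla\bar{g}=0$ together with the defining relations of the frame.

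First I would verify that $\mathcal{F}(t)$ is indeed a basis of $T_{\hat{\gamma}(t)}M^3$. The vector $\vect{e}$ is spacelike of unit length. The vector $L=df(\eta)$ is lightlike (since $\eta$ is null) and orthogonal to $\vect{e}$ because $\inner{df(\gamma')}{df(\eta)}=ds^2(\gamma',\eta)=0$. Hence $L$ lies in the $2$-dimensional timelike subspace $\vect{e}^{\perp}$, and any second lightlike direction in $\vect{e}^{\perp}$ linearly independent from $L$ together with $L$ spans $\vect{e}^{\perp}$; the normalization $\inner{N}{L}=1$ pins down such an $N$ uniquely, so $\mathcal{F}(t)$ is a basis.

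Next I would differentiate the six relations $\inner{\vect{e}}{\vect{e}}=1$, $\inner{L}{L}=0$, $\inner{N}{N}=0$, $\inner{\vect{e}}{L}=0$, $\inner{\vect{e}}{N}=0$, $\inner{L}{N}=1$ along $t$ using metric compatibility. This yields $\inner{\nabla_t\vect{e}}{\vect{e}}=\inner{\nabla_t L}{L}=\inner{\nabla_t N}{N}=0$, together with $\inner{\nabla_t\vect{e}}{L}=-\inner{\vect{e}}{\nabla_t L}$, $\inner{\nabla_t\vect{e}}{N}=-\inner{\vect{e}}{\nabla_t N}$, and $\inner{\nabla_t L}{N}=-\inner{L}{\nabla_t N}$.

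Now write $\nabla_t\vect{e}=a_1\vect{e}+b_1 L+c_1 N$ and take inner products with $\vect{e},L,N$. Using $\inner{L}{L}=0$, $\inner{N}{N}=0$, $\inner{L}{N}=1$, this gives $a_1=\inner{\nabla_t\vect{e}}{\vect{e}}=0$, $c_1=\inner{\nabla_t\vect{e}}{L}=\kappa_L$, and $b_1=\inner{\nabla_t\vect{e}}{N}=\kappa_N$, by the Definitions \eqref{eq:Lsingular-curvature-p} and \eqref{eq:Lnormal-curvature-p}. The same procedure applied to $\nabla_t L=a_2\vect{e}+b_2 L+c_2 N$ yields $a_2=\inner{\nabla_t L}{\vect{e}}=-\inner{L}{\nabla_t\vect{e}}=-\kappa_L$, $c_2=\inner{\nabla_t L}{L}=0$, and $b_2=\inner{\nabla_t L}{N}=-\inner{L}{\nabla_t N}=-\kappa_G$ by \eqref{eq:Lgeodesic-torsion-p}. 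Finally $\nabla_t N=a_3\vect{e}+b_3 L+c_3 N$ gives $a_3=\inner{\nabla_t N}{\vect{e}}=-\inner{N}{\nabla_t\vect{e}}=-\kappa_N$, $b_3=\inner{\nabla_t N}{N}=0$, and $c_3=\inner{\nabla_t N}{L}=\kappa_G$. Assembling these into matrix form delivers exactly the claimed Frenet-type equation.

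The computation is entirely mechanical, so there is no serious obstacle; the only point requiring care is the nonstandard dual nature of $L$ and $N$ (both null, paired by $\inner{L}{N}=1$ rather than $\inner{L}{L}=\pm1$), which is why the coefficients end up in the off-diagonal pattern indicated by the stated matrix instead of the antisymmetric pattern one would see in a Euclidean or pseudo-orthonormal frame. Keeping this pairing straight in the inner-product extractions is the only place where a sign or position error could creep in.
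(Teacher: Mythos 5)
Your proof is correct and is precisely the ``standard method'' the paper invokes without writing out: differentiate the six defining relations of the frame using metric compatibility and extract the coefficients via the pairing $\inner{L}{N}=1$, with the signs and positions all checking out against the stated matrix. Nothing to add.
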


In the case that 
$t$ is not necessarily an arclength parameter
and $\eta$ is not necessarily a normalized null vector field,
we have the following formula.

\begin{proposition}\label{prop:curvature-G}
Let $\gamma(t)$ be the characteristic curve
consisting of lightlike points of the first kind,
and $\eta$ a null vector field.
Then, 
$\kappa_L(t)$, $\kappa_N(t)$, $\kappa_G(t)$ 
are written as
\begin{align}
\label{eq:Lsingular-curvature-G}
  \kappa_L(t) 
  &= \frac{1}{ |\hat{\gamma}'(t)|^2 \beta(t)^{\frac1{3}} }
           \inner{ \nabla_{t} \hat{\gamma}'(t) }{ L(t) },\\
\label{eq:Lnormal-curvature-G}
  \kappa_N(t) 
  &= \frac{ \beta(t)^{\frac1{3}} }{ |\hat{\gamma}'(t)|^2 }
         \inner{\nabla_{t} \hat{\gamma}'(t)}{N(t)},\\
\label{eq:Lgeodesic-torsion-G}
  \kappa_G(t) 
  &= \frac{1}{ |\hat{\gamma}'(t)| }
       \left(
       \inner{L(t)}{\nabla_{t} N(t)} 
       + \frac1{3} (\log |\beta(t)|)'
       \right),
\end{align}
respectively,
where $\beta(t)$ is a non-zero function
defined by
$
  \beta(t) := \left.\eta \inner{\eta f}{\eta f} \right|_{\gamma(t)}.
$
\end{proposition}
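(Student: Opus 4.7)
The plan is to derive each of the three formulas by comparing the general pair $(\gamma(t),\eta)$ with the canonical pair provided by an arclength parameter and a normalized null vector field, then using the chain rule together with orthogonality relations to identify the expressions in Definition \ref{def:invariants}.

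First I would reparametrize. Write $s$ for the arclength parameter of $\hat\gamma$, so that $ds/dt = |\hat\gamma'(t)|$, and set $\tilde{\vect{e}}(s) := \hat{\gamma}'(s) = \hat\gamma'(t)/|\hat\gamma'(t)| = \vect{e}(t)$. Next I would upgrade $\eta$ to a normalized null vector field: following Lemma \ref{lem:normalized-null} and formula \eqref{eq:vf-change}, put $\tilde\eta := \alpha\,\eta$ with $\alpha := \beta^{-1/3}$, so that $\tilde\eta\inner{\tilde\eta f}{\tilde\eta f}=1$ along $\gamma$. Then $\tilde L(s) := df(\tilde\eta) = \alpha\,L(t)$, and the normalization conditions \eqref{eq:L-normal} for the $N$-vector associated to $(\tilde{\vect{e}},\tilde L)$ are satisfied by $\tilde N(s) := \alpha^{-1} N(t)$, as is checked directly from $\inner{N}{\vect e}=\inner{N}{N}=0$ and $\inner{N}{L}=1$.

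With these replacements in hand, I would plug into the definitions \eqref{eq:Lsingular-curvature-p}--\eqref{eq:Lgeodesic-torsion-p}. For $\kappa_L$:
\[
\kappa_L=\inner{\nabla_s\tilde{\vect e}}{\tilde L}
 =\Bigl\langle \tfrac{1}{|\hat\gamma'|}\nabla_t\!\Bigl(\tfrac{\hat\gamma'(t)}{|\hat\gamma'(t)|}\Bigr),\,\alpha L\Bigr\rangle
 =\frac{\alpha}{|\hat\gamma'|^2}\inner{\nabla_t\hat\gamma'}{L}
 +\frac{\alpha}{|\hat\gamma'|}\Bigl(\tfrac{d}{dt}\tfrac{1}{|\hat\gamma'|}\Bigr)\inner{\hat\gamma'}{L}.
\]
The crucial observation is that the last term vanishes, because $L=df(\eta)$ with $\eta$ null along $LD$ gives $\inner{\hat\gamma'}{L}=ds^2(\gamma',\eta)=0$; substituting $\alpha=\beta^{-1/3}$ yields \eqref{eq:Lsingular-curvature-G}. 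The computation for $\kappa_N$ is formally identical with $\tilde L$ replaced by $\tilde N=N/\alpha$, and the analogous cross term drops out because $\inner{\hat\gamma'}{N}=|\hat\gamma'|\inner{\vect e}{N}=0$ by the very definition of $N$; this gives \eqref{eq:Lnormal-curvature-G}.

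The $\kappa_G$-formula is the only one where a derivative of the normalizing factor survives, and this will be the main obstacle to keep tidy. Expanding
\[
\kappa_G=\inner{\tilde L}{\nabla_s\tilde N}
 =\tfrac{1}{|\hat\gamma'|}\inner{\alpha L}{\nabla_t(\alpha^{-1}N)}
 =\tfrac{1}{|\hat\gamma'|}\inner{L}{\nabla_t N}
 +\tfrac{\alpha}{|\hat\gamma'|}\Bigl(\tfrac{d}{dt}\alpha^{-1}\Bigr)\inner{L}{N},
\]
and using $\inner{L}{N}=1$ together with $\alpha\,(d/dt)\alpha^{-1}=-(\log|\alpha|)'=\tfrac{1}{3}(\log|\beta|)'$ (from $\alpha=\beta^{-1/3}$) produces \eqref{eq:Lgeodesic-torsion-G}. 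The only delicacy is verifying that $(\tilde{\vect e},\tilde L,\tilde N)$ are genuinely the frame attached to the normalized data by \eqref{eq:tangent}--\eqref{eq:L-normal} so that Definition \ref{def:invariants} applies verbatim, and that the identifications of $\kappa_L,\kappa_N,\kappa_G$ at $t=0$ are independent of the arclength origin, both of which follow from the earlier well-definedness results.
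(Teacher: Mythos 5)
Your proposal is correct and follows essentially the same route as the paper: replace $\eta$ by the normalized field $\beta^{-1/3}\eta$ (so $\tilde L=\beta^{-1/3}L$, $\tilde N=\beta^{1/3}N$), reparametrize by arclength, and transform Definition \ref{def:invariants}; the vanishing of the cross terms via $\inner{\hat\gamma'}{L}=\inner{\hat\gamma'}{N}=0$ and the surviving $\tfrac13(\log|\beta|)'$ term in $\kappa_G$ are exactly what the paper's (more terse) computation uses.
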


\begin{proof}
As in Lemma \ref{lem:normalized-null},
$
  \bar{\eta}:= \beta^{-1/3} \eta
$
is a normalized null vector field.
Set $\bar{L}:=df(\bar{\eta})$, 
and set $\bar{N}$ so that 
$
  \inner{\hat{\gamma}'}{\bar{N}}
  =\inner{\bar{N}}{\bar{N}}=0$,
  $\inner{\bar{L}}{\bar{N}}=1
$
(cf.\ \eqref{eq:L-normal}).
Then, 
$\bar{L}(t) = \beta(t)^{-1/3} L(t)$,
$\bar{N}(t) = \beta(t)^{1/3} N(t)$
holds.
Moreover, 
$
  s:=\int_0^t |\hat{\gamma}'(\tau)|\,d\tau
$
gives an arclength parameter of $\gamma(t)$.
Then,
$$
  \kappa_L 
  = \inner{ \nabla_{ s } d\hat\gamma \left(\frac{d}{ds} \right) }{ \bar{L} },\qquad
  \kappa_N 
  = \inner{ \nabla_{ s } d\hat\gamma \left(\frac{d}{ds} \right) }{ \bar{N} } \quad 
 \left(\nabla_s:=\nabla_{\frac{d}{ds}}\right),
$$
implies \eqref{eq:Lsingular-curvature-G} 
and \eqref{eq:Lnormal-curvature-G}, respectively.
Moreover, 
\begin{equation*}
  \kappa_G
  = \inner{\bar{L}}{\nabla_{ s }\bar{N}}
  = \frac{ds}{dt}
       \inner{ \frac1{\beta^{\frac1{3}}} L }{
       \nabla_{ t } \left( \beta^{\frac1{3}} N \right) }
  = \frac{ 1 }{ |\hat{\gamma}'(t)| }\Bigl(
       \inner{ L }{\nabla_{ t }N } +(\log |\beta^{\frac1{3}}|)'  \Bigr)
\end{equation*}
holds, 
and hence,
we have \eqref{eq:Lgeodesic-torsion-G}.
\end{proof}

\begin{example}\label{ex:P-curved2}
As we seen in Example \ref{ex:P-curved1},
every lightlike point on the unit sphere $S^2$
is of the first kind.
Using Proposition \ref{prop:curvature-G},
we calculate their invariants 
$\kappa_L$, $\kappa_N$ and $\kappa_G$
here.
Let $f(u,v)$ be the parametrization of $S^2$ given by
\eqref{eq:param-S2}.
The characteristic curve is given by
$\gamma(t)=(t,\pm\pi/4)$,
and its image 
$\hat\gamma(t):=f(\gamma(t))$ 
is 
$\hat\gamma(t)=\frac{1}{\sqrt{2}} (\sin t,\cos t,\pm1).$
Since $\eta=\partial_v$ gives a null vector,
$L(t):=df_{\gamma(t)}(\eta)=\mp\frac{1}{\sqrt{2}} (\sin t,\cos t,\mp1)$
is a lightlike vector field along $\hat{\gamma}(t)$.
Then, 
the lightlike vector field along $\hat{\gamma}(t)$
satisfying \eqref{eq:L-normal}
is given by $N(t)=\mp\frac{1}{\sqrt{2}} (\sin t,\cos t,\pm1)$.
Applying Proposition \ref{prop:curvature-G}, we have
$$
\kappa_L(t)=\frac{1}{\sqrt[3]{2}},\qquad
\kappa_N(t)=\sqrt[3]{2},\qquad
\kappa_G(t)=0.
$$
In particular, the unit sphere has 
positive lightlike singular curvature 
$\kappa_L>0$ along $LD$
(cf.\ Corollary \ref{cor:shape}).
\end{example}

Let $\nabla$ be the Levi-Civita connection of $M^3$
and $\vect{e}(t)$ the unit tangent vector field 
of $\hat{\gamma}(t)$.
We call 
\begin{equation}\label{eq:curvature-vf}
R(t)=\nabla_{\vect{e}(t)}\vect{e}(t)
\end{equation}
the {\it curvature vector field} of $\hat{\gamma}(t)$.
To measure the causality of $R(t)$,
we set
$
  \theta(t) := \inner{R(t)}{R(t)}.
$
We call $\theta(t)$ 
the {\it causal curvature function} of $\hat{\gamma}(t)$.
By \eqref{eq:tangent},
The curvature vector field $R(t)$ is written as
$
  R(t)
  =( 
     - \inner{\hat{\gamma}'}{\nabla_{ t }\hat{\gamma}'}\hat{\gamma}'
     + \left| \hat{\gamma}' \right|^2 \nabla_{ t }\hat{\gamma}' 
  )/|\hat{\gamma}'|^4,
$
and hence we have
$
  \theta(t)
  =( 
    |\hat{\gamma}'(t)|^2 
    \inner{ \nabla_{ t }\hat{\gamma}'(t) }{ \nabla_{ t }\hat{\gamma}'(t) }  
     - \inner{ \hat{\gamma}'(t) }{ \nabla_{ t }\hat{\gamma}'(t) }^2
   )/|\hat{\gamma}'(t)|^6.
$

\begin{corollary}\label{cor:product}
The causal curvature function $\theta(t)$ satisfies
$\theta(t) = 2 \kappa_L(t)\kappa_N(t)$.
\end{corollary}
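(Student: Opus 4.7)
The plan is to reduce the corollary to a direct algebraic consequence of the Frenet-type system in Proposition~\ref{prop:Frenet}. Since the curvature vector field $R(t)=\nabla_{\vect{e}(t)}\vect{e}(t)$ and the causal curvature function $\theta(t)=\inner{R(t)}{R(t)}$ depend only on the unit tangent field $\vect{e}(t)$ of $\hat\gamma$, not on a particular parametrization, we may reparametrize $\gamma$ by arclength without loss of generality. Under the arclength parametrization, $\vect{e}(t)=\hat\gamma'(t)$ and $\nabla_{\vect{e}}\vect{e}=\nabla_t\hat\gamma'(t)=R(t)$.

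Next, I invoke Proposition~\ref{prop:Frenet} with a normalized null vector field $\eta$ chosen as in Lemma~\ref{lem:normalized-null}. Reading off the first column of the matrix describing $\nabla_t\mathcal{F}(t)$, we obtain
\begin{equation*}
R(t)=\nabla_t\vect{e}(t)=\kappa_N(t)\,L(t)+\kappa_L(t)\,N(t).
\end{equation*}
Then I simply compute $\inner{R(t)}{R(t)}$ using the normalization conditions~\eqref{eq:L-normal}, together with the fact that $L(t)=df(\eta(t))$ is lightlike (so $\inner{L}{L}=0$). Since $\inner{L}{L}=\inner{N}{N}=0$ and $\inner{L}{N}=1$, the expansion collapses to
\begin{equation*}
\theta(t)=\kappa_N^2\inner{L}{L}+2\kappa_L\kappa_N\inner{L}{N}+\kappa_L^2\inner{N}{N}=2\kappa_L(t)\kappa_N(t),
\end{equation*}
which is the desired identity.

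There is essentially no obstacle; the only point that requires a moment of care is checking that the two sides of the claimed formula are genuinely parametrization-independent (both are defined via the unit tangent and the intrinsically determined frame), so that passing to arclength is legitimate. Everything else is a one-line inner-product computation once Proposition~\ref{prop:Frenet} is in hand.
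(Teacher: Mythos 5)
Your proof is correct and follows essentially the same route as the paper: pass to the arclength parametrization, read off $R(t)=\nabla_t\vect{e}(t)=\kappa_N(t)L(t)+\kappa_L(t)N(t)$ from Proposition~\ref{prop:Frenet}, and expand $\inner{R}{R}$ using $\inner{L}{L}=\inner{N}{N}=0$ and $\inner{L}{N}=1$. The extra remark on parametrization-independence is a harmless refinement of what the paper leaves implicit.
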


\begin{proof}
Suppose that 
$\hat{\gamma}(t)$ is parametrized by arc length
and $\eta$ is normalized.
Then $R(t)$ is written as $R(t)=\nabla_{ t }\hat{\gamma}'(t)$.
By Proposition \ref{prop:Frenet},
$R(t)=\kappa_L(t)N(t)+\kappa_N(t)L(t)$ holds.
Substituting this into $\theta(t)=\inner{R(t)}{R(t)}$,
we have $\theta(t) = 2 \kappa_L(t)\kappa_N(t)$.
\end{proof}

\subsection{Lightlike singular curvature is intrinsic}

Here, we show that the L-singular curvature $\kappa_L(t)$ 
depends only on the first fundamental form $ds^2$,
namely, $\kappa_L(t)$ is an intrinsic invariant.
To give an expression of $\kappa_L(t)$
on a coordinate neighborhood,
we introduce the following coordinate system.

\begin{definition}\label{def:adapted}
Let $p\in \Sigma$ be a lightlike point of the first kind.
Then, a coordinate system $(U;u,v)$ centered at $p$
is called {\it adapted} if
$\partial_v$ gives a null vector field
and the $u$-axis coincides with $LD$ on $U$.
\end{definition}

The existence of adapted coordinate systems 
can be proved easily.
Adapted coordinate systems can be determined intrinsically,
namely, they are defined in terms of the first fundamental form $ds^2$.
In fact, we can check that $(u,v)$ is an adapted coordinate system
if and only if 
\begin{equation}\label{eq:adapted-EFG}
  F(u,0)=G(u,0)=0
\end{equation}
holds,
cf.\ \eqref{eq:1st-FF}.
The following implies that
the lightlike singular curvature $\kappa_L$ 
is an intrinsic invariant.

\begin{proposition}\label{prop:kappa_L-adap}
Let $p\in \Sigma$ be a lightlike point of the first kind 
and take an adapted coordinate system $(U;u,v)$.
Then, we have
\begin{equation}\label{eq:kappa_L-adap}
  \kappa_L(u) 
  = - \frac{E_v(u,0)}{2E(u,0)\sqrt[3]{G_v(u,0)} }.
\end{equation}
\end{proposition}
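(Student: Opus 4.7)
The plan is to compute $\kappa_L$ directly from the formula \eqref{eq:Lsingular-curvature-G} by choosing the most convenient characteristic curve and null vector field afforded by the adapted coordinate system, and then simplify the resulting expression using the torsion-free property of $\nabla$ together with the defining identities \eqref{eq:adapted-EFG} of an adapted chart.

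Concretely, since the $u$-axis coincides with $LD$ on $U$, we parametrize the characteristic curve by $\gamma(t)=(t,0)$, so that $\hat{\gamma}(t)=f(t,0)$ and $\hat{\gamma}'(t)=f_u(t,0)$; in particular $|\hat{\gamma}'(t)|^2=E(t,0)$. Since $\partial_v$ is a null vector field in an adapted coordinate system, we take $\eta=\partial_v$, giving $L(t)=df(\eta)=f_v(t,0)$ and
$$
\beta(t)=\eta\inner{\eta f}{\eta f}\big|_{\gamma(t)}=\partial_v G(u,v)\big|_{(t,0)}=G_v(t,0).
$$
With these choices, formula \eqref{eq:Lsingular-curvature-G} reduces to
$$
\kappa_L(u)=\frac{1}{E(u,0)\,\sqrt[3]{G_v(u,0)}}\,\inner{\nabla_u f_u}{f_v}\Big|_{(u,0)}.
$$

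The core of the argument is then to rewrite the inner product $\inner{\nabla_u f_u}{f_v}$ intrinsically. Differentiating $F=\inner{f_u}{f_v}$ with respect to $u$ and applying the metric compatibility of $\nabla$ yields
$$
F_u=\inner{\nabla_u f_u}{f_v}+\inner{f_u}{\nabla_u f_v}.
$$
Since $\nabla$ is torsion-free we have $\nabla_u f_v=\nabla_v f_u$, and hence
$$
\inner{f_u}{\nabla_u f_v}=\inner{f_u}{\nabla_v f_u}=\tfrac{1}{2}\partial_v\inner{f_u}{f_u}=\tfrac{1}{2}E_v.
$$
Combining these gives $\inner{\nabla_u f_u}{f_v}=F_u-\tfrac{1}{2}E_v$. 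By \eqref{eq:adapted-EFG}, $F(u,0)\equiv 0$ along $LD$, so $F_u(u,0)=0$, leaving $\inner{\nabla_u f_u}{f_v}|_{(u,0)}=-\tfrac{1}{2}E_v(u,0)$. Substituting this into the displayed expression for $\kappa_L(u)$ yields \eqref{eq:kappa_L-adap}.

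There is no real obstacle here; the only subtlety is to notice that the adapted-coordinate conditions \eqref{eq:adapted-EFG} ensure $F_u(u,0)=0$, which is precisely what eliminates the term in $\inner{\nabla_u f_u}{f_v}$ involving second-order extrinsic information, leaving a formula depending only on the coefficients $E,G$ and their first $v$-derivatives on $LD$. This also makes manifest that $\kappa_L$ is an intrinsic invariant, since adapted coordinate systems are themselves characterized intrinsically via \eqref{eq:adapted-EFG}.
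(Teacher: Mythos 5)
Your proof is correct and follows essentially the same route as the paper: both apply \eqref{eq:Lsingular-curvature-G} with $\gamma(u)=(u,0)$, $\eta=\partial_v$, $\beta=G_v(u,0)$, derive $\inner{\nabla_u f_u}{f_v}=F_u-\tfrac12 E_v$ from metric compatibility and torsion-freeness, and then use $F(u,0)\equiv 0$ to kill the $F_u$ term. No gaps.
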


\begin{proof}
Since
$
  \inner{\nabla_u f_u}{f_v} 
  = \inner{f_{u}}{f_v}_u -\inner{\nabla_v f_u}{f_u},
$
we have
\begin{equation}\label{eq:fuufv}
  \inner{\nabla_u f_u}{f_v} 
  = F_u- \frac1{2}E_v.
\end{equation}
As $\gamma(u)=(u,0)$ is a characteristic curve
and $\eta=\partial_v$ is a null vector field,
\eqref{eq:Lsingular-curvature-G} implies
$\kappa_L(u)  
= \left. \inner{ \nabla_u f_u }{ f_v}/ 
   (E \sqrt[3]{ G_v } ) \right|_{v=0}.
$
Since $F_u(u,0)=0$ holds by \eqref{eq:adapted-EFG},
we have \eqref{eq:kappa_L-adap}.
\end{proof}

\subsection{Balancing curvature}

Here, we introduce an invariant called 
the \emph{balancing curvature} $\kappa_B$ 
(Definition \ref{def:kappa_B})
along the lightlike points of the first kind,
which is related to the behavior of 
the Gaussian curvature, as we shall see later
in Section \ref{sec:Gauss}.
To define the balancing curvature,
we use a coordinate system
which is specially customized 
at lightlike points of the first kind.

\begin{definition}
Let $p\in \Sigma$ be a lightlike point of the first kind.
Then an adapted coordinate neighborhood $(U;u,v)$
of $p$ is called \emph{specially adapted}
if 
$E(u,0)=1$ and $\lambda_v(u,0)=1$
hold, cf.\ \eqref{eq:1st-FF}.
\end{definition}

We remark that,
on an adapted coordinate neighborhood $(U;u,v)$
such that $E(u,0)=1$,
the condition
$\lambda_v(u,0)=1$ is equivalent to 
$G_v(u,0)=1$.

\begin{proposition}\label{prop:special-adapted}
Let $p\in \Sigma$ be a lightlike point of the first kind.
Then there exists a specially adapted coordinate system
centered at $p$.
\end{proposition}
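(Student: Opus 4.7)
The plan is to start from an arbitrary adapted coordinate system and perform two successive and independent rescalings: first a reparametrization of the $u$-direction to achieve $E(u,0)=1$, and then a fiberwise rescaling in the transverse direction to achieve $G_v(u,0)=1$ (which, combined with $E(u,0)=1$, yields $\lambda_v(u,0)=1$). The point is that these two normalizations decouple, so no balancing trick is needed.

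Concretely, pick an adapted coordinate neighborhood $(U_0;x,y)$ centered at $p$, whose existence was noted after Definition \ref{def:adapted}. Thus $F(x,0)=G(x,0)=0$, while $E(x,0)>0$ because $df(\partial_x)|_{y=0}$ is spacelike along $LD$ (as $p$ is of the first kind). For the first step I would set
\[
  u(x) := \int_0^x \sqrt{E(s,0)}\,ds,
\]
which is a smooth local diffeomorphism near $0$. Keeping $y$ as the second coordinate, the vector field $\partial_y$ is unchanged, so $(u,y)$ is still adapted; a direct computation gives the new coefficient $\tilde E(u,0) = E(x(u),0)/u'(x)^2 = 1$, while $\tilde F(u,0) = \tilde G(u,0) = 0$ are preserved.

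For the second step, set $g(u) := \tilde G_y(u,0)$. By Proposition \ref{prop:type1} applied to the null vector field $\eta=\partial_y$, we have $g(u) = \eta\lambda(u,0) \neq 0$, and after replacing $y$ by $-y$ if necessary we may assume $g(u) > 0$. I then define
\[
  v := c(u)\,y, \qquad c(u) := g(u)^{1/3},
\]
so that the inverse change reads $y = v/c(u)$. Writing $\bar f(u,v) := f(u, v/c(u))$, the chain rule gives $\bar f_u(u,0) = f_u(u,0)$ and $\bar f_v(u,0) = f_y(u,0)/c(u)$, so the new fundamental quantities on $\{v=0\}$ satisfy $\bar E(u,0) = 1$, $\bar F(u,0) = 0$, $\bar G(u,0) = 0$, and
\[
  \bar G_v(u,0) = \frac{\tilde G_y(u,0)}{c(u)^3} = 1.
\]
Hence $\bar\lambda_v(u,0) = \bar E(u,0)\,\bar G_v(u,0) = 1$, and $(u,v)$ is a specially adapted coordinate system centered at $p$.

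The only thing to verify carefully is that the $y$-rescaling in step two does not disturb what step one achieved. This is immediate because $c$ depends on $u$ only, and at $v=0$ we have $\bar f_u(u,0) = f_u(u,0)$; no cross terms appear. Thus there is essentially no obstacle — the construction is a routine normalization, and the main content is the observation that the conditions $E(u,0)=1$ and $\lambda_v(u,0)=1$ can be arranged sequentially without interference.
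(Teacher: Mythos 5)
Your proof is correct and follows essentially the same route as the paper's: first an arclength-type reparametrization $u(x)=\int_0^x\sqrt{E(s,0)}\,ds$ to normalize $E(u,0)=1$, then the transverse rescaling $v=G_y(u,0)^{1/3}\,y$ to force $G_v(u,0)=1$, hence $\lambda_v(u,0)=1$. (The sign adjustment of $y$ is not even needed, since the real cube root already makes $\bar G_v(u,0)=g/c^3=1$ regardless of the sign of $g$.)
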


\begin{proof}
Let $(z,y)$ be an adapted coordinate system
and set 
$x(z):=\int_{0}^{z} E^*(t,0)^{1/2}\,dt$,
where $E^*:=\inner{f_z}{f_z}$.
Then $(x,y)$
is an adapted coordinate system
such that $\hat{E}(x,0)=1$ 
(cf.\ \eqref{eq:1FF-hat}).
Setting $u:=x$ and
$v:=\hat{G}_y(x,0)^{1/3}\,y$,
we have 
\begin{equation}\label{eq:dv}
  u_x=1,\quad
  u_y=0,\quad
  v_x= \left(\hat{G}_y(x,0)^{1/3} \right)_{x}\, y,\quad
  v_y= \hat{G}_y(x,0)^{1/3}.
\end{equation}
Then \eqref{eq:Exy}, \eqref{eq:Fxy} and \eqref{eq:Gxy}
yield that
$E(u,0)=1$, $F(u,0)=G(u,0)=0$.
Namely, $(u,v)$ is also an adapted coordinate system
such that $E(u,0)=1$.
Thus, it suffices to show that $\lambda_v(u,0)=1$.
By \eqref{eq:Gxy}, we have $\hat{G}=Gv_y^2$.
Thus, on the $v$-axis, we have
$
  G_v(u,0) = v_y(x,0)^{-3}\hat{G}_y(x,0).
$
Hence, \eqref{eq:dv} yields $\lambda_v(u,0)=G_v(u,0)=1$.
\end{proof}


Now, we shall introduce an invariant called 
the \emph{balancing curvature} 
along the lightlike points of the first kind.

\begin{definition}\label{def:kappa_B}
Let $p\in \Sigma$ be a lightlike point of the first kind.
For a specially adapted coordinate neighborhood $(U;u,v)$
of $p$, we set
\begin{equation}\label{eq:kappa_B}
  \kappa_B(p) 
  := \left.\left( 
        -\frac{1}{2}E_{vv} +F_{uv} 
        + \frac{E_v}{10}\left( G_{vv}-2\left(F_v \right)^2 \right)
      \right)\right|_{u=v=0}.
\end{equation}
We call $\kappa_B(p)$ the {\it balancing curvature} at $p=(0,0)$
(cf.\ \eqref{eq:1st-FF}).
\end{definition}

The balancing curvature 
is related to the behavior of the geodesic curvature
and the Gaussian curvature at lightlike points
(cf.\ Remark \ref{rem:geodesic-curvature}
and Theorem \ref{thm:introB}).

Let $\gamma(t)$ be a characteristic curve
which consists of lightlike points of the first kind.
Then, we also describe the balancing curvature along $\gamma(t)$,
$\kappa_B(\gamma(t))$, as $\kappa_B(t)$, unless otherwise noted. 
Since 
the $u$-axis gives a characteristic curve, $\gamma(u)=(u,0)$,
on a specially adapted coordinate neighborhood $(U;u,v)$,
we have
\begin{equation}\label{eq:kappa_B_axis}
  \kappa_B(u) 
  = \left.\left( 
        -\frac{1}{2}E_{vv} +F_{uv} 
        + \frac{E_v}{10}\left( G_{vv}-2\left(F_v \right)^2 \right)
      \right)\right|_{v=0}.
\end{equation}

The definition of $\kappa_B$
does not depend on a choice of 
specially adapted coordinate systems
(Proposition \ref{prop:kappa_B}).
To prove it, we prepare the following.

\begin{lemma}\label{lem:s-adapted-rule}
Let $(u,v)$ and $(x,y)$ be specially adapted coordinate systems 
centered at a lightlike point $p$ of the first kind.
Then, we have
$$
  u_x(x,0)=\pm1,\quad
  u_y(x,0)=v_x(x,0)=0,\quad
  v_y(x,0)=1.
$$
\end{lemma}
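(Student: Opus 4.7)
The plan is to read off the Jacobian of the coordinate change $(x,y)\mapsto (u(x,y),v(x,y))$ along the $x$-axis by combining the metric transformation rules \eqref{eq:Exy}--\eqref{eq:Gxy} with the specially adapted normalizations in both coordinate systems. To start, since both the $u$-axis and the $x$-axis parametrize $LD$ near $p$, the coordinate change sends the $x$-axis into the $u$-axis, hence $v(x,0)\equiv 0$. Differentiating this identity in $x$ immediately yields $v_x(x,0)=0$.

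Next, I would evaluate \eqref{eq:Exy} and \eqref{eq:Gxy} at $(x,0)$. By \eqref{eq:adapted-EFG} together with the specially adapted condition on $(u,v)$, we have $E(u,0)=1$ and $F(u,0)=G(u,0)=0$, so those relations collapse to
\begin{equation*}
\hat E(x,0)=u_x(x,0)^2,\qquad \hat G(x,0)=u_y(x,0)^2.
\end{equation*}
The corresponding specially adapted conditions for $(x,y)$ give $\hat E(x,0)=1$ and $\hat G(x,0)=0$, so one reads off $u_x(x,0)=\pm 1$ and $u_y(x,0)=0$ at once.

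For the remaining identity $v_y(x,0)=1$, I would differentiate \eqref{eq:Gxy} with respect to $y$ and evaluate on $y=0$. Every term in the expansion of $\hat G_y$ carries at least one factor among $u_y$, $F$, or $G$ except for the single term coming from $G_v v_y \cdot v_y^2$, so using $u_y(x,0)=0$ and $F(u,0)=G(u,0)=0$ the evaluation collapses to $\hat G_y(x,0)=G_v(u,0)\,v_y(x,0)^3$. The specially adapted conditions $G_v(u,0)=1$ and $\hat G_y(x,0)=1$ then force $v_y(x,0)^3=1$, and since $1$ has a unique real cube root, $v_y(x,0)=1$. Note the asymmetry with $u_x$: the square root gave two signs while the cube root is uniquely determined, which is exactly what the statement claims.

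The only real work is the bookkeeping for the expansion of $\partial_y\bigl(Eu_y^2+2Fu_yv_y+Gv_y^2\bigr)$, where one has to check that every term carrying $u_{yy}$, $v_{yy}$, or $u_y$-factors from the chain-rule derivatives of $E$, $F$, $G$ dies on $LD$ once $u_y(x,0)=0$ and $F(u,0)=G(u,0)=0$ are substituted. Apart from this bookkeeping the lemma is a direct consequence of the transformation formulas \eqref{eq:Exy}--\eqref{eq:Gxy}, so I do not expect any substantive obstacle.
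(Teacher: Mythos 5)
Your proposal is correct and follows essentially the same route as the paper: both derive $v(x,0)\equiv 0$ from adaptedness, read off $u_x(x,0)^2=1$ and $u_y(x,0)^2=0$ from \eqref{eq:Exy} and \eqref{eq:Gxy} using the normalizations $E(u,0)=\hat E(x,0)=1$, $F=G=\hat F=\hat G=0$ on the axis, and obtain $v_y(x,0)^3=1$ by differentiating \eqref{eq:Gxy} in $y$. The bookkeeping you describe for $\hat G_y(x,0)=G_v(u,0)v_y(x,0)^3$ checks out, so there is no gap.
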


\begin{proof}
The metric coefficients
$\hat{E}$, $\hat{F}$, $\hat{G}$
with respect to the coordinate system $(x,y)$
are expressed as
\eqref{eq:Exy}, 
\eqref{eq:Fxy}, 
\eqref{eq:Gxy},
respectively,
cf.\ \eqref{eq:1FF-hat}. 
The adaptedness of $(u,v)$ and $(x,y)$ 
yields $v(x,0)=0$, and hence $v_x(x,0)=0$ holds.
Since $(u,v)$ and $(x,y)$ are specially adapted,
$E(u,0)=G_v(u,0)=\hat{E}(x,0)=\hat{G}_y(x,0)=1$ and
$F(u,0)=G(u,0)=\hat{F}(x,0)=\hat{G}(x,0)=0$ hold.
Hence we have 
$
  u_x(x,0)^2=1,~
  u_y(x,0)^2=0,~
  v_y(x,0)^3=1,
$
which gives the desired conclusion.
\end{proof}

\begin{proposition}\label{prop:kappa_B}
The definition of the balancing curvature $\kappa_B$
does not depend on a choice of 
specially adapted coordinate systems.
\end{proposition}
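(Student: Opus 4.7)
The plan is to verify invariance by a direct substitution under change of specially adapted coordinates. Let $(u,v)$ and $(x,y)$ be two specially adapted coordinate systems centered at $p$, and write the coordinate change as $u = u(x,y)$, $v = v(x,y)$.

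First I would record the jet of the coordinate change along the characteristic curve. Since both the $u$- and $x$-axes parametrize $LD$ near $p$, we have $v(x,0) \equiv 0$ and $u(x,0) = \ep x$ with $\ep = \pm 1$. Combining this with Lemma \ref{lem:s-adapted-rule} and differentiating in $x$ along $\{y = 0\}$, one finds on the $x$-axis
\[
  u_x = \ep, \quad u_y = 0, \quad v_x = 0, \quad v_y = 1, \quad
  u_{xx} = u_{xy} = u_{xxy} = v_{xx} = v_{xy} = v_{xxy} = 0,
\]
so the only free second- and third-order coefficients at the origin that can affect the subsequent computation are $A := u_{yy}(0)$, $B := v_{yy}(0)$, and $C := u_{xyy}(0)$. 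In parallel, differentiating the specially adapted conditions $E(u,0) = 1$, $F(u,0) = 0$, $G(u,0) = 0$, $G_v(u,0) = 1$ in $u$ yields $E_u(p) = F_u(p) = G_u(p) = G_{uv}(p) = 0$ and similar vanishings, which drastically cut down the number of surviving terms at the next step.

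Next I would substitute this jet data into the transformation laws \eqref{eq:Exy}--\eqref{eq:Gxy} and compute, via the product rule, the five quantities at the origin in the $(x,y)$ system that enter the formula \eqref{eq:kappa_B}. The expected outcome is
\begin{align*}
  \hat{E}_y(0) &= E_v(0), \\
  \hat{E}_{yy}(0) &= E_{vv}(0) + E_v(0)\,B + 2\ep C, \\
  \hat{F}_y(0) &= \ep\,(A + F_v(0)), \\
  \hat{F}_{xy}(0) &= F_{uv}(0) + \ep C, \\
  \hat{G}_{yy}(0) &= G_{vv}(0) + 5 B + 2 A^2 + 4 A F_v(0).
\end{align*}
Notably, the remaining free parameter $v_{xyy}(0)$ does not appear, which is a useful consistency check.

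Substituting into $\hat{\kappa}_B = -\tfrac{1}{2} \hat{E}_{yy} + \hat{F}_{xy} + \tfrac{\hat{E}_y}{10}\bigl(\hat{G}_{yy} - 2(\hat{F}_y)^2\bigr)$ and using $\ep^2 = 1$ so that $(\hat{F}_y)^2 = (A + F_v(0))^2$, the cancellations are exactly what the coefficients $-\tfrac{1}{2}$, $\tfrac{1}{10}$, and $-2$ are designed to produce: the $\ep C$-contributions cancel between $-\tfrac{1}{2} \hat{E}_{yy}$ and $\hat{F}_{xy}$; the $B$-contributions cancel between $-\tfrac{1}{2} E_v(0) B$ and $\tfrac{E_v(0)}{10} \cdot 5 B$; and the term $-2(A + F_v(0))^2$ removes the $2 A^2 + 4 A F_v(0)$ sitting inside $\hat{G}_{yy}$, leaving only $-2 F_v(0)^2$. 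The result is exactly the right-hand side of \eqref{eq:kappa_B} computed in $(u,v)$, so $\hat{\kappa}_B(p) = \kappa_B(p)$. The main obstacle is the length of the product-rule bookkeeping in step two: each of the five quantities decomposes into many summands, and one has to be vigilant to pick up the few surviving contributions, in particular the $\ep C$-terms arising from the third-order cross derivative $u_{xyy}(0)$. The clean cancellation in the final step, together with the absence of $v_{xyy}(0)$, is strong evidence that the coefficients $-\tfrac{1}{2}$, $\tfrac{1}{10}$, and $-2$ in \eqref{eq:kappa_B} are all precisely what is required.
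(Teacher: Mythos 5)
Your proposal is correct and follows essentially the same route as the paper: using Lemma \ref{lem:s-adapted-rule} to pin down the jet of the coordinate change along $LD$, computing $\hat{E}_y$, $\hat{E}_{yy}$, $\hat{F}_y$, $\hat{F}_{xy}$, $\hat{G}_{yy}$ from \eqref{eq:Exy}--\eqref{eq:Gxy} (your expressions in $A=u_{yy}$, $B=v_{yy}$, $C=u_{xyy}$ agree term-for-term with the paper's), and verifying the same cancellations in the combination \eqref{eq:kappa_B}.
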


\begin{proof}
Let $(u,v)$ and $(x,y)$ be specially adapted coordinate systems 
centered at a lightlike point $p$ of the first kind.
By \eqref{eq:Exy}, \eqref{eq:Fxy}, \eqref{eq:Gxy},
and Lemma \ref{lem:s-adapted-rule},
we have 
$\hat{E}_{y}(x,0) = E_v(u,0)$,
$\hat{F}_{y}(x,0) = \pm (F_v(u,0) + u_{yy}(x,0))$
and
{\allowdisplaybreaks
\begin{align*}
  \hat{E}_{yy}(x,0) &= E_{vv}(u,0) + E_v(u,0) v_{yy}(x,0) \pm 2 u_{xyy}(x,0),\\
  \hat{F}_{xy}(x,0) &= F_{uv}(u,0) \pm u_{xyy}(x,0),\\
  \hat{G}_{yy}(x,0) &= G_{vv}(u,0) + 4 F_v(u,0) u_{yy}(x,0) 
                            + 2 u_{yy}(x,0)^2 + 5 v_{yy}(x,0),
\end{align*}}%
where the sign ``$\pm$'' corresponds to 
$u_x(x,0)=\pm1$ in Lemma \ref{lem:s-adapted-rule}.
Then, we can check that
\begin{multline*}
  \left.\left( \frac{\hat{E}_{yy}}{2} -\hat{F}_{xy} 
      - \frac{\hat{E}_y( \hat{G}_{yy}-2\hat{F}_y^2)}{10}\right)\right|_{y=0}
  =\left.\left( \frac{E_{vv}}{2} -F_{uv} 
      - \frac{E_v( G_{vv}-2F_v^2 )}{10}\right)\right|_{v=0},
\end{multline*}
which gives the desired result.
\end{proof}

For an adapted coordinate system,
which is not necessarily special,
we have the following formula of the balancing curvature.

\begin{proposition}\label{prop:kappa_B_adapted}
Let $p\in \Sigma$ be a lightlike point of the first kind
and $(U;u,v)$ an adapted coordinate neighborhood
centered at $p=(0,0)$.
Then, the balancing curvature is written as 
\begin{multline}\label{eq:kappa_B_adapted}
  \kappa_B(p) =  \left. 
  \frac{ -5G_v \left(E E_{vv} - 2 E F_{uv} + E_u F_v\right)
         +E_v \left(E G_{vv} - 2 (F_v)^2\right)}
       {10E^2 (G_v)^{\frac{5}{3}}} \right|_{u=v=0}.
\end{multline}
\end{proposition}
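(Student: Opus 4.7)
The plan is to reduce \eqref{eq:kappa_B_adapted} to the definition \eqref{eq:kappa_B} by constructing an explicit specially adapted coordinate system from the given adapted system $(u,v)$ and tracking how the metric coefficients transform.

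Following the construction used in the proof of Proposition~\ref{prop:special-adapted}, I would set
$$
x = x(u) := \int_{0}^{u}\sqrt{E(t,0)}\,dt, \qquad
y := G_v(u,0)^{1/3}\,v,
$$
and verify, using the transformation rules \eqref{eq:Exy}--\eqref{eq:Gxy} together with the adaptedness relations $F(u,0) = G(u,0) = 0$, that the resulting coordinate system $(x,y)$ is specially adapted at $p$. Writing $\alpha(x) := G_v(u(x),0)^{1/3}$, the change of variables from $(u,v)$ to $(x,y)$ has inverse $u = u(x)$, $v = y/\alpha(x)$, with Jacobian data along $y=0$ given by $u_x = 1/\sqrt{E(u,0)}$, $u_y = 0$, $v_x = -y\alpha'(x)/\alpha(x)^2$, and $v_y = 1/\alpha(x)$.

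Next I would compute the partial derivatives $\hat{E}_y$, $\hat{F}_y$, $\hat{E}_{yy}$, $\hat{F}_{xy}$, $\hat{G}_{yy}$ of the transformed metric at the origin by differentiating the identities $\hat{E} = Eu_x^2 + 2F u_x v_x + G v_x^2$, etc. Because $F(u,0) = G(u,0) \equiv 0$, many terms vanish upon evaluation at $y=0$, and the surviving quantities are expressible in terms of $E$, $E_u$, $E_v$, $E_{vv}$, $F_v$, $F_{uv}$, $G_v$, $G_{vv}$ (and, a priori, $G_{uv}$) at $(0,0)$. Substituting into
$$
\kappa_B(p) = \left.\left(
-\tfrac{1}{2}\hat{E}_{yy} + \hat{F}_{xy}
+ \tfrac{1}{10}\hat{E}_y\bigl(\hat{G}_{yy} - 2\hat{F}_y^2\bigr)
\right)\right|_{(0,0)}
$$
and clearing over the common denominator $10E^2 G_v^{5/3}$ should then yield \eqref{eq:kappa_B_adapted}.

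The main obstacle I anticipate is the bookkeeping for the second-order terms contributed by the nonlinear part of the coordinate change: the factor $\alpha'(x)$ introduces $G_{uv}(0,0)$ into both $\hat{E}_{yy}$ and $\hat{F}_{xy}$, and these contributions must cancel between the $-\tfrac{1}{2}\hat{E}_{yy}$ and $\hat{F}_{xy}$ pieces to leave only the variables that appear on the right-hand side of \eqref{eq:kappa_B_adapted}. This cancellation is forced by Proposition~\ref{prop:kappa_B} (which guarantees that $\kappa_B$ is independent of the specially adapted chart built from $(u,v)$), and it serves as a useful internal check on what is otherwise a direct but lengthy chain-rule computation.
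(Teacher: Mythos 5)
Your proposal is correct and follows essentially the same route as the paper: the paper's proof likewise passes to a specially adapted chart $(x,y)$, derives the transition data $u_x(x,0)=\pm 1/\sqrt{E}$, $u_y(x,0)=v_x(x,0)=0$, $v_y(x,0)=1/\sqrt[3]{G_v}$, $u_{xx}(x,0)=-E_u/(2E^2)$ from \eqref{eq:Exy}--\eqref{eq:Gxy}, and substitutes into the defining expression \eqref{eq:kappa_B}. The only cosmetic difference is that you build the specially adapted chart explicitly (as in the proof of Proposition \ref{prop:special-adapted}), whereas the paper works with an arbitrary one; in both cases the undetermined higher-order transition terms (your $G_{uv}$ contributions via $\alpha'$) must cancel, as guaranteed by Proposition \ref{prop:kappa_B}.
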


\begin{proof}
Let $(x,y)$ be a specially adapted coordinate system.
Set $\hat{E}$, $\hat{F}$, $\hat{G}$
as \eqref{eq:1FF-hat}.
The adaptedness of $(u,v)$ and $(x,y)$ 
yields $v(x,0)=0$, and hence
$v_x(x,0)=0$
holds.
By \eqref{eq:Exy}, \eqref{eq:Fxy}, \eqref{eq:Gxy},
we have 
$u_x(x,0) = \pm 1/\sqrt{E(u,0)}$
and
$u_y(x,0) = 0$.
Then, 
\begin{equation}\label{eq:dell-xy}
  (\partial_x)_{(x,0)} = u_x(x,0)(\partial_u)_{(u,0)},\qquad
  (\partial_y)_{(x,0)} = v_y(x,0)(\partial_v)_{(u,0)}
\end{equation}
holds.
Taking the partial derivative of \eqref{eq:Gxy} 
with respect to $y$ along the $x$-axis,
we have $1=G_v(u,0)v_y(x,0)^3$.
Namely, 
$v_y(x,0) = 1/\sqrt[3]{G_v(u,0)}$
holds.
Taking the differentiation of 
$u_x(x,0)^2=1/E(u,0)$ 
with respect to $x$,
we have
$u_{xx}(x,0) = -E_u(u,0)/(2E(u,0)^2)$,
where we used \eqref{eq:dell-xy}.
Therefore,
\begin{align*}
  &-\frac{1}{2}\hat{E}_{yy}(x,0) +\hat{F}_{xy}(x,0) 
      + \frac{1}{10}\hat{E}_y(x,0)\left( \hat{G}_{yy}(x,0)-2\hat{F}_y(x,0)^2 \right)\\
  &= \frac{-1}{2E^2 G_v^{5/3}}
  \Biggl(G_v \left(E E_{vv} - 2 E F_{uv} 
  + E_u F_v\right)
  \left.
  -\frac1{5}E_v \left(E G_{vv} - 2 F_v^2\right)\Biggr)
  \right|_{(u,0)}
\end{align*}
holds, which gives the desired result.
\end{proof}

\begin{remark}[Behavior of the geodesic curvature]
\label{rem:geodesic-curvature}
Let $p\in \Sigma$ be a lightlike point of the first kind.
Take a specially adapted coordinate neighborhood $(U;u,v)$ of $p$.
For each $s$, with sufficiently small $|s|$,
let $\{\gamma^s(u)\}$ be the family of 
the $u$-curves $\gamma^s(u):=(u,s)$.
Then 
$\vect{e}^s(u):= (1/\sqrt{E}) \partial_u |_{v=s}$
gives the unit tangent vector field along $\gamma^s(u)$.
For $s\ne0$,
the unit conormal vector field along $\gamma^s(u)$
is given by
$
  \vect{n}^s(u)
  :=(1/\sqrt{E|\lambda|}) (-F\partial_u+E\partial_v)
  |_{v=s}.
$
Then,
the geodesic curvature function $\kappa_g^s(u)$
of $\gamma^s(u)$ is given by
$\kappa_g^s(u)
:=\inner{\nabla_{\vect{e}^s}\vect{e}^s }{\vect{n}^s}$.
By a direct calculation, we have
$$
  \kappa_g^s(u)
  =\left.
  \frac{-F E_u+2EF_u-EE_v}{2|\lambda|^{1/2}E^{3/2}}
  \right|_{v=s}.
$$
Thus, 
$\tilde{\kappa}_g(u,s):=|\lambda(u,s)|^{1/2}\kappa_g^s(u)$
is a smooth function with respect to the variables $(u,s)$
which has the following expansion:
$
\tilde{\kappa}_g(u,s)
  = k_1(u) + k_2(u)\, s + k_3(u,s)\,s^2,
$
where
\begin{align*}
  &k_1(u)
  =\tilde{\kappa}_g(u,0)
  = -\frac{1}{2}E_v(u,0),\\
  \quad
  &k_2(u)
  = \left.\frac{\partial}{\partial s}\right|_{s=0} \tilde{\kappa}_g(u,s)
  = -\frac{5}{4}E_v(u,0)^2-\frac1{2}E_{vv}(u,0)+F_{uv}(u,0),
\end{align*}
and $k_3(u,s)$ is a smooth function.
Since the L-singular curvature $\kappa_L(u)$ 
is written as
$\kappa_L(u)=-(1/2)E_v(u,0)$,
we have that $k_1(u)$ coincides with $\kappa_L(u)$.
Hence, if $\kappa_L(u)$ vanishes along the characteristic curve 
$\gamma(u)=(u,0)$,
the limit $\lim_{s\to 0}\kappa_g^s(u)$ exists,
and it is identically zero along $\gamma(u)$.
In this case,
the characteristic curve $\gamma(u)$ may be regarded 
as a kind of geodesics, which is 
related to the notion of {\it pseudo-geodesics} \cite{Steller},
cf.\ Lemma \ref{lem:Prop1-Steller}.
Moreover, in this case, 
$k_2(u)$ coincides with the balancing curvature $\kappa_B(u)$.
Hence, we may conclude that
the L-singular curvature $\kappa_L$
and the balancing curvature $\kappa_B$
are related to the behavior of the geodesic curvature
at lightlike points.
\end{remark}

\section{Behavior of the invariants
at lightlike points of the second kind}
\label{sec:2nd-kind}

In this section, 
we study the behavior of the four invariants,
$\kappa_L$, 
$\kappa_N$, 
$\kappa_G$, 
$\kappa_B$,
at lightlike points of the admissible second kind.
Since we investigate the local properties here, 
we assume that the ambient manifold $M^3$ is oriented
throughout this section.
We first consider $\kappa_L$
and prove the assertion (i) 
of Theorem \ref{thm:introA}.
Next, to calculate $\kappa_N$ and $\kappa_G$,
we prepare a formula of the cross product 
at a lightlike point
(Lemma \ref{lem:gaiseki}).
Then, we give a proof of the assertion (ii) 
of Theorem \ref{thm:introA}.
Finally, we show a similar result about 
$\kappa_G$ and $\kappa_B$ 
in Theorems \ref{thm:kG} and Theorem \ref{thm:kB}.

\subsection{Behavior of the lightlike singular curvature}

We shall prove that the L-singular curvature $\kappa_L$
diverges to $-\infty$ at a lightlike point of the admissible second kind.
We carry out the calculation on 
the following coordinate system:

\begin{definition}
Let $p\in \Sigma$ be a lightlike point of the admissible second kind.
A local coordinate system $(U;u,v)$
centered at $p$
such that the $u$-axis gives
a characteristic curve $\gamma(u)=(u,0)$
is said to be a {\it characteristic coordinate system}.
Then, there exists a smooth function $\ep(u)$ on the $u$-axis
such that 
\begin{equation}\label{eq:2nd-null}
  \eta(u)=\partial_u+\ep(u)\partial_v,
\end{equation}
is a null vector field $\eta(u)$ along $\gamma(u)=(u,0)$.
The admissibility of $p$ implies that
$\ep(0)=0$, 
$\ep(u)\not\equiv 0.$
Namely,
\begin{equation}\label{eq:zeroset}
  Z^c:=\{(u,0)\in U \,;\, \ep(u)\ne0\} 
\end{equation}
is not empty, and
consists of lightlike points of the first kind.
\end{definition}

Since $L(u)=df(\eta(u))=f_u(u,0)+\ep(u)f_v(u,0)$
is a lightlike vector field along 
$\hat{\gamma}(u)=f(u,0)$, 
we have
$
  \inner{L(u)}{f_u(u,0)}=\inner{L(u)}{f_v(u,0)}=0.
$
That is, 
\begin{equation}\label{eq:EFG-u-axis}
  E(u,0) 
  =\ep(u)^2 G(u,0),\quad
  F(u,0)= - \ep(u) G(u,0)
  \quad(G(u,0)>0).
\end{equation}

\begin{lemma}\label{lem:lambda_v}
On the $u$-axis, 
$\inner{\nabla_u f_{u}}{f_u}=\ep (2\ep' G +\ep G_u)/2$
and
$\inner{\nabla_u f_{u}}{f_v}=-E_v/2 -\ep' G - \ep G_u$
hold.
Moreover, it holds that
$E_v(0,0)\neq0$.
\end{lemma}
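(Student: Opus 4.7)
The plan is to derive the two identities by standard manipulations of the Levi--Civita connection applied to the coefficient functions $E$, $F$, $G$ of $ds^2$, and then to extract the non-vanishing of $E_v(0,0)$ from the non-degeneracy of $p$ combined with the relations \eqref{eq:EFG-u-axis}.

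For the first identity, I start from $E = \inner{f_u}{f_u}$ and differentiate in $u$ to get $E_u = 2\inner{\nabla_u f_u}{f_u}$, which holds everywhere on $U$. Restricting to the $u$-axis and using $E(u,0) = \ep(u)^2 G(u,0)$ from \eqref{eq:EFG-u-axis}, the Leibniz rule gives $E_u(u,0) = 2\ep(u)\ep'(u)G(u,0) + \ep(u)^2 G_u(u,0)$, which is exactly $\ep(2\ep' G + \ep G_u)$, and dividing by $2$ yields the first claim.

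For the second identity, I differentiate $F = \inner{f_u}{f_v}$ in $u$ to get $F_u = \inner{\nabla_u f_u}{f_v} + \inner{f_u}{\nabla_u f_v}$. Using the symmetry $\nabla_u f_v = \nabla_v f_u$ (no torsion), the last term equals $\inner{f_u}{\nabla_v f_u} = \tfrac{1}{2}E_v$. Therefore $\inner{\nabla_u f_u}{f_v} = F_u - \tfrac{1}{2}E_v$ on $U$, and restricting to the $u$-axis, where $F(u,0) = -\ep(u)G(u,0)$ by \eqref{eq:EFG-u-axis}, I get $F_u(u,0) = -\ep'(u)G(u,0) - \ep(u)G_u(u,0)$, producing the stated formula.

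The main point is the final assertion $E_v(0,0) \neq 0$. Since $p = (0,0)$ is non-degenerate, $d\lambda(0,0) \neq 0$, and since the $u$-axis is a characteristic curve the function $\lambda(u,0)$ vanishes identically, forcing $\lambda_u(u,0) \equiv 0$ and hence $\lambda_v(0,0) \neq 0$. Now compute $\lambda_v = E_v G + E G_v - 2 F F_v$. At $(0,0)$, the second-kind hypothesis gives $\ep(0)=0$, so by \eqref{eq:EFG-u-axis} both $E(0,0) = 0$ and $F(0,0) = 0$. Consequently $\lambda_v(0,0) = E_v(0,0) G(0,0)$, and since $G(0,0) > 0$, the non-vanishing of $\lambda_v(0,0)$ forces $E_v(0,0) \neq 0$. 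This step is essentially the only one requiring the geometric hypotheses on $p$; the rest is routine, so I expect no real obstacle beyond keeping careful track of where \eqref{eq:EFG-u-axis} is used.
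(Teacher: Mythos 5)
Your proof is correct and follows essentially the same route as the paper: both identities come from substituting the relations $E(u,0)=\ep^2G$, $F(u,0)=-\ep G$ into $\inner{\nabla_u f_u}{f_u}=E_u/2$ and $\inner{\nabla_u f_u}{f_v}=F_u-E_v/2$, and the final assertion follows from $\lambda_v(0,0)=E_v(0,0)G(0,0)\neq0$ exactly as in the paper. Your write-up is just slightly more explicit about why $\lambda_u(u,0)\equiv0$ forces $\lambda_v(0,0)\neq0$ and about evaluating at $\ep(0)=0$.
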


\begin{proof}
Substituting \eqref{eq:EFG-u-axis}
into 
$\inner{\nabla_u f_{u}}{f_u} = E_u/2$
and 
$\inner{\nabla_u f_u}{f_v}= F_u - E_v/2$
(cf.\ \eqref{eq:fuufv}),
we obtain the first assertion.
Differentiating $\lambda=EG-F^2$, we have
$\lambda_v(u,0)=E_vG+\ep^2 GG_v+2\ep GF_v.$
Since $\lambda(u,0)=0$,
the non-degeneracy yields 
$\lambda_v(0,0)\neq0$,
which implies the second assertion.
\end{proof}

\begin{proof}[Proof of the assertion {\rm (i)} of Theorem \ref{thm:introA}]
Let $p$ be a lightlike point of the admissible second kind.
Take a characteristic coordinate system $(U;u,v)$ centered at $p$.
Since $p$ is a lightlike point of the admissible second kind,
there exists a sequence $\{p_n\}_{n\in \N} \subset Z^c$
such that $\lim_{n\to \infty}p_n=p$.
We may write $p_n=(u_n,0)\,(\in Z^c)$.
By Proposition \ref{prop:curvature-G}, we have
$
\kappa_L(p_n)= 
       \inner{ \nabla_u f_{u} }{ \eta f }/
              (\inner{f_u}{f_u} \beta(u)^{1/3}) |_{(u,v)=(u_n,0)},
$
where
$\beta(u) = \eta\inner{\eta f}{\eta f}|_{\gamma(u)}$. 
Since
$
  \inner{\eta f}{\eta f}
  = E + 2 \ep F + \ep^2 G
$
holds
and $\inner{\eta f}{\eta f}$ is identically zero along the $u$-axis,
we have
\begin{equation}\label{eq:beta}
  \beta(u)
  = \ep(u)\, \rho(u)\qquad
  \left(
  \rho(u) := E_v(u,0) +2\ep(u) F_v(u,0) + \ep(u)^2 G_v(u,0)
  \right).
\end{equation}
On the other hand,
Lemma \ref{lem:lambda_v} yields
$
  \inner{ \nabla_u f_{u} }{ \eta f }
  = -\ep \left( \ep G_u + E_v \right)/2
$
on $Z^c$.
Together with \eqref{eq:EFG-u-axis},
we have
$$
  \kappa_L(p_n)
  = -\frac{1}{2} \left. \frac{ E_v + \ep G_u }
      { \ep^{4/3} 
      G \rho(u)^{1/3}} 
      \right|_{(u,v)=(u_n,0)}.
$$
Hence,
$$
  \lim_{n\to \infty}\ep(u_n)^{4/3} \kappa_L(p_n)
  = -\frac{ E_v(0,0)^{2/3} }{2G(0,0)} \,(<0)
$$
holds.
Therefore, we have that
$\kappa_L(p_n)$ diverges to $-\infty$ as $n\to \infty$.
\end{proof}

\subsection{Cross product}
Let $Q$ be a $2$-dimensional 
degenerate subspace of $T_pM^3$ $(p\in M^3)$.
Denote by $\mathcal{L}(Q)$ the union of 
the set of lightlike vectors of $Q$ and the zero-vector $\vect{0}\in Q$,
which is a $1$-dimensional degenerate subspace of $Q$.

\begin{lemma}\label{lem:gaiseki}
Take a non-zero spacelike tangent vector $\vect{v}\in T_pM^3$. 
Let $\vect{w}\in T_pM^3$ be a lightlike tangent vector 
satisfying $\inner{\vect{v}}{\vect{w}}=0$.
Then, either
$$
  \vect{v} \times \vect{w} = |\vect{v}| \, \vect{w}
  \quad\text{or}\quad
  \vect{v} \times \vect{w} = -|\vect{v}| \, \vect{w}
$$
holds.
Moreover, if $Q$ is a $2$-dimensional 
degenerate subspace of $T_pM^3$,
then we have $\vect{x} \times \vect{y} \in \mathcal{L}(Q)$
for each $\vect{x}$, $\vect{y}\in Q$.
\end{lemma}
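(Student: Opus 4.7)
My plan is to derive both assertions from the two basic properties of the vector product recorded just before Lemma \ref{lem:gaiseki}: that $\vect{v}\times\vect{w}$ is orthogonal to both factors, and that $\inner{\vect{v}\times\vect{w}}{\vect{v}\times\vect{w}} = -\inner{\vect{v}}{\vect{v}}\inner{\vect{w}}{\vect{w}}+\inner{\vect{v}}{\vect{w}}^2$.

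For the first assertion, the hypotheses $\inner{\vect{w}}{\vect{w}}=0$ and $\inner{\vect{v}}{\vect{w}}=0$ make the right-hand side of this identity vanish, so $\vect{v}\times\vect{w}$ is lightlike; it is also orthogonal to $\vect{w}$, and in a Lorentzian $3$-space the set of lightlike vectors orthogonal to a given lightlike $\vect{w}$ is exactly the line $\spann{\vect{w}}$. Hence $\vect{v}\times\vect{w}=c\vect{w}$ for some $c\in\R$. To pin down $c$, I would pass to a positively oriented orthonormal basis with $\vect{v}=|\vect{v}|\vect{e}_1$; then $\vect{w}\perp\vect{v}$ combined with $\inner{\vect{w}}{\vect{w}}=0$ forces $\vect{w}=\beta(\vect{e}_2\pm\vect{e}_3)$ for some $\beta\in\R$, and a direct application of \eqref{eq:CROSS}, using $\vect{e}_1\times\vect{e}_2=-\vect{e}_3$ and $\vect{e}_1\times\vect{e}_3=-\vect{e}_2$, yields $c=\mp|\vect{v}|$, which matches the two possibilities in the statement.

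For the second assertion, since $Q$ is degenerate its radical $\mathcal{L}(Q)$ is one-dimensional; I would fix a lightlike generator $\vect{w}$ and choose any $\vect{u}\in Q\setminus\spann{\vect{w}}$. Because the induced form on $Q$ is positive semidefinite with kernel $\spann{\vect{w}}$, the vector $\vect{u}$ is necessarily spacelike and satisfies $\inner{\vect{u}}{\vect{w}}=0$. Writing $\vect{x}=\alpha\vect{w}+a\vect{u}$ and $\vect{y}=\beta\vect{w}+b\vect{u}$, bilinearity and antisymmetry of the cross product, together with $\vect{u}\times\vect{u}=\vect{w}\times\vect{w}=\vect{0}$, collapse $\vect{x}\times\vect{y}$ to $(a\beta-\alpha b)\,\vect{u}\times\vect{w}$, and the first assertion identifies this as a multiple of $\vect{w}\in\mathcal{L}(Q)$. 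The only genuine subtlety is the sign in the first assertion: both possibilities $\pm|\vect{v}|\vect{w}$ do occur, corresponding to which of the two null directions in the Lorentzian $2$-plane $\vect{v}^{\perp}$ contains $\vect{w}$, and I should carry out the computation in a way that keeps this choice visible rather than trying to normalize it away.
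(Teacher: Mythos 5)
Your proof is correct and follows essentially the same route as the paper's: both reduce to a positively oriented orthonormal basis with $\vect{e}_1=|\vect{v}|^{-1}\vect{v}$, identify $\vect{w}$ as a multiple of $\vect{e}_2\mp\vect{e}_3$, compute via \eqref{eq:CROSS}, and obtain the second assertion by expanding $\vect{x}$, $\vect{y}$ over a basis of $Q$ consisting of a spacelike vector and a lightlike generator of $\mathcal{L}(Q)$ and invoking bilinearity. Your preliminary observation that $\vect{v}\times\vect{w}$ is a null vector orthogonal to $\vect{w}$, hence already proportional to $\vect{w}$, is a harmless extra step that the paper omits.
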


\begin{proof}
Set a spacelike unit tangent vector $\vect{e}_1:= |\vect{v}|^{-1}\vect{v}$.
Take $\vect{e}_2$, $\vect{e}_3\in T_pM^3$
so that $\left\{ \vect{e}_1, \vect{e}_2, \vect{e}_3 \right\}$
forms a positively oriented orthonormal basis for $T_pM^3$
(cf.\ \eqref{eq:ONB}).
Set two lightlike vectors $\vect{e}_+, \vect{e}_- \in T_pM^3$ as
$\vect{e}_+ := \vect{e}_2- \vect{e}_3$ and
$\vect{e}_- := \vect{e}_2+ \vect{e}_3$,
respectively.
Since $\vect{w} \in T_pM^3$ is a lightlike vector which is orthogonal to $\vect{v}$,
we have that $\vect{w}$ is parallel to
either $\vect{e}_+$ or $\vect{e}_-$.
Hence, there exists a non-zero real number $b\in \R$ such that
either $\vect{w} = b\vect{e}_+$ or
$\vect{w} = b\vect{e}_-$ holds.
By a straightforward calculation (cf.\ \eqref{eq:CROSS}),
$\vect{v} \times \vect{w} = |\vect{v}| \, \vect{w}$ 
(resp.\ $\vect{v} \times \vect{w} = -|\vect{v}| \, \vect{w}$)
holds if
$\vect{w} = b\vect{e}_+$
(resp.\ $\vect{w} = b\vect{e}_-$).

On the other hand, 
take a non-zero spacelike tangent vector $\vect{v}\in Q$
and let $\vect{w}\in Q$ be a lightlike tangent vector.
Then, 
we have
$Q={\rm Span}(\vect{v}, \vect{w})$,
$\mathcal{L}(Q)={\rm Span}(\vect{w})$, and
$\inner{\vect{v}}{\vect{w}}=0$.
Since $\vect{x}$ and $\vect{y}$
are written as linear combinations of 
$\vect{v}$ and $\vect{w}$,
there exists a square matrix $A$ of order $2$
such that 
$
  (\vect{x}, \vect{y})
  =(\vect{v}, \vect{w})A
$
holds.
Then, 
we have 
$
  \vect{x}\times \vect{y}
  =(\det A)  \vect{v}\times \vect{w}
  =\pm (\det A)  |\vect{v}|\, \vect{w} \in \mathcal{L}(Q),
$
which proves the assertion.
\end{proof}

Let $p\in \Sigma$ be a lightlike point of the admissible second kind.
Take a characteristic coordinate system $(U;u,v)$
centered at $p$.
Since 
$f_v(u,0)$ is a non-zero spacelike vector field (cf.\ \eqref{eq:2nd-null})
and 
$L(u)=df(\eta(u))$
is a lightlike vector field which is orthogonal to $f_v(u,0)$,
we may apply Lemma \ref{lem:gaiseki}.
Then, there exists $\sigma=\pm1$ such that 
$$
  f_v(u,0) \times L(u) = -\sigma \sqrt{G(u,0)}\,L(u)
$$
along the $u$-axis.
Substituting $L(u) = f_u(u,0)+\ep(u)f_v(u,0)$ into this,
we have
$
  f_u \times f_v = \sigma \sqrt{G}\,(f_u+\ep f_v)
$
along the $u$-axis.
Therefore, 
there exists a smooth 
vector field $\psi$ of $M^3$
along $f:U \to M^3$
such that
\begin{equation}\label{eq:psi-characteristic}
  f_u \times f_v = \sigma \sqrt{G}\,(f_u+\ep f_v) + v\, \psi
\end{equation}
holds on $U$.

\begin{lemma}\label{lem:kNG}
For $(u,0)\in Z^c$,
we have
\begin{align}
  \label{eq:kN-char}
  \kappa_N(u)
  &= \frac{\rho(u)^{1/3}}{\ep(u)^{8/3}G}
    \left(- \frac{\ep'(u)}{2} 
     - \frac{E_v(E_v + 2\ep'(u) G)}{4 \rho(u) G}
     + \ep(u) \nu_1(u)\right),\\
  \label{eq:kG-char}
  \kappa_G(u)
  &=  \frac{{\rm sgn}(\ep(u))}{\ep(u)^2\sqrt{G}}
  \left( 
  -\frac{\ep'(u)}{6}
     + \frac{E_v(E_v + \ep'(u) G)}{2 \rho(u) G}
    +\ep(u)g_1(u)
   \right),
\end{align}
where $E_v=E_v(u,0)$, $G=G(u,0)$,
and 
\begin{equation}\label{eq:rho}
  \rho(u) := E_v(u,0) +2\ep(u) F_v(u,0) + \ep(u)^2 G_v(u,0).
\end{equation}
\end{lemma}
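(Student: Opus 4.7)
The plan is to evaluate the formulas \eqref{eq:Lnormal-curvature-G} and \eqref{eq:Lgeodesic-torsion-G} from Proposition \ref{prop:curvature-G} along the characteristic curve $\gamma(u) = (u,0)$ with the null field $\eta(u) = \partial_u + \ep(u)\partial_v$, restricted to $Z^c$. First I would record the basic data: by \eqref{eq:EFG-u-axis} we have $E(u,0) = \ep^2 G$ and $F(u,0) = -\ep G$ (so that differentiating gives $F_u(u,0) = -\ep' G - \ep G_u$), whence $|\hat\gamma'(u)|^2 = \ep^2 G$; by \eqref{eq:beta} the normalizing factor is $\beta(u) = \ep(u)\rho(u)$; and Lemma \ref{lem:lambda_v} supplies $\inner{\nabla_u f_u}{f_u}$ and $\inner{\nabla_u f_u}{f_v}$.

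The main task is to evaluate $\inner{\nabla_u f_u}{N}$, since $N$ lies transverse to the degenerate tangent plane $df(T\Sigma)|_{(u,0)}$, whose orthogonal complement in $T_{f(u,0)}M^3$ is the null line $\mathrm{span}(L)$ itself. I would use the transverse smooth vector field $\psi$ supplied by the cross-product identity \eqref{eq:psi-characteristic} and expand $N = x f_u + y f_v + z\psi$ along the $u$-axis. The three conditions in \eqref{eq:L-normal}, namely $\inner{N}{f_u}=0$, $\inner{N}{L}=1$ (which forces $\inner{N}{f_v}=1/\ep$), and $\inner{N}{N}=0$, form a linear/quadratic system in $(x,y,z)$ whose coefficients involve $\ep$, $G$ and the smooth inner products $\inner{\psi}{f_u}|_{v=0}$, $\inner{\psi}{f_v}|_{v=0}$, $\inner{\psi}{\psi}|_{v=0}$; solving gives the leading asymptotics $x = -1/(2\ep^2 G) + O(1)$, $y = 1/(2\ep G) + O(1)$, $z = O(1)$. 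Substituting into $\inner{\nabla_u f_u}{N} = x(E_u/2) + y(F_u - E_v/2) + z\inner{\nabla_u f_u}{\psi}$ using Lemma \ref{lem:lambda_v} and $E_u = 2\ep\ep' G + \ep^2 G_u$, and then multiplying by $\beta^{1/3}/|\hat\gamma'|^2 = \rho^{1/3}/(\ep^{5/3}G)$, produces \eqref{eq:kN-char} after rearrangement using $\rho = E_v + 2\ep F_v + \ep^2 G_v$; every $\psi$-dependent contribution turns out to be smooth in $u$ and is absorbed into $\ep\nu_1(u)$.

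For $\kappa_G$ I would use the integration-by-parts identity $\inner{L}{\nabla_u N} = -\inner{\nabla_u L}{N}$ (from differentiating $\inner{L}{N}=1$) together with $\nabla_u L = \nabla_u f_u + \ep' f_v + \ep\nabla_u f_v$. The singular contribution $-\ep'\inner{f_v}{N} = -\ep'/\ep$ then partially cancels with the logarithmic-derivative term $(1/3)(\log|\beta|)' = (\ep'/\ep + \rho'/\rho)/3$ in \eqref{eq:Lgeodesic-torsion-G} to yield the coefficient $-\ep'/(6\ep)$ of \eqref{eq:kG-char}, while $-\inner{\nabla_u f_u}{N}$ supplies the $E_v(E_v+\ep' G)/(2\rho G)$ piece; dividing by $|\hat\gamma'| = |\ep|\sqrt{G}$ then produces the $\sgn(\ep)/(\ep^2\sqrt{G})$ prefactor, and all remaining contributions, including those from $\ep\nabla_u f_v$, are smooth and packaged into $\ep g_1(u)$.

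The main obstacle is the careful tracking of $\ep$-orders. The naive leading singular part of $\inner{\nabla_u f_u}{N}$ obtained from the asymptotics of $x, y$ is $-\ep'/\ep - E_v/(4\ep G)$, whereas the target form in \eqref{eq:kN-char} is $-\ep'/(2\ep) - E_v(E_v+2\ep'G)/(4\ep\rho G)$; these must be reconciled modulo smooth corrections using $E_v/\rho = 1 + O(\ep)$ and $\rho - E_v = 2\ep F_v + \ep^2 G_v$, so that both the apparent $\ep'/(2\ep)$ discrepancy and the difference in the $E_v$-coefficient collapse to order $O(1)$ and fit cleanly into the smooth remainder $\nu_1(u)$; the same mechanism handles the $g_1(u)$ term for $\kappa_G$.
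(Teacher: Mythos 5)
Your overall strategy is exactly the paper's: expand $N$ in the frame $\{f_u,f_v,\psi\}$ using the cross-product identity \eqref{eq:psi-characteristic}, solve the system coming from \eqref{eq:L-normal}, and feed the result into Proposition \ref{prop:curvature-G}. For $\kappa_N$ your computation closes: the exact coefficients are $c_1=-1/(2\ep^2G)$, $c_2=(E_v-\ep^2G_v)/(2\ep\rho G)$, $c_3=-1/(\sigma\rho\sqrt{G})$, and your reconciliation of the ``naive'' singular part $-\ep'/\ep-E_v/(4\ep G)$ with the stated form via $E_v/\rho=1+\ep\cdot(\text{smooth})$ is precisely what the paper does implicitly; all discrepancies are of the form $\ep\cdot(\text{smooth})$ because $\rho(0)=E_v(0,0)\ne0$.

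However, the $\kappa_G$ part contains a genuine error: you assert that the contribution of $\ep\nabla_u f_v$ to $\inner{\nabla_u L}{N}$ is ``smooth and packaged into $\ep g_1(u)$.'' It is not. Since $\inner{\nabla_u f_v}{f_u}=\tfrac12 E_v$ is nonvanishing near $u=0$ and the $f_u$-component of $N$ is $-1/(2\ep^2G)$, one has
\begin{equation*}
  \ep\inner{\nabla_u f_v(u,0)}{N(u)} \;=\; -\frac{E_v}{4\ep G}\;+\;(\text{smooth}),
\end{equation*}
which is singular of order $1/\ep$ (this is the content of \eqref{eq:fuv-N}). This singular piece is essential: combined with the $E_v(E_v+2\ep'G)/(4\ep\rho G)$ coming from $-\inner{\nabla_u f_u}{N}$, it doubles the $E_v^2$-coefficient and produces the stated $E_v(E_v+\ep'G)/(2\rho G)$. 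If you drop it, you obtain $E_v(E_v+2\ep'G)/(4\rho G)$ instead, which differs from the correct bracket by $E_v^2/(4\rho G)$ --- a quantity that does \emph{not} vanish at $u=0$ and hence cannot be absorbed into $\ep g_1(u)$. (A smaller imprecision: the coefficient $-\ep'/6$ does not come only from the cancellation of $-\ep'\inner{f_v}{N}=-\ep'/\ep$ against $\tfrac13(\log|\beta|)'$; that cancellation leaves $-2\ep'/(3\ep)$, and the remaining $+\ep'/(2\ep)$ must be extracted from $-\inner{\nabla_u f_u}{N}$.) To repair the argument, compute $\ep^2\inner{\nabla_u f_v}{N}$ explicitly as in \eqref{eq:fuv-N} and carry its leading term $-E_v/(4G)$ through the bookkeeping.
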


\begin{proof}
Let $N(u)$ be the vector field $N(u)$ of $M^3$
along $\hat{\gamma}(u):=f(u,0)$
such that 
\begin{equation}\label{eq:N(u)}
  \inner{N(u)}{\hat{\gamma}'(u)}
  =\inner{N(u)}{N(u)}=0,\quad
  \inner{N(u)}{L(u)}=1
\end{equation}
holds on $Z^c$
(cf.\ \eqref{eq:L-normal}).
First, we calculate $N(u)$.

Applying the division lemma\footnote{%
For example, see \cite[Appendix A]{UY_geloma}.} 
to \eqref{eq:EFG-u-axis},
there exists smooth functions $\tilde{E}$, $\tilde{F}$
such that 
$E=\ep^2G+v\,\tilde{E}$ and
$F=-\ep\,G+v\,\tilde{F}$
on a neighborhood of $p=(0,0)$.
Differentiating these identities with respect to $v$,
we have
\begin{align}\label{eq:tilde-E}
  &\tilde{E}(u,0)= E_v(u,0) - \ep(u)^2 G_v(u,0),\\
\label{eq:tilde-F}
  &\tilde{F}(u,0)= F_v(u,0) + \ep(u) G_v(u,0).
\end{align}
By \eqref{eq:psi-characteristic},
we have 
$v\inner{f_u}{\psi}=-\sigma \sqrt{G} (E+\ep F)
=-\sigma v \sqrt{G} (\tilde{E} + \ep \tilde{F})$.
Hence, 
$
\inner{f_u}{\psi}  = -\sigma \sqrt{G} (\tilde{E} + \ep \tilde{F})
$
holds on $U$.
Similarly, we have
$\inner{f_v}{\psi}  = -\sigma \sqrt{G}\tilde{F}$ and
$\inner{\psi}{\psi}  = (\tilde{F})^2$.
By \eqref{eq:tilde-E} and \eqref{eq:tilde-F}, it follows that 
\begin{align}
\label{eq:fu-psi}
  &\inner{f_u}{\psi} = -\sigma \sqrt{G} (E_v+\ep F_v),\\
\label{eq:fv-psi}
  &\inner{f_v}{\psi} = -\sigma \sqrt{G} (F_v+\ep G_v),\\
\label{eq:psi-psi}
  &\inner{\psi}{\psi} = (F_v+\ep G_v)^2
\end{align}
along the $u$-axis.

Set
$N(u) = c_1(u) f_u(u,0) + c_2(u) f_v(u,0) +c_3(u) \psi(u,0)$.
Substituting this into \eqref{eq:N(u)},
we have
\begin{gather*}
  \sigma(E_v+\ep F_v) c_3 = \ep \sqrt{G}(\ep c_1 -c_2),\qquad
  -c_3 \sigma \sqrt{G} \rho=1\\
  0= G(\ep c_1 -c_2)^2 
     -2 \sigma \sqrt{G}(c_1(E_v
     +\ep F_v)+c_2(F_v+\ep G_v))c_3
     +(F_v+\ep G_v)^2 (c_3)^2,
\end{gather*}
where we used 
\eqref{eq:fu-psi}, \eqref{eq:fv-psi} and \eqref{eq:psi-psi}.
Solving this, we obtain
\begin{equation}\label{eq:N-chara}
  N(u) 
  = -\frac{1}{2\ep^2G} f_u
     + \frac{E_v-\ep^2 G_v}{2\ep \rho G} f_v
     - \frac{1}{\sigma\rho\sqrt{G}} \psi,
\end{equation}
where the right hand side is evaluated on $Z^c$
and $\rho(u)$ is the smooth function on the $u$-axis
defined by \eqref{eq:rho}.

By Proposition \ref{prop:curvature-G}, we have
$
  \kappa_N(u)
  = \beta(u)^{1/3}
  \inner{\nabla_u f_u(u,0)}{N(u)}/|f_u(u,0)|^2,
$
where
$\beta(u) = \eta\inner{\eta f}{\eta f}|_{\gamma(u)}
=\ep(u)\, \rho(u)$ (cf.\ \eqref{eq:beta}), 
and $\rho(u)$ is the function defined by
\eqref{eq:rho}.
By \eqref{eq:N-chara} and Lemma \ref{lem:lambda_v}, 
we have
{\allowdisplaybreaks
\begin{align}
\nonumber
  \ep(u)\inner{\nabla_u f_u(u,0)}{N(u)}
  &= -\biggl(\frac{2\ep' G + \ep G_u}{4G}
     + \ep\frac{\inner{\nabla_u f_u}{\psi}}{\sigma\rho\sqrt{G}}\\
\nonumber
  &\hspace{1.8cm}
     + \left.\frac{(E_v - \ep^2 G_v)(E_v
     +2\ep' G +2\ep G_u )}{4 \rho G}
     \biggr)\right|_{v=0}\\
\label{eq:fuu-N}
  &= - \frac{\ep'(u)}{2} 
     - \frac{E_v(E_v + 2\ep'(u) G)}{4 \rho(u) G}
     + \ep(u) \nu_1(u),
\end{align}}%
where $E_v=E_v(u,0)$, $G=G(u,0)$, 
and $\nu_1(u)$ is a smooth function defined on the $u$-axis.
By \eqref{eq:EFG-u-axis},
$
  |f_u(u,0)|^2 
  = E(u,0)=\ep(u)^2G(u,0)
$
holds.
Thus, we have
$$
  \ep(u)^{8/3}\kappa_N(u)
  = \frac{\rho(u)^{1/3}}{G(u,0)}
    \left(- \frac{\ep'(u)}{2} 
     - \frac{E_v(E_v + 2\ep'(u) G)}{4 \rho(u) G}
     + \ep(u) \nu_1(u)\right),
$$
which implies \eqref{eq:kN-char}.

With respect to $\kappa_G$,
we have
$
  \kappa_G(u)
  = \left( \inner{L(u)}{\nabla_u N(u)} 
         +\beta'(u)/(3\beta(u)) 
   \right)/|\hat{\gamma}'(u)|,
$
by Proposition \ref{prop:curvature-G}.
Since $\nabla_u L(u) = \nabla_u f_u +\ep' \,f_v +\ep\nabla_u f_v$
holds along the $u$-axis,
we have
\begin{multline}\label{eq:num-kG}
  \inner{\nabla_u L(u)}{N(u)}
  = \inner{\nabla_u f_u(u,0)}{N(u)} \\
   +\ep'(u) \,\inner{f_v(u,0)}{N(u)} 
   +\ep(u)\inner{\nabla_u f_v(u,0)}{N(u)}.
\end{multline}
By \eqref{eq:N(u)},
we obtain
\begin{equation}\label{eq:num-kG-2}
  \ep(u)\inner{f_v(u,0)}{N(u)}
  = 1.
\end{equation}
Now, by \eqref{eq:N-chara} and Lemma \ref{lem:lambda_v}
\begin{align}
\nonumber
  \ep(u)^2\inner{\nabla_u f_v(u,0)}{N(u)}
  &= \left.\left(-\frac{E_v}{4G}
    +\ep
      \frac{G_u(E_v-\ep^2 G_v) 
            - 4\sigma\sqrt{G}\inner{\nabla_u f_v}{\psi}}{4\rho G}
    \right)\right|_{v=0}\\
\label{eq:fuv-N}
  &= -\frac{E_v(u,0)}{4G(u,0)}
    -\ep(u)\nu_2(u)
\end{align}
holds,
where $\nu_2(u)$ is a smooth function defined on the $u$-axis.
Using 
$
  \inner{L(u)}{\nabla_u N(u)}=-\inner{\nabla_u L(u)}{N(u)},
$
and substituting \eqref{eq:fuu-N},
\eqref{eq:num-kG-2},
\eqref{eq:fuv-N}
into \eqref{eq:num-kG},
we have
\begin{multline*}
  \ep(u)\inner{L(u)}{\nabla_u N(u)}
  = -\frac{\ep'(u)}{2}
     + \frac{E_v(E_v+\rho(u) 
     + 2\ep'(u) G)}{4 \rho(u) G}\\
    +\ep(u)(\nu_2(u)-\nu_1(u)).
\end{multline*}
Since
$
  \ep\beta'/\beta
  = \ep'+\ep\rho'/\rho
$
and $\rho(u)=E_v + \ep(2F_v+\ep G_v)$,
we have
$$
  \ep|\ep|\kappa_G(u)
  = \frac{1}{\sqrt{G}}
  \left( 
  -\frac{\ep'(u)}{6}
     + \frac{E_v(E_v + \ep'(u) G)}{2 \rho(u) G}
    +\ep(u)g_1(u)
   \right),
$$
where $g_1(u)$ is a smooth function defined on the $u$-axis.
\end{proof}

\begin{proof}[Proof of the assertion {\rm (ii)} of Theorem \ref{thm:introA}]
As in the proof of the assertion {\rm (i)} of Theorem \ref{thm:introA},
take a characteristic coordinate system $(U;u,v)$ 
centered at $p$,
and a sequence $\{p_n\}_{n\in \N} \subset Z^c$
such that $\lim_{n\to \infty}p_n=p$.
We may write $p_n=(u_n,0)\,(\in Z^c)$.
By Lemma \ref{lem:kNG}, we have
$\ep(u)^{8/3}\kappa_N(u) = r_N(u)$,
where we set
$$
  r_N(u):=\left.
    \frac{\rho(u)^{1/3}}{G}
    \left(- \frac{\ep'(u)}{2} 
     - \frac{E_v(E_v + 2\ep'(u) G)}{4 \rho(u) G}
     + \ep(u) \nu_1(u)\right)\right|_{v=0}.
$$
Since $\ep(0)=0$ and $\rho(0)=E_v(0,0)$ by \eqref{eq:rho}, 
it holds that 
$$
  r_N(0)
  = -\frac{E_v(0,0)^{4/3}}{4 G(0,0)^2} - \ep'(0)\frac{E_v(0,0)^{1/3}}{G(0,0)}.
$$
First, assume that $p$ is not an $L_3$-point.
Then, $\ep'(0)=0$ holds.
Since $u_n$ converges to $0$ as $n\to \infty$,
we have
$$
  \lim_{n\to \infty} \ep(u_n)^{8/3}\kappa_N(p_n)
  = r_N(0)
  = -\frac{E_v(0,0)^{4/3}}{4 G(0,0)^2}.
$$
Hence, $\kappa_N(p_n)$ diverges to $-\infty$ as $n\to \infty$.
Next, let us assume that $p$ is an $L_3$-point.
Since $\ep(0)=0$ and $\ep'(0)\ne0$ hold,
there exist a smooth function $\ep_0(u)$
such that
$\ep(u) = u\, \ep_0(u)$ $(\ep_0(0)\ne0)$ holds.
On the other hand, 
since $r_N(u)$ is a smooth function on the $u$-axis,
there exist an integer $k\in \Z$ and 
a smooth function $\nu_0(u)$
such that $r_N(u) = u^k \nu_0(u)$ holds.
Then, on $Z^c$, we have
$$
  \kappa_N(u_n)
  = u^{k-\frac8{3}} \frac{\nu_0(u_n)}{\sqrt[3]{\ep_0(u_n)^8}}.
$$
Since $k-\frac8{3}$ is not an integer,
$\kappa_N(u_n)$ tends to $0$ or diverges as $n\to \infty$,
which gives the desired result.
\end{proof}

Similarly, we have the following result 
for the lightlike geodesic torsion $\kappa_G$.

\begin{theorem}\label{thm:kG}
Let $f:\Sigma\to M^3$ be a mixed type surface
in a Lorentzian $3$-manifold $M^3$
and $p\in \Sigma$ a lightlike point 
of the admissible second kind.
If $p$ is not an $L_3$-point, then
the lightlike geodesic torsion $\kappa_G$ 
is unbounded at $p$.
\end{theorem}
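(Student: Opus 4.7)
The plan is to extract the statement directly from the formula for $\kappa_G$ obtained in Lemma \ref{lem:kNG}, following the same bookkeeping that gave the assertion (ii) of Theorem \ref{thm:introA} for $\kappa_N$.

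First I would take a characteristic coordinate system $(U; u, v)$ centered at $p$ with null vector field $\eta(u) = \partial_u + \ep(u) \partial_v$ along $\gamma(u) = (u,0)$. In these coordinates, $\det(\gamma'(u), \eta(u)) = \ep(u)$, so by Definition \ref{def:types-LPT} the hypotheses ``admissible second kind but not $L_3$'' translate into $\ep(0) = \ep'(0) = 0$ together with $\ep \not\equiv 0$. In particular, there exists a sequence $\{p_n = (u_n, 0)\} \subset Z^c$ (see \eqref{eq:zeroset}) with $u_n \to 0$.

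Next I would rewrite the formula \eqref{eq:kG-char} as
\[
  \ep(u)^2 \sqrt{G(u,0)}\, \sgn(\ep(u))\, \kappa_G(u)
  = -\frac{\ep'(u)}{6} + \frac{E_v(E_v + \ep'(u)G)}{2\,\rho(u)\, G} + \ep(u)\, g_1(u),
\]
where $\rho(u) = E_v(u,0) + 2\ep(u) F_v(u,0) + \ep(u)^2 G_v(u,0)$ and $E_v = E_v(u,0)$, $G = G(u,0)$. Evaluating the right-hand side at $u=0$, the hypotheses $\ep(0)=\ep'(0)=0$ and $\rho(0)=E_v(0,0)$ collapse it to $E_v(0,0)/(2G(0,0))$. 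By Lemma \ref{lem:lambda_v} we have $E_v(0,0) \neq 0$, and by \eqref{eq:EFG-u-axis} we have $G(0,0) > 0$, so this limit is a nonzero real number.

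Finally, since $\ep(u_n) \to 0$ while the right-hand side above converges to a nonzero constant, the product $\ep(u_n)^2 \kappa_G(p_n)$ stays bounded away from $0$, which forces $|\kappa_G(p_n)| \to \infty$ and shows that $\kappa_G$ is unbounded at $p$. The main work is absorbed into Lemma \ref{lem:kNG}; the only delicate point worth highlighting is that ``not $L_3$'' is precisely the condition ($\ep'(0)=0$) that makes the $\ep'(u)/6$ term drop out at the origin, so that the $E_v(E_v+\ep'G)/(2\rho G)$ contribution survives and produces a nonzero limit rather than allowing an unexpected cancellation.
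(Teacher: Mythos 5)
Your proposal is correct and follows essentially the same route as the paper: both rely on the formula \eqref{eq:kG-char} from Lemma \ref{lem:kNG}, note that the non-$L_3$ hypothesis means $\ep'(0)=0$, and conclude that $\ep(u_n)|\ep(u_n)|\kappa_G(p_n)$ tends to the nonzero limit $E_v(0,0)/(2G(0,0)^{3/2})$, forcing $\kappa_G$ to be unbounded. The only differences are cosmetic (writing $\ep^2\sgn(\ep)$ for $\ep|\ep|$ and moving the factor $\sqrt{G}$ across the equation).
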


\begin{proof}
Take a characteristic coordinate system $(U;u,v)$ 
centered at $p$,
and a sequence $\{p_n\}_{n\in \N} \subset Z^c$
such that $\lim_{n\to \infty}p_n=p$.
We may write $p_n=(u_n,0)\,(\in Z^c)$.
By Lemma \ref{lem:kNG}, we have
$\ep(u)|\ep(u)|\kappa_G(u) = r_G(u)$,
where we set
$$
  r_G(u):=\left.
    \frac{1}{\sqrt{G}}
  \left(  -\frac{\ep'(u)}{6}
     + \frac{E_v(E_v + \ep'(u) G)}{2 \rho(u) G}
    +\ep(u)g_1(u)
   \right)\right|_{v=0}.
$$
Since 
$$
  r_G(0)=\frac{1}{\sqrt{G(0,0)}}
   \left( \frac{E_v(0,0)}{2 G(0,0)}+ \frac{1}{3}\ep'(0)
   \right),
$$
if $p$ is not an $L_3$-point, 
namely, if $\ep'(0)=0$,
then
$$
  \lim_{n\to \infty} \ep(u_n)|\ep(u_n)|\kappa_G(p_n) 
  = \frac{E_v(0,0)}{2 G(0,0)^{3/2}} \,(\ne0),
$$
which gives the desired result.
\end{proof}

As a corollary of Theorem \ref{thm:introA},
we have the following.

\begin{corollary}
Let $p\in \Sigma$ be 
a lightlike point of the second kind,
and let $\gamma(t)$ $(|t|<\ep)$ be 
a characteristic curve passing through $p=\gamma(0)$.
Assume that 
$p$ is not an $L_3$-point,
and 
$\gamma(t)$ is of the first kind for each $t\ne0$.
Then, the curvature vector field $R(t)$
{\rm (}cf.\ \eqref{eq:curvature-vf}{\rm )}
of $\hat{\gamma}(t)=f\circ \gamma(t)$
is spacelike for sufficiently small $t\ne0$.
\end{corollary}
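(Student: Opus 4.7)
The plan is to combine Corollary \ref{cor:product} with Theorem \ref{thm:introA}, both of which have already been established in the paper. First, I would verify that $p$ falls under the hypotheses of Theorem \ref{thm:introA}, namely that it is of the admissible second kind. By Definition \ref{def:types-LPT}, admissibility requires a sequence of lightlike points of the first kind converging to $p$. Under the standing assumption that $\gamma(t)$ is of the first kind for every $t\ne 0$, any sequence $t_n\to 0$ with $t_n\ne 0$ produces $p_n:=\gamma(t_n)\to p$ of the required type. Combined with the hypothesis that $p$ itself is of the second kind, this shows $p$ is of the admissible second kind; and by hypothesis $p$ is not an $L_3$-point.

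Next I would invoke both assertions of Theorem \ref{thm:introA}. Assertion (i) yields
\[
  \lim_{t\to 0}\kappa_L(\gamma(t))=-\infty,
\]
and assertion (ii), together with the hypothesis that $p$ is not an $L_3$-point, upgrades the ``$\kappa_N$ converges to $0$ or diverges to $\pm\infty$'' statement to
\[
  \lim_{t\to 0}\kappa_N(\gamma(t))=-\infty.
\]

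Finally, by Corollary \ref{cor:product}, the causal curvature function of $\hat{\gamma}(t)$ satisfies
\[
  \theta(t)=\inner{R(t)}{R(t)}=2\,\kappa_L(t)\,\kappa_N(t)
\]
for each $t\ne 0$ (where $\gamma(t)$ is of the first kind so the invariants are defined). Since both $\kappa_L(t)$ and $\kappa_N(t)$ tend to $-\infty$, their product tends to $+\infty$, so there exists $\delta>0$ such that $\theta(t)>0$ for all $0<|t|<\delta$. In particular $R(t)\ne\vect{0}$ and $\inner{R(t)}{R(t)}>0$, which is the definition of spacelike given in Section \ref{sec:prelim}.

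There is essentially no serious obstacle, since all of the substantive analytic content is contained in Theorem \ref{thm:introA} and Corollary \ref{cor:product}; the only subtlety is the bookkeeping step of recognizing that the hypothesis ``$\gamma(t)$ is of the first kind for every $t\ne 0$'' is exactly what is needed both to make $p$ admissible (so that Theorem \ref{thm:introA} applies) and to ensure that the identity $\theta(t)=2\kappa_L(t)\kappa_N(t)$ from Corollary \ref{cor:product} is available along the full punctured neighborhood of $0$.
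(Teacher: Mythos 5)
Your proposal is correct and follows essentially the same route as the paper: the paper's proof likewise combines Corollary \ref{cor:product} with Theorem \ref{thm:introA} to conclude that $\theta(t)=2\kappa_L(t)\kappa_N(t)$ diverges to $+\infty$, hence is positive for small $t\ne0$. Your extra verification that the hypotheses force $p$ to be of the admissible second kind is a correct (and implicitly assumed) bookkeeping step.
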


\begin{proof}
Let $\theta(t)$ be
the causal curvature function $\theta(t):=\inner{R(t)}{R(t)}$.
Corollary \ref{cor:product} yields
$\theta(t)=2\kappa_L(t)\kappa_N(t)$.
Together with Theorem \ref{thm:introA},
we have that $\theta(t)$ diverges to $+\infty$ at $0$.
Hence, $\theta(t)>0$ for sufficiently small $t\ne0$.
\end{proof}

\subsection{Behavior of the balancing curvature}

Finally, we investigate the behavior of 
the balancing curvature $\kappa_B$ at 
a lightlike point of the admissible second kind.

Let $p\in \Sigma$ be a lightlike point of the admissible second kind.
Take a characteristic coordinate system $(U;u,v)$
centered at $p$.
The set $Z^c$ defined by \eqref{eq:zeroset}
consists of lightlike points of the first kind.
Fix a point $p_0=(u_0,0)\in Z^c$ such that 
$|u_0|$ is sufficiently small.
Since $\ep(u_0)>0$, 
\begin{equation}\label{eq:XY}
  x:=u-\frac1{\ep(u)}v,\qquad
  y:=v
\end{equation}
defines a new coordinate on a neighborhood of $p_0$.

\begin{lemma}\label{lem:char-adapt}
The coordinates $(x,y)$ given in \eqref{eq:XY}
is an adapted coordinate system 
{\rm (}cf.\ Definition {\rm \ref{def:adapted})}.
Moreover, for a smooth function $h=h(u,v)$,
we have 
\begin{align}
\label{eq:Hy}
  &h_y(u,0)= \frac1{\ep} h_u + h_v\\
\label{eq:Hxy}
  &h_{xy}(u,0)= -\frac{\ep'}{\ep^2}h_u +\frac1{\ep}h_{uu}+ h_{uv}\\
\label{eq:Hyy}
  &h_{yy}(u,0)= -\frac{2\ep'}{\ep^3}h_u 
             +\frac1{\ep^2}h_{uu} +\frac2{\ep}h_{uv}+h_{vv},
\end{align}
where the right hand sides are evaluated on $Z^c$.
\end{lemma}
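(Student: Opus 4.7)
The plan is to verify the two adaptedness conditions on $(x,y)$ and then derive the three chain-rule formulas by computing the Jacobian and the second partial derivatives of the inverse coordinate change along the $u$-axis.

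Since $p_0 = (u_0, 0) \in Z^c$, the function $\ep(u)$ is non-vanishing on a neighborhood of $u_0$, so $(x,y)$ given by \eqref{eq:XY} is smooth there. Direct differentiation of \eqref{eq:XY} yields
\[
\det \begin{pmatrix} x_u & x_v \\ y_u & y_v \end{pmatrix}
= \det \begin{pmatrix} 1 + v\,\ep'(u)/\ep(u)^2 & -1/\ep(u) \\ 0 & 1 \end{pmatrix}
= 1 + \frac{v\,\ep'(u)}{\ep(u)^2},
\]
which equals $1$ on the $u$-axis, so $(x,y)$ defines a local coordinate system centered at $p_0$. The second relation in \eqref{eq:XY} shows $\{y=0\}=\{v=0\}$, so the $x$-axis coincides with $LD$ near $p_0$. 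Inverting the Jacobian on the $u$-axis gives
\[
u_x(x,0) = 1, \quad u_y(x,0) = \frac{1}{\ep(u)}, \quad v_x(x,0) = 0, \quad v_y(x,0) = 1,
\]
so by the chain rule $\partial_y|_{(x,0)} = (1/\ep)\partial_u + \partial_v = (1/\ep)\,\eta$, a nonzero scalar multiple of $\eta$. Hence $\partial_y$ is null along $LD$ and $(x,y)$ is adapted in the sense of Definition \ref{def:adapted}.

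For the three differentiation formulas, I view $h$ as a function of $(x,y)$ through the inverse map and apply the chain rule. The first-order formula \eqref{eq:Hy} is immediate from $h_y = h_u u_y + h_v v_y$ evaluated on $y = 0$ using the values above. For \eqref{eq:Hxy} and \eqref{eq:Hyy}, I need $u_{xy}(x,0)$ and $u_{yy}(x,0)$, while $v = y$ makes $v_{xy} = v_{yy} = 0$. Differentiating the identity $u_x\bigl(1+y\ep'(u)/\ep(u)^2\bigr)=1$ implicitly with respect to $y$ and restricting to $y = 0$ yields $u_{xy}(x,0) = -\ep'/\ep^2$; similarly, differentiating the explicit formula $u_y = (1/\ep)/(1+y\ep'/\ep^2)$ in $y$ and setting $y = 0$ gives $u_{yy}(x,0) = -2\ep'/\ep^3$. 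Substituting these together with the first-order Jacobian values into the standard second-order chain rule expressions
\[
h_{xy} = h_{uu} u_x u_y + h_{uv}(u_x v_y + u_y v_x) + h_{vv} v_x v_y + h_u u_{xy} + h_v v_{xy},
\]
\[
h_{yy} = h_{uu} u_y^2 + 2 h_{uv} u_y v_y + h_{vv} v_y^2 + h_u u_{yy} + h_v v_{yy}
\]
and simplifying produces \eqref{eq:Hxy} and \eqref{eq:Hyy} respectively.

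The argument is essentially routine bookkeeping; the only step requiring a little care is the implicit differentiation yielding $u_{xy}(x,0)$ and $u_{yy}(x,0)$, since $\ep$ depends on $u$ and $u$ itself depends on $(x,y)$, so one must track both sources of $y$-dependence before evaluating at $y=0$.
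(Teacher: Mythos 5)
Your proposal is correct and follows essentially the same route as the paper: compute the Jacobian of \eqref{eq:XY}, invert it (the paper writes the inverse globally via $\Delta:=1+(\ep'/\ep^2)v$, you restrict to the axis and recover the second-order data by implicit differentiation), identify $\partial_y|_{v=0}=\ep^{-1}\eta$ to get adaptedness, and then apply the chain rule; the values $u_{xy}(x,0)=-\ep'/\ep^2$ and $u_{yy}(x,0)=-2\ep'/\ep^3$ check out. The paper leaves \eqref{eq:Hy}--\eqref{eq:Hyy} as ``a direct calculation,'' which you have simply carried out explicitly.
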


\begin{proof}
We set
$$
 \Delta := 1+\frac{\ep'}{\ep^{2}} v.
$$
Since
$
  x_u = \Delta,~
  x_v = -1/\ep,~
  y_u=0,~
  y_v=1,
$
we have
$
  u_x(u,v) = 1/\Delta,~
  u_y(u,v) = 1/(\ep\Delta),~
  v_x(u,v)=0,~
  v_y(u,v)=1,
$
and hence,
$
\partial_x = (1/{\Delta})\partial_u,~
\partial_y = (1/{\ep\Delta})\partial_u+\partial_v
$
holds.
On the $u$-axis 
$\partial_x = \partial_u$,
$\partial_y = \ep^{-1}\partial_u + \partial_v$
holds.
Thus, we have that
the $x$-axis is the singular set, 
and 
$\partial_y$ is in the null direction
on the the singular set,
in particular, $(x,y)$ is an adapted coordinate system.
By a direct calculation,
we have the formulas \eqref{eq:Hy}, \eqref{eq:Hxy} and \eqref{eq:Hyy}. 
\end{proof}

Let $\hat{E}$, $\hat{F}$, $\hat{G}$
be the functions as \eqref{eq:1FF-hat}.
By Proposition \ref{prop:kappa_B_adapted},
the balancing curvature $\kappa_B(u)$
is written as
\begin{equation}\label{eq:kB-def}
  \kappa_B(u)
  =   \frac{-1}{2\hat{E}^2 (\hat{G}_y)^{\frac{5}{3}}}
        \left( A_1 -\frac1{5}A_2\right)
\end{equation}
for each $(u,0)\in Z^c$,
where we set
\begin{align}
\label{eq:A1}
  &A_1(u):= \hat{G}_y \left(\hat{E} \hat{E}_{yy} 
         - 2 \hat{E}\hat{F}_{xy} 
        + \hat{E}_x \hat{F}_y\right),\\
\label{eq:A2}
  &A_2(u):= \hat{E}_y \left(\hat{E} \hat{G}_{yy} 
  - 2 (\hat{F}_y)^2\right).
\end{align}

\begin{lemma}\label{lem:kB-prepare}
Set $b_0(u):=\ep'(u) G(u,0) -E_v(u,0)$.
Then, we have
{\allowdisplaybreaks
\begin{align}
\label{eq:order12}
  \hat{E}(u,0)&=\ep^2 G,\qquad
  \hat{E}_x(u,0)=\ep^2 G_u + 2\ep \ep' G,\\
\label{eq:order-10}
  \hat{E}_y(u,0) &=E_u + \ep G_u,\quad
  \hat{F}_y(u,0)=\frac1{\ep}\left( -b_0(u) +\ep F_v\right).
\end{align}
Moreover,} there exist smooth functions $\tau_i(u)$ 
on the $u$-axis $(i=1,2,3,4)$,
such that
{\allowdisplaybreaks
\begin{align}
\label{eq:order-2}
  \hat{E}_{yy}(u,0) 
  &= \frac1{\ep^2}\left( 4\ep'b_0(u) + \ep \tau_1(u) \right),\\
\label{eq:order-2b}
  \hat{F}_{xy}(u,0) 
  &= \frac1{\ep^2}\left( \ep'b_0(u) + \ep \tau_2(u) \right),\\
\label{eq:order-2c}
  \hat{G}_{y}(u,0) 
  &= \frac1{\ep^2}\left( E_v + \ep \tau_3(u) \right),\\
\label{eq:order-4}
  \hat{G}_{yy}(u,0) 
  &= \frac1{\ep^4}\left( 2\ep'(\ep' G -4E_v) + \ep \tau_4(u) \right)
\end{align}
holds.}
\end{lemma}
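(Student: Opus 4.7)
The strategy is to use the explicit expressions \eqref{eq:Exy}--\eqref{eq:Gxy} for $\hat{E}, \hat{F}, \hat{G}$ in the coordinates $(x,y)$ of \eqref{eq:XY}, differentiate them via the chain rule, and substitute the boundary data \eqref{eq:EFG-u-axis}. First I would assemble the Jacobian of the coordinate change. From $x = u - v/\ep(u)$ and $v = y$ we obtain $v_x \equiv 0$, $v_y \equiv 1$, $u_x = 1/\Delta$ and $u_y = 1/(\ep \Delta)$, with $\Delta := 1 + \ep'(u) v / \ep(u)^2$. On the $u$-axis $\Delta = 1$, and further differentiation yields $u_{xx}(u,0) = 0$, $u_{xy}(u,0) = -\ep'/\ep^2$, $u_{yy}(u,0) = -2\ep'/\ep^3$, and $u_{xyy}(u,0) = 6(\ep')^2/\ep^4 - 2\ep''/\ep^3$. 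Because $v_x \equiv 0$ and $v_y \equiv 1$, the formulas \eqref{eq:Exy}--\eqref{eq:Gxy} simplify to $\hat{E} = E u_x^2$, $\hat{F} = E u_x u_y + F u_x$, and $\hat{G} = E u_y^2 + 2 F u_y + G$.

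The zeroth- and first-order identities \eqref{eq:order12}--\eqref{eq:order-10} then follow by direct substitution. For example, $\hat{E}(u,0) = E u_x^2 = \ep^2 G$ by \eqref{eq:EFG-u-axis}, and $\hat{E}_x(u,0) = (E_u u_x + E_v v_x) u_x^2 + 2 E u_x u_{xx}$ collapses to $E_u(u,0) = 2\ep\ep' G + \ep^2 G_u$ using $u_x(u,0)=1$, $u_{xx}(u,0)=0$, $v_x(u,0)=0$. Differentiating $\hat{E}$ in $y$ and invoking \eqref{eq:Hy} yields $\hat{E}_y(u,0) = (E_u/\ep + E_v) - 2E\ep'/\ep^2$, which after substitution of $E(u,0) = \ep^2 G$ and $E_u(u,0) = 2\ep\ep' G + \ep^2 G_u$ produces the form stated in \eqref{eq:order-10}. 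The formula for $\hat{F}_y(u,0)$ follows by the same procedure: the six chain-rule contributions telescope, after $E, F, E_u, F_u$ are restricted to the $u$-axis, into $(1/\ep)(-\ep' G + E_v + \ep F_v) = (-b_0 + \ep F_v)/\ep$.

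The second-order identities \eqref{eq:order-2}--\eqref{eq:order-4} are the heart of the lemma. For each of $\hat{E}_{yy}$, $\hat{F}_{xy}$, $\hat{G}_y$, $\hat{G}_{yy}$ I would expand by Leibniz, use \eqref{eq:Hxy} and \eqref{eq:Hyy} to evaluate $\partial_y$ and $\partial_y^2$ acting on $E, F, G$ along the $u$-axis, and plug in the explicit values of $u_{xy}, u_{yy}, u_{xyy}$ derived above. After replacing every $u$-derivative of $E$ and $F$ at $(u,0)$ by what it must be from \eqref{eq:EFG-u-axis} (for instance $E_{uu}(u,0) = 2(\ep')^2 G + 2\ep \ep'' G + 4\ep \ep' G_u + \ep^2 G_{uu}$), the nominally divergent contributions of order $1/\ep^4$ (for $\hat{G}_{yy}$) or $1/\ep^2$ (for $\hat{E}_{yy}, \hat{F}_{xy}, \hat{G}_y$) collapse onto integer multiples of $\ep' b_0$. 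As a prototype, for $\hat{E}_{yy}(u,0)$ the aggregate coefficient of $1/\ep^2$ is $(-12 + 2 + 14)(\ep')^2 G - 4\ep' E_v = 4\ep'(\ep' G - E_v) = 4\ep' b_0$, which is precisely the leading term of \eqref{eq:order-2}. The residual contributions are polynomial expressions in $E, F, G, \ep, \ep', \ep''$ and their derivatives at $(u,0)$, hence smooth in $u$, and are absorbed into the $\tau_i(u)$.

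The main obstacle is purely combinatorial: each second-order derivative produces a long sum of chain-rule terms, and the delicate cancellations that isolate the $b_0$-factor in the leading singular coefficient must be verified by hand for each of \eqref{eq:order-2}--\eqref{eq:order-4}. No conceptual ingredient beyond Lemma \ref{lem:char-adapt} and \eqref{eq:EFG-u-axis} is required.
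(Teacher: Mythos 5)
Your proposal is correct and follows essentially the same route as the paper: both compute the Jacobian of the coordinate change \eqref{eq:XY}, write $\hat E,\hat F,\hat G$ via \eqref{eq:Exy}--\eqref{eq:Gxy}, and expand the derivatives along the $u$-axis using Lemma \ref{lem:char-adapt} and \eqref{eq:EFG-u-axis}, the only difference being that the paper organizes the bookkeeping through $y$-derivatives of $\Delta=1+\ep' v/\ep^{2}$ and $\ep$ where you track $u_{xy}$, $u_{yy}$, $u_{xyy}$ directly. One remark: your computed value $\hat E_y(u,0)=E_v+\ep G_u$ agrees with what the paper's own proof yields, so the $E_u$ printed in \eqref{eq:order-10} is a typo in the statement rather than a discrepancy in your argument.
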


\begin{proof}
By a direct calculation using Lemma \ref{lem:char-adapt},
we have
{\allowdisplaybreaks
\begin{align*}
& \Delta_y(u,0) = \frac{\ep'}{\ep^2},\quad
 \Delta_{yy}(u,0) = 2\frac{\ep'\ep''-2(\ep')^2}{\ep^4},\\
& \ep_{y}(u,0) = \frac{\ep'}{\ep},\quad
 \ep_{yy}(u,0) = 2\frac{\ep'\ep''-2(\ep')^2}{\ep^3},\\
& E_{y}(u,0) = E_v + 2 \ep' G + \ep G_u,\quad
 F_{y}(u,0) = -\frac{\ep'}{\ep}G -G_u +F_v,\\
& G_{y}(u,0) = \frac{1}{\ep}G_u +G_v,\quad
 G_{yy}(u,0) = -\frac{2\ep'}{\ep^3}G_u 
             +\frac1{\ep^2}G_{uu} +\frac2{\ep}G_{uv}+G_{vv},
\end{align*}
where we used \eqref{eq:EFG-u-axis}.
Moreover, since
\begin{align*} 
E_{yy}(u,0) 
&=  \frac1{\ep^2}( 2(\ep \ep''+(\ep')^2)G
    + 4\ep\ep'G_u+\ep^2G_{uu} ) \\
&\hspace{3cm}  
    -\frac{2\ep'}{\ep^2}(2\ep'G + \ep G_u)
    +\frac2{\ep}E_{uv}+E_{vv},
\end{align*}
there exists} 
a smooth function $q_1(u)$ defined on the $u$-axis
such that
$
E_{yy}(u,0)= (-2(\ep')^2G+\ep q_1(u))/\ep^2
$
holds.
Similarly, 
there exists 
a smooth function $q_2(u)$ defined on the $u$-axis
such that
$
F_{yy}(u,0)= (2(\ep')^2G+\ep q_2(u))/\ep^3.
$
By \eqref{eq:Exy}, \eqref{eq:Fxy}, \eqref{eq:Gxy}, 
we have{\allowdisplaybreaks
$$
  \hat{E}(u,v)=\frac1{\Delta^2} E,\quad
  \hat{F}(u,v)
  =\frac1{\Delta}\left( \frac1{\ep\Delta}E+F \right) ,\quad
  \hat{G}(u,v)
  =\frac1{\ep^2\Delta^2}E+\frac2{\ep\Delta}F+G.
$$
Since} $\hat{E}(u,0)=E(u,0)$,
and $\hat{E}_x(u,0)=E_u(u,0)$,
we have \eqref{eq:order12}.
Next we consider $\hat{E}_y$ and $\hat{F}_y$.
Substituting 
the above $\Delta_y$, $\ep_y$, $E_y$, $F_y$ into 
$\hat{E}_y = (-2\Delta_y E + \Delta E_y)/\Delta^3$ and
$$
  \hat{F}_y 
  = 
  -\frac{\ep_y \Delta+2\ep \Delta_y }{\ep^2\Delta^3} E 
      + \frac{1}{\ep\Delta^2}E_y -\frac{\Delta_y}{\Delta^2}F + \frac{1}{\Delta}F_y,
$$
we have \eqref{eq:order-10}.
With respect to $\hat{E}_{yy}$,
substituting 
the above $\Delta_y$, $\Delta_{yy}$, $E_y$, $E_{yy}$ into 
$$
  \hat{E}_{yy}
  = -2\frac{\Delta \Delta_{yy}-3\Delta_y^2}{\Delta^4}E
  -4 \frac{\Delta_y}{\Delta^2}E_y
  +\frac1{\Delta^2}E_{yy},
$$
we have
$
  \hat{E}_{yy}(u,0)
  = \left(
    4\ep'(\ep'G-E_v) + \ep (q_1 -4\ep''G-4\ep' G_u)
  \right)/\ep^2.
$
Thus, setting 
$\tau_1(u):=q_1(u) -4\ep''(u)G(u,0)-4\ep'(u) G_u(u,0)$,
\eqref{eq:order-2} holds.
By a similar calculation,
we have 
\eqref{eq:order-2b}, 
\eqref{eq:order-2c}
and \eqref{eq:order-4}.
\end{proof}

\begin{lemma}\label{lem:kB-char}
For $(u,0)\in Z^c$,
we have
\begin{equation}\label{eq:kB-char}
  \kappa_B(u) = 
  -\frac{\ep(u)^{-8/3}}{5 G^2 (E_v + \ep \tau_3)^{5/3}}
  \left( (E_v)^2(2\ep'(u) G+E_v) +\ep(u)\tau_0(u)  \right),
\end{equation}
where $\tau_i(u)$ $(i=0,3)$ 
are smooth functions on the $u$-axis,
and $E_v=E_v(u,0)$, $G=G(u,0)$.
\end{lemma}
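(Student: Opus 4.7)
The plan is to substitute the expansions from Lemma~\ref{lem:kB-prepare} into the formula \eqref{eq:kB-def} for $\kappa_B(u)$, tracking the precise orders of $\ep(u)$. Since $\hat{E}(u,0)=\ep^2 G$ and $\hat{G}_y(u,0)=\ep^{-2}(E_v+\ep\tau_3)$, the prefactor becomes
$$2\hat{E}^2(\hat{G}_y)^{5/3}\Big|_{v=0}=2\ep^{2/3}G^2(E_v+\ep\tau_3)^{5/3},$$
so the task reduces to showing that
$$A_1-\frac{1}{5}A_2=\frac{1}{5\ep^2}\bigl[2E_v^2(2\ep'G+E_v)+\ep\,\tilde{\tau}(u)\bigr]$$
for some smooth $\tilde{\tau}$.

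First, I would analyze $A_1=\hat{G}_y(\hat{E}\hat{E}_{yy}-2\hat{E}\hat{F}_{xy}+\hat{E}_x\hat{F}_y)$. Substituting the expansions from Lemma~\ref{lem:kB-prepare}, the leading $\ep^0$-order contributions of the three summands inside the parenthesis are $4G\ep'b_0$, $-2G\ep'b_0$, and $-2\ep'Gb_0$ respectively, and these cancel identically. Hence $\hat{E}\hat{E}_{yy}-2\hat{E}\hat{F}_{xy}+\hat{E}_x\hat{F}_y=\ep\,\mu_1(u)$ for some smooth $\mu_1$, and combined with $\hat{G}_y=O(\ep^{-2})$ this yields $A_1=O(\ep^{-1})$; in particular $A_1$ contributes only to the $\ep\tilde{\tau}$ remainder.

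Next I would compute $A_2=\hat{E}_y(\hat{E}\hat{G}_{yy}-2(\hat{F}_y)^2)$. The leading $\ep^{-2}$-coefficient inside the parenthesis is $2[\ep'G(\ep'G-4E_v)-b_0^2]$, and the key algebraic identity
$$\ep'G(\ep'G-4E_v)-b_0^2=-E_v(2\ep'G+E_v),$$
verified by expanding $b_0=\ep'G-E_v$, reduces this coefficient to $-2E_v(2\ep'G+E_v)$. Multiplying by $\hat{E}_y(u,0)=E_v+\ep G_u$, whose leading term is $E_v$, gives
$$A_2=\frac{-2E_v^2(2\ep'G+E_v)}{\ep^2}+O(\ep^{-1}).$$
Combining this with $A_1=O(\ep^{-1})$ and dividing by the prefactor above yields the claimed formula, with $\tau_0(u)$ absorbing all residual smooth $O(\ep^{-1})$ contributions from both $A_1$ and $A_2$.

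The main obstacle is the cancellation of the $\ep^0$-order contributions to $A_1$, which is crucial: without it, the orders in $\ep$ would fail to match the asserted $\ep^{-8/3}$ scaling. A secondary but essential ingredient is the algebraic identity producing the clean factor $E_v(2\ep'G+E_v)$ in $A_2$. Once both observations are secured, the remainder of the argument is careful bookkeeping of smooth remainder terms.
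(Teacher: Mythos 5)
Your proof is correct and takes essentially the same route as the paper: substitute the expansions of Lemma \ref{lem:kB-prepare} into \eqref{eq:kB-def}, note the cancellation of the $\ep^0$-order terms in $A_1$ and the identity $\ep'G(\ep'G-4E_v)-b_0^2=-E_v(2\ep'G+E_v)$ governing the leading term of $A_2$, and absorb everything else into $\ep\,\tau_0(u)$ --- the paper's proof is just a terser version of this same bookkeeping. (You also correctly use $\hat{E}_y(u,0)=E_v+\ep G_u$; the displayed formula \eqref{eq:order-10} in Lemma \ref{lem:kB-prepare} reads $E_u+\ep G_u$, which is a typo, and your version is the one consistent with the computation and with Theorem \ref{thm:kB}.)
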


\begin{proof}
By Lemma \ref{lem:kB-prepare},
we have 
$$
  \ep(u)^2\left(A_1(u)-\frac1{5}A_2(u)\right)
  = \frac{2}{5}\left((E_v)^2(2\ep'(u) G+E_v) +\ep(u)\tau_0(u) \right),
$$
where $A_i(u)$ $(i=1,2)$ are the functions
defined by \eqref{eq:A1}, \eqref{eq:A2},
and
$\tau_0(u)$ is a smooth function defined on the $u$-axis.
Moreover, 
by Lemma \ref{lem:kB-prepare},
$
  \ep(u)^{-2/3}\hat{E}^2\hat{G}_y^{5/3}
  = G^2(E_v+\ep(u)\tau_3(u))^{5/3}
$
holds along the $u$-axis,
where $\tau_3(u)$ is a smooth function as in Lemma \ref{lem:kB-prepare}.
Substituting these identities into \eqref{eq:kB-def},
we have \eqref{eq:kB-char}.
\end{proof}

\begin{theorem}\label{thm:kB}
Let $f:\Sigma\to M^3$ be a mixed type surface
in a Lorentzian $3$-manifold $M^3$
and $p\in \Sigma$ a lightlike point 
of the admissible second kind.
Then, the balancing curvature $\kappa_B$ 
converges to $0$ or diverges to $\pm\infty$ at $p$.
In particular, if $p$ is not an $L_3$-point,
then $\kappa_B$ tends to $-\infty$ at $p$.
\end{theorem}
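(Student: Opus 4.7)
The proof will mirror those of Theorem~\ref{thm:introA}(ii) and Theorem~\ref{thm:kG}, exploiting the characteristic-coordinate formula established in Lemma~\ref{lem:kB-char}. Fix a characteristic coordinate system $(U;u,v)$ centered at $p=(0,0)$, and let $\{p_n=(u_n,0)\}\subset Z^c$ be any sequence with $u_n\to 0$; the admissibility of $p$ guarantees such a sequence exists. Setting
\begin{equation*}
  R(u):=-\frac{(E_v)^{2}\bigl(2\ep'(u)G+E_v\bigr)+\ep(u)\,\tau_{0}(u)}{5\,G^{2}\,(E_v+\ep\,\tau_{3})^{5/3}},
\end{equation*}
Lemma~\ref{lem:kB-char} reads $\ep(u)^{8/3}\kappa_B(u)=R(u)$. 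Because $E_v(0,0)\neq 0$ by Lemma~\ref{lem:lambda_v} and $\ep(0)=0$, the denominator is bounded away from zero, so $R$ extends to a $C^\infty$ function on a neighborhood of $u=0$.

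For the ``in particular'' assertion I first assume $p$ is not an $L_3$-point, so in addition $\ep'(0)=0$. Substituting $u=0$ gives
\begin{equation*}
  R(0)=-\frac{E_v(0,0)^{3}}{5\,G(0,0)^{2}\,E_v(0,0)^{5/3}}=-\frac{E_v(0,0)^{4/3}}{5\,G(0,0)^{2}}<0,
\end{equation*}
where I use the real cube root convention, so that $E_v(0,0)^{4/3}=|E_v(0,0)|^{4/3}>0$. Since $\ep(u_n)^{8/3}=|\ep(u_n)|^{8/3}\to 0^{+}$ and $R(u_n)\to R(0)<0$, we conclude $\kappa_B(p_n)=R(u_n)/\ep(u_n)^{8/3}\to -\infty$, proving the second assertion.

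For the first assertion in the general admissible second-kind case, I would follow the end of the proof of Theorem~\ref{thm:introA}(ii). If $R\equiv 0$ near $0$ then $\kappa_B\equiv 0$ on $Z^c$ and there is nothing to prove. Otherwise, write $R(u)=u^{m}R_{0}(u)$ and $\ep(u)=u^{\ell}\ep_{0}(u)$ with $R_{0}(0),\ep_{0}(0)\neq 0$, $m\in\Z_{\geq 0}$, and $\ell\geq 1$ (so $\ell=k-2$ when $p$ is an $L_k$-point). Then
\begin{equation*}
  |\kappa_B(u_n)|=|u_n|^{\,m-8\ell/3}\,\frac{|R_{0}(u_n)|}{|\ep_{0}(u_n)|^{8/3}},
\end{equation*}
and the exponent $m-8\ell/3$ is non-integer whenever $3\nmid \ell$, which forces $|\kappa_B(p_n)|$ to tend to $0$ or to $\infty$.

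The principal obstacle is this final dichotomy: ruling out a finite nonzero limit for $\kappa_B$. When $3\nmid\ell$ (covering the $L_3$- and $L_4$-cases) the argument is immediate from the irrationality of $8\ell/3$ modulo $\Z$. The delicate residual cases are when $\ell$ is a multiple of $3$ (so that $m=8\ell/3\in\Z$ could \emph{a priori} cancel) and when $\ep$ vanishes to infinite order at $0$; these require a finer leading-coefficient inspection of $R$, paralleling the treatment in the proof of Theorem~\ref{thm:introA}(ii), to exclude the exact cancellation $\ep^{8/3}\sim R$.
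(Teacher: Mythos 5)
Your argument is the same as the paper's: the identity $\ep(u)^{8/3}\kappa_B(u)=r_B(u)$ from Lemma \ref{lem:kB-char}, the evaluation of the right-hand side at $u=0$ when $\ep'(0)=0$ to get divergence to $-\infty$, and the non-integer-exponent trick for the remaining case. The only thing to repair is your closing paragraph: the ``delicate residual cases'' you flag ($3\mid\ell$, and $\ep$ vanishing to infinite order) are vacuous. In every such case $\ell\neq 1$, i.e.\ $p$ is not an $L_3$-point, so $\ep'(0)=0$ and your own computation gives $R(0)=-E_v(0,0)^{4/3}/(5\,G(0,0)^2)\neq 0$; hence $m=0$ there, the feared cancellation $\ep^{8/3}\sim R$ cannot occur, and $\kappa_B\to-\infty$ --- which is exactly the second assertion you already proved and is one of the admitted outcomes of the first. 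The only situation in which $R(0)$ can vanish, so that $m\geq 1$ and the fractional-exponent argument is genuinely needed, is the $L_3$ case $\ell=1$, where $3\nmid\ell$ and your argument closes. This is precisely how the paper organizes the case split: ``not an $L_3$-point'' yields $-\infty$ outright from $r_B(0)\neq 0$, and ``$L_3$-point'' yields the $0$-or-$\pm\infty$ dichotomy from $k-\tfrac{8}{3}\notin\Z$. So no finer leading-coefficient inspection is required; your proof is complete once you assemble the two pieces you already have.
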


\begin{proof}
As in the proof of Theorem \ref{thm:introA},
take a characteristic coordinate system $(U;u,v)$ 
centered at $p$,
and a sequence $\{p_n\}_{n\in \N} \subset Z^c$
such that $\lim_{n\to \infty}p_n=p$.
We may write $p_n=(u_n,0)\,(\in Z^c)$.
By Lemma \ref{lem:kNG}, we have
$\ep(u)^{8/3}\kappa_B(u) = r_B(u)$,
where we set
$$
  r_B(u):=\left.
    -\frac{1}{5 G^2 (E_v + \ep \tau_3)^{5/3}}
    \left( (E_v)^2(2\ep'(u) G+E_v) +\ep(u)\tau_0(u)  \right)
    \right|_{v=0}.
$$
Since $\ep(0)=0$, it holds that 
$$
  r_B(0)
  = -\frac{E_v(0,0)^{4/3}}{5 G(0,0)^2} - \ep'(0)\frac{2E_v(0,0)^{1/3}}{5G(0,0)}.
$$
First, assume that $p$ is not an $L_3$-point.
Then, $\ep'(0)=0$ holds.
Since $u_n$ converges to $0$ as $n\to \infty$,
we have
$$
  \lim_{n\to \infty} \ep(u_n)^{8/3}\kappa_B(p_n)
  = r_B(0)
  = -\frac{E_v(0,0)^{4/3}}{5 G(0,0)^2}.
$$
Hence, $\kappa_B(p_n)$ diverges to $-\infty$ as $n\to \infty$.
Next, let us assume that $p$ is an $L_3$-point.
Since $\ep(0)=0$ and $\ep'(0)\ne0$ hold,
there exist a smooth function $\ep_0(u)$
such that
$\ep(u) = u\, \ep_0(u)$ $(\ep_0(0)\ne0)$ holds.
On the other hand, 
since $r_B(u)$ is a smooth function on the $u$-axis,
there exist an integer $k\in \Z$ and 
a smooth function $\mu_0(u)$
such that $r_B(u) = u^k \mu_0(u)$ holds.
Then, on $Z^c$, we have
$$
  \kappa_B(u_n)
  = u^{k-\frac8{3}} \frac{\mu_0(u_n)}{\sqrt[3]{\ep_0(u_n)^8}}.
$$
Since $k-\frac8{3}$ is not an integer,
$\kappa_B(u_n)$ tends to $0$ or diverges as $n\to \infty$,
which gives the desired result.
\end{proof}

\section{Behavior of Gaussian curvature}
\label{sec:Gauss}

In this section, 
we study the behavior of the Gaussian curvature $K$
at non-degenerate lightlike points.
As in the previous section,
since we investigate the local properties, 
we assume that the ambient manifold $M^3$ is oriented
throughout this section.
After calculating $K$ at a lightlike point of the first kind 
(Proposition \ref{prop:lambda-K}),
we give a characterization of the boundedness of $K$ 
in terms of the invariants $\kappa_L$, $\kappa_N$ and $\kappa_B$
in Theorem \ref{thm:boundedness}.
Then, we show a relationship between 
$\kappa_L$ and the shape of the surface
(Corollary \ref{cor:shape}),
and prove Theorem \ref{thm:introB}
and Corollary \ref{cor:introC}.

\subsection{The lightlike normal curvature on a coordinate neighborhood}

We have discussed a property of 
the cross product at lightlike points
in Lemma \ref{lem:gaiseki}.
Then we have the following.

\begin{lemma}\label{lem:psi}
Let $f : \Sigma \to M^3$ be a mixed type surface
and $p$ a non-degenerate lightlike point.
Take a coordinate neighborhood $(V;u,v)$ of $p$ 
such that $\partial_v$ gives a null vector field.
Then, there exist an open neighborhood 
$U\subset V$ of $p$,
and a smooth vector field 
$\psi$ of $M^3$ along $f$ 
defined on $U$ such that 
$($cf.\ \eqref{eq:1st-FF}$)$
either 
$$
  f_u\times f_v
  = \sqrt{E} f_v+\lambda\, \psi
  \quad\text{or}\quad
  f_u\times f_v
  = -\sqrt{E} f_v+\lambda\, \psi
$$
holds on $U$.
\end{lemma}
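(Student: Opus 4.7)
The plan is to determine $f_u \times f_v$ pointwise on $LD$ via Lemma \ref{lem:gaiseki}, and then to promote that pointwise identity to a neighborhood identity using the division lemma along the regular curve $LD = \{\lambda = 0\}$.

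First, I would analyze the geometry on $LD$. By hypothesis, $\partial_v$ is a null vector field, so at each $q \in LD \cap V$ the vector $f_v|_q = df_q(\partial_v)$ is lightlike. Because $df_q(\mathcal{N}_q)$ is the unique null line of the degenerate plane $Q_q := df_q(T_q \Sigma)$, it is orthogonal to every vector in $Q_q$; in particular $\inner{f_u}{f_v}(q) = 0$ and $\inner{f_v}{f_v}(q) = 0$, i.e.\ $F$ and $G$ vanish on $LD$. Since $\dim \mathcal{N}_q = 1$ and $f_u|_q \not\in \mathcal{N}_q$, $f_u|_q$ is spacelike and $E(q) > 0$. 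Shrinking $V$ to an open neighborhood $U$ of $p$, I may assume $E > 0$ on $U$, so $\sqrt{E}$ is smooth there, and I may also assume $d\lambda \neq 0$ on $U$ so that $LD \cap U$ is a connected regular curve.

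Next, the first half of Lemma \ref{lem:gaiseki} applied to the spacelike vector $f_u|_q$ and the lightlike vector $f_v|_q$ (which are orthogonal) gives
\[
  (f_u \times f_v)|_q \;=\; \sigma(q)\,\sqrt{E(q)}\; f_v|_q, \qquad \sigma(q) \in \{+1,-1\},
\]
at every $q \in LD \cap U$. The assignment $q \mapsto \sigma(q)$ is locally continuous (indeed smooth where $f_v \neq 0$), and since $LD \cap U$ is connected, $\sigma$ is a constant, say $\sigma \equiv \sigma_0 \in \{+1,-1\}$.

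Finally, I consider the smooth vector field along $f$ given by
\[
  \Phi := f_u \times f_v - \sigma_0\, \sqrt{E}\; f_v,
\]
which is defined on $U$. By the previous step $\Phi$ vanishes identically on $LD \cap U = \{\lambda = 0\} \cap U$. Writing $\Phi$ in components with respect to a local frame of $TM^3$ along $f$ (for instance the pullback of a local orthonormal frame of $M^3$), each component is a smooth real-valued function vanishing on the regular hypersurface $\{\lambda = 0\}$; the division lemma, which applies because $d\lambda(p) \neq 0$, then produces smooth functions $\psi^i$ with that component equal to $\lambda\,\psi^i$. Reassembling gives a smooth vector field $\psi$ along $f$ with $\Phi = \lambda\,\psi$, which is exactly $f_u \times f_v = \sigma_0 \sqrt{E}\, f_v + \lambda\, \psi$, i.e.\ one of the two asserted formulas depending on $\sigma_0$.

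The only delicate points are the constancy of $\sigma$ along the connected curve $LD \cap U$ (handled by continuity after shrinking $U$) and the vectorial application of the division lemma (handled component-wise in a smooth frame); neither should present a serious obstacle.
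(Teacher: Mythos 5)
Your proposal is correct and follows essentially the same route as the paper: apply Lemma \ref{lem:gaiseki} pointwise on $LD$ to the orthogonal pair $(f_u,f_v)$, fix the sign consistently, and then use the division lemma (valid since $d\lambda\neq 0$) to factor the difference as $\lambda\,\psi$. Your explicit continuity-plus-connectedness argument for the constancy of the sign $\sigma$ is a point the paper passes over with ``without loss of generality,'' so your write-up is if anything slightly more careful on that step.
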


\begin{proof}
For each $q\in LD\cap V$, we have that
$f_u(q)\in T_{f(q)}M^3$ is a spacelike vector and 
$f_v(q) \in T_{f(q)}M^3$ is a lightlike vector such that
$\inner{f_u(q)}{f_v(q)}=0$.
Then, by Lemma \ref{lem:gaiseki}, 
either 
$$
f_u(q) \times f_v(q) =  \sqrt{E(q)} \, f_v(q)
\quad\text{or}\quad
f_u(q) \times f_v(q) = - \sqrt{E(q)} \, f_v(q)
$$
holds.
Since $q\in LD\cap V$ is chosen arbitrarily, 
we may assume that either
$$
  f_u \times f_v- \sqrt{E} \, f_v=0
  \quad\text{or}\quad
  f_u \times f_v+ \sqrt{E} \, f_v=0  
$$
holds on $LD\cap V$,
without loss of generality.
Since $d\lambda\ne0$,
the division lemma 
(cf.\ \cite[Appendix A]{UY_geloma}) 
implies that
there exist an open neighborhood $U\subset V$ of $p$,
and a smooth vector field $\psi$ of $M^3$  
along $f$ defined on $U$ such that
either
$$
  f_u \times f_v- \sqrt{E} \, f_v=\lambda\, \psi
  \quad\text{or}\quad
  f_u \times f_v+ \sqrt{E} \, f_v=\lambda\, \psi
$$
holds on $U$.
Thus, we have the desired result.
\end{proof}

\begin{definition}\label{def:admissible-coord}
Let $p$ be a non-degenerate lightlike point.
A coordinate neighborhood $(U;u,v)$ of $p$
is called {\it admissible} if there exists 
a smooth vector field 
$\psi$ of $M^3$ along $f$ 
defined on $U$
such that 
\begin{equation}\label{eq:psi-pm}
  f_u\times f_v
  = \sigma\sqrt{E} f_v+\lambda\, \psi
\end{equation}
holds on $U$, where $\sigma=\pm1$.
\end{definition}

\begin{remark}\label{rem:admissible}
According to Lemma \ref{lem:gaiseki},
it holds that, if $(U;u,v)$ is admissible, 
then $\partial_v$ gives a null vector field on $U$.
Hence, together with Lemma \ref{lem:psi},
we may identify 
an admissible coordinate system
with 
a coordinate system such that $\partial_v$ is a null vector field,
on a sufficiently small neighborhood 
of a non-degenerate lightlike point.
In particular, 
admissible coordinate systems can be determined intrinsically.

In the case that $p$ is of the first kind,
any adapted (or specially adapted) coordinate system 
$(U;u,v)$ centered at $p$ satisfies that 
$\partial_v$ is a null vector field.
Therefore, taking a smaller neighborhood if necessary,
we may assume that 
any adapted (or specially adapted) coordinate system
is admissible.
\end{remark}

The L-normal curvature $\kappa_N$ 
can be expressed using the map $\psi$.
Set functions $\phi_{ij}$ $(i,j=1,2)$ as
\begin{equation}\label{eq:aij}
  \phi_{11}:=\inner{\nabla_u f_{u}}{\psi},\quad
  \phi_{12}:=\inner{\nabla_u f_{v}}{\psi}=\inner{\nabla_v f_{u}}{\psi},\quad
  \phi_{22}:=\inner{\nabla_v f_{v}}{\psi}.
\end{equation}

\begin{lemma}\label{lem:kappa-N-sp}
On a specially adapted coordinate system $(U;u,v)$
centered at a lightlike point $p$ of the first kind,
the L-normal curvature $\kappa_N(u)$ 
along the $u$-axis is given by
\begin{equation}\label{eq:kappa-N-sp}
  \kappa_N(u) = -\sigma\, \phi_{11}(u,0).
\end{equation}
\end{lemma}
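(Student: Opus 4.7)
The plan is to apply Proposition \ref{prop:curvature-G} with the null vector field $\eta = \partial_v$, and then determine the lightlike transverse vector field $N(u)$ explicitly in the frame $\{f_u, f_v, \psi\}$ using the structural identity \eqref{eq:psi-pm}. In specially adapted coordinates, $E(u,0)=1$ so $u$ is automatically an arclength parameter for $\hat\gamma(u)=f(u,0)$, and $\beta(u)=\partial_v\inner{f_v}{f_v}|_{(u,0)}=G_v(u,0)=1$. Therefore Proposition \ref{prop:curvature-G} collapses to $\kappa_N(u) = \inner{\nabla_u f_u(u,0)}{N(u)}$, where $N(u)$ is characterized on the $u$-axis by $\inner{N}{f_u}=0$, $\inner{N}{N}=0$, and $\inner{N}{f_v}=1$ (since $L(u)=f_v(u,0)$).

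The heart of the argument is to compute $\inner{\psi}{f_u}$, $\inner{\psi}{f_v}$, and $\inner{\psi}{\psi}$ on the $u$-axis. Taking inner products of \eqref{eq:psi-pm} with $f_u$ and $f_v$ and using that $f_u\times f_v$ is orthogonal to both yields
\[
  \lambda\inner{\psi}{f_u} = -\sigma\sqrt{E}\,F,\qquad
  \lambda\inner{\psi}{f_v} = -\sigma\sqrt{E}\,G;
\]
combining these with property (2) of the cross product gives $\lambda^2\inner{\psi}{\psi}=F^2$. The specially adapted conditions $F(u,0)=G(u,0)=0$, $E(u,0)=1$, $\lambda_v(u,0)=G_v(u,0)=1$, together with Taylor expansion in $v$, yield the limits $G/\lambda\to 1$, $F/\lambda\to F_v(u,0)$, and $F^2/\lambda^2\to F_v(u,0)^2$ as $v\to 0$. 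Since $\psi$ itself is smooth (by the division lemma used in Lemma \ref{lem:psi}), we obtain
\[
  \inner{\psi}{f_u}\big|_{v=0} = -\sigma F_v(u,0),\quad
  \inner{\psi}{f_v}\big|_{v=0} = -\sigma,\quad
  \inner{\psi}{\psi}\big|_{v=0} = F_v(u,0)^2.
\]

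Writing $N(u) = a\,f_u + b\,f_v + c\,\psi$ on the $u$-axis and plugging in the three defining conditions for $N$, the specially adapted normalization makes the linear system elementary: $\inner{N}{f_v}=1$ gives $c=-\sigma$; $\inner{N}{f_u}=0$ gives $a=-F_v(u,0)$; and $\inner{N}{N}=0$ then forces $b=0$. Hence $N(u) = -F_v(u,0)f_u(u,0) - \sigma\,\psi(u,0)$. Substituting into $\kappa_N(u)=\inner{\nabla_u f_u}{N}$ and using $\inner{\nabla_u f_u}{f_u}=\tfrac12 E_u$ gives
\[
  \kappa_N(u) = -\tfrac12 F_v(u,0)\,E_u(u,0) - \sigma\,\phi_{11}(u,0).
\]
Finally, $E(u,0)\equiv 1$ implies $E_u(u,0)\equiv 0$, and the formula \eqref{eq:kappa-N-sp} follows.

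The main obstacle is purely computational: correctly extracting the $v\to 0$ limits of $F/\lambda$, $G/\lambda$, and $F^2/\lambda^2$ from the smoothness of $\psi$ and the specially adapted normalization. Once those three limits are in hand, both the identification of $N$ and the final simplification are linear algebra and a one-term Taylor expansion.
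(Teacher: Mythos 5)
Your proposal is correct and follows essentially the same route as the paper's proof: reduce to $\kappa_N(u)=\inner{\nabla_u f_u(u,0)}{N(u)}$ via the specially adapted normalizations, compute $\inner{\psi}{f_u}$, $\inner{\psi}{f_v}$, $\inner{\psi}{\psi}$ on the $u$-axis from \eqref{eq:psi-pm} and the division by $\lambda=v(E\hat G-v\hat F^2)$, and identify $N(u)=-F_v(u,0)f_u(u,0)-\sigma\psi(u,0)$. The only cosmetic difference is that you solve the linear system for $N$ from scratch, whereas the paper simply exhibits and verifies the same expression.
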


\begin{proof}
Since $(u,v)$ is adapted,
$\gamma(u):=(u,0)$ is a characteristic curve 
and $\eta:=\partial_v$ is a null vector field.
Set $\hat{\gamma}(u):= (f \circ \gamma)(u)=f(u,0)$.
Since 
$\inner{\hat{\gamma}'(u)}{\hat{\gamma}'(u)}=E(u,0)=1$,
the characteristic curve
$\gamma(u)$ is parametrized by arclength,
cf.\ \eqref{eq:1st-FF}.
Moreover, since 
$
  1=G_v
  =\partial_v\inner{f_v}{f_v}
  =\eta\inner{\eta f}{\eta f}
$
holds along the $u$-axis, 
$\eta  =\partial_v$ is a normalized null vector field.
Then, the L-normal curvature $\kappa_N(u)$ 
is given by 
\begin{equation}\label{eq:Lnormal-sp0}
  \kappa_N(u)=\inner{\nabla_u f_{u}(u,0)}{N(u)},
\end{equation}
where $N(u)$ is a vector field along 
$\hat{\gamma}(u)$
satisfying \eqref{eq:L-normal}.

Since $F(u,0)=G(u,0)=0$, 
there exist smooth functions 
$\hat{F},\hat{G}$ on a neighborhood of $p$ such that 
$F=v\,\hat{F}$ and
$G=v\,\hat{G}$. 
Then, on the $u$-axis, we have 
\begin{equation}\label{eq:FG-hat-v}
  \hat{F}(u,0) = F_v(u,0),\qquad
  \hat{G}(u,0)=1,
\end{equation}
where we used $G_v(u,0)=1$.
Using 
$\inner{f_u\times f_v}{f_u\times f_v} = -EG+F^2$,
we have
$\inner{f_u}{\psi} = - \sigma \sqrt{E} F/\lambda$,
$\inner{f_v}{\psi} = - \sigma \sqrt{E} G/\lambda$
and
$\inner{\psi}{\psi} = F^2/\lambda^2$
on $\{(u,v) \in U \,;\, v\ne0\}$.
Since 
\begin{equation}\label{eq:lambda-sp}
  \lambda = EG-F^2 = v(E \hat{G} -v \hat{F}^2),
\end{equation}
it follows that
$\inner{f_u}{\psi} 
= - \sigma \sqrt{E} \hat{F}/(E \hat{G} -v \hat{F}^2)$,
$\inner{f_v}{\psi} 
  = - \sigma\sqrt{E} \hat{G}/(E \hat{G} -v \hat{F}^2)$
and
$\inner{\psi}{\psi} 
  = \hat{F}^2/(E \hat{G} -v \hat{F}^2)^2$
on $U$, by the continuity.
In particular, by \eqref{eq:FG-hat-v},
\begin{equation}\label{eq:psi-inner}
  \inner{f_u}{\psi} = - \sigma F_v ,\quad
  \inner{f_v}{\psi} = - \sigma ,\quad
  \inner{\psi}{\psi} = F_v^2
\end{equation}
holds on the $u$-axis.
Then, 
$
  N(u) := -\sigma \psi(u,0) - F_v(u,0) f_u(u,0)
$
satisfies \eqref{eq:L-normal},
where we used \eqref{eq:psi-inner}.
Substituting $N(u)$ into \eqref{eq:Lnormal-sp0},
we have \eqref{eq:kappa-N-sp}.
Here, we used
$\inner{f_u(u,0)}{\nabla_u f_{u}(u,0)}=E_u(u,0)/2=0$.
\end{proof}

\subsection{Boundedness of Gaussian curvature}

Here, we calculate the Gaussian curvature 
near non-degenerate lightlike points.

Let $f:\Sigma\to M^3$ be a mixed type surface.
Take a coordinate neighborhood $(U;u,v)$.
Since $\inner{f_u \times f_v}{f_u \times f_v}
=-\lambda$, 
we have that
$\nu:=(f_u\times f_v)/\sqrt{|\lambda|}$
gives a unit normal vector field along $f$
on the non-lightlike point set $U_+\cup U_-$,
where we denote by $U_+$ (resp.\ $U_+$) 
the set of spacelike (resp.\ timelike) points on $U$.
We set the smooth functions $h_{11}$, $h_{12}$ and $h_{22}$
on $U_+\cup U_-$ as
$$
  h_{11} := \inner{\nabla_{u}f_{u}}{\nu}
  ,\quad
  h_{12} := \inner{\nabla_{u}f_{v}}{\nu}= \inner{\nabla_{v}f_{u}}{\nu}
  ,\quad
  h_{22} := \inner{\nabla_{v}f_{v}}{\nu}
  ,
$$
where $\nabla$ is the Levi-Civita connection of $(M^3,\bar{g})$
and $\nabla_{u}:=\nabla_{\partial_u}$, $\nabla_{v}:=\nabla_{\partial_v}$.
Then, 
the second fundamental form $I\!I$ of $f$
is given by 
\begin{equation}\label{eq:2nd-FF}
  I\!I = h_{11}\,du^2 + 2h_{12} \, du\,dv + h_{22}\,dv^2.
\end{equation}
The extrinsic curvature function $K_{\rm ext}$ 
and the mean curvature function $H$ are written as
$$
  K_{\rm ext} := \frac{h_{11}h_{22}-(h_{12})^2}{\lambda},\qquad
  H := \frac{Eh_{22}-2Fh_{12}+Gh_{11}}{2\lambda}
$$
on $U_+\cup U_-$.
Let $K$ be the Gaussian curvature function of $ds^2$ 
defined on $\Sigma_+ \cup \Sigma_-$.
Set a smooth function $c_{\bar{g}}$ on $\Sigma_+ \cup \Sigma_-$
as 
\begin{equation}\label{eq:ambient}
  c_{\bar{g}}(p):= K_{\bar{g}}(T_p\Sigma)
  \qquad
  \left( p\in \Sigma_+ \cup \Sigma_- \right),
\end{equation}
where $K_{\bar{g}}$ is the sectional curvature 
of the Lorentzian manifold $(M^3,\bar{g})$.
Then, by the Gauss equation,
$K$ is given by 
$$
  K 
  = \begin{cases}
      -K_{\rm ext} + c_{\bar{g}} & \text{on $\Sigma_+$}, \\
      K_{\rm ext} + c_{\bar{g}} & \text{on $\Sigma_-$}. 
     \end{cases}
$$
Hence, on $U_+\cup U_-$, 
the Gaussian curvature $K$ is written as
\begin{equation}\label{eq:GaussCurvature}
  K 
  = \frac{-h_{11}h_{22}+(h_{12})^2}{|\lambda|} + c_{\bar{g}}.
\end{equation}

Let $p\in LD$ be a non-degenerate lightlike point. 
On an admissible coordinate system $(U;u,v)$
centered at $p$, there exists 
a smooth vector field 
$\psi$ of $M^3$ along $f$ 
defined on $U$
such that
$  f_u\times f_v
  = \sigma\sqrt{E} f_v+\lambda\, \psi,$
where 
$\sigma = \pm 1$.

Let $U_+$ (resp.\ $U_-$) be the set of 
spacelike points 
(resp.\ timelike points) on $U$.
Since $\nu=(f_u\times f_v)/\sqrt{|\lambda|}$ 
is a unit normal vector field on $U_+\cup U_-$,
we have
$
  \nu=
  (\sigma \sqrt{E}f_v+\lambda \psi)/\sqrt{|\lambda|}.
$
Hence, the second fundamental form $I\!I$ 
defined on on $U_+\cup U_-$ is written as
$I\!I = h_{11}\,du^2 + 2h_{12} \, du\,dv + h_{22}\,dv^2$,
where 
$$
    h_{ij} 
  :=\inner{\nabla_{u_i} f_{u_j} }{\nu}
  =\frac{1}{\sqrt{|\lambda|}}
     \left(\sigma \sqrt{E}\inner{\nabla_{u_i} f_{u_j}}{f_v} 
      + \lambda \,\phi_{ij} \right)
  \qquad(i,j=1,2),
$$
$u_1=u$, $u_2=v$, and 
\if0
\begin{align*}
  h_{11} &=\inner{\nabla_u f_{u}}{\nu}
  =\frac{1}{\sqrt{|\lambda|}}
     \left(\sigma \sqrt{E}\inner{\nabla_u f_{u}}{f_v} 
      + \lambda \,\phi_{11} \right),\\
  h_{12} &=\inner{\nabla_u f_{v}}{\nu}
  =\frac{1}{\sqrt{|\lambda|}}
    \left(\sigma \sqrt{E}\inner{\nabla_u f_{v}}{f_v}
      +\lambda \,\phi_{12} \right),\\
  h_{22} &=\inner{\nabla_v f_{v}}{\nu}
    =\frac{1}{\sqrt{|\lambda|}}
    \left(\sigma \sqrt{E}\inner{\nabla_v f_{v}}{f_v}
      +\lambda \,\phi_{22} \right),
\end{align*}
where 
\fi
$\phi_{ij}$ is the functinon given by \eqref{eq:aij}
for $i,j=1,2$.
We set smooth functions 
$\hat{h}_{ij}$ $(i,j=1,2)$ as 
$\hat{h}_{ij}:=\sqrt{|\lambda|} h_{ij}$. 
By 
$\inner{\nabla_u f_{v}}{f_v}=G_u/2$,
$\inner{\nabla_v f_{v}}{f_v}=G_v/2$
and \eqref{eq:fuufv},
it follows that 
\begin{equation}\label{eq:LMN-hat}
\begin{split}
  &\hat{h}_{11}
  =\sigma \sqrt{E}\left(F_u-\frac{E_v}{2}\right)
    +\lambda \phi_{11},\\
  \hat{h}_{12}
  =\sigma &\frac{\sqrt{E}G_u}{2}
  +\lambda \phi_{12},\quad
  \hat{h}_{22}
  =\sigma \frac{\sqrt{E}G_v}{2}+\lambda \phi_{22}.
\end{split}
\end{equation}
As in \eqref{eq:GaussCurvature},
the Gaussian curvature $K$ is given by 
$
  K 
  = (-h_{11}h_{22}+(h_{12})^2)/|\lambda| + c_{\bar{g}}
  = (-\hat{h}_{11}\hat{h}_{22}+(\hat{h}_{12})^2 + \lambda^2 c_{\bar{g}} )/\lambda^2
$
on $U_+\cup U_-$,
where $c_{\bar{g}}$ is a smooth function on $U_+\cup U_-$ 
defined as \eqref{eq:ambient}.
Hence
\begin{equation}\label{eq:Khat-DEF}
  \hat{K}:=\lambda^2 K
  =-\hat{h}_{11}\hat{h}_{22}+(\hat{h}_{12})^2 +\lambda^2 c_{\bar{g}}
\end{equation}
is a smooth function on $U$.
By \eqref{eq:LMN-hat}, $\hat{K}$ is written as 
\begin{multline}\label{eq:K-hat}
  \hat{K}
  = \frac{E}{4}\left( G_u^2 + G_v(E_v-2F_u) \right) \\
  + \frac{\sigma}{2} \lambda \sqrt{E}
    \left( -G_v \phi_{11} 
    + 2G_u \phi_{12} +\left(E_v - 2F_u\right) \phi_{22} \right)\\
  +\lambda^2 ((\phi_{12})^2-\phi_{11}\phi_{22} + c_{\bar{g}}).
\end{multline}

\begin{proposition}\label{prop:lambda-K}
Let $f:\Sigma\to M^3$ be a mixed type surface
and $p\in LD$ a lightlike point of the first kind. 
Take a specially adapted coordinate neighborhood 
$(U;u,v)$ of $p=(0,0)$.
Let $\lambda$ 
be the discriminant function 
$\lambda:=EG-F^2$ on $(U;u,v)$,
and $K$ be the Gaussian curvature on $U\setminus LD$.
Then, $\hat{K}:=\lambda^2 K$ is smoothly extended to $U$,
and expressed as
\begin{equation}\label{eq:K-hat-SP}
  \hat{K}(u,v)
  = -\frac1{2}\kappa_L(u) 
     + \frac1{2}v 
       \left( \kappa_N(u) - \kappa_B(u) 
        +\kappa_L(u)\varphi(u)
        \right)
     + v^2 \hat{K}_0(u,v),
\end{equation}
where $\hat{K}_0(u,v)$
{\rm (}resp.\ $\varphi(u)${\rm )} is 
a smooth function defined
on a neighborhood of 
$p=(0,0)$
{\rm (}resp.\ $0${\rm )}.
\end{proposition}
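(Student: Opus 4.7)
My plan is to start from the explicit expression \eqref{eq:K-hat} for $\hat K$, which writes $\hat K$ as a polynomial in $\lambda$, $\sqrt E$, and in derivatives of $E,F,G,\phi_{ij}$, together with the ambient sectional curvature $c_{\bar g}$. Since every factor is smooth on all of $U$, this expression already shows that $\hat K$ extends smoothly across $LD$. What remains is to read off the Taylor expansion of $\hat K(u,v)$ in $v$ at $v=0$ up to order $v^{1}$; the error is then a smooth function of the form $v^{2}\hat K_{0}(u,v)$ by Taylor's theorem, which gives the last term in \eqref{eq:K-hat-SP}.

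The second step is to exploit the specially adapted conditions. These give $E(u,0)=1$, $F(u,0)=G(u,0)=0$, $G_{v}(u,0)=1$, and, by differentiating along the $u$-axis, also $E_{u}(u,0)=F_{u}(u,0)=G_{u}(u,0)=G_{uv}(u,0)=0$. From $\lambda=EG-F^{2}$ one reads $\lambda(u,0)=0$ and $\lambda_{v}(u,0)=1$. On such a chart Proposition \ref{prop:kappa_L-adap} specializes to $\kappa_{L}(u)=-E_{v}(u,0)/2$, Lemma \ref{lem:kappa-N-sp} gives $\kappa_{N}(u)=-\sigma\,\phi_{11}(u,0)$, and rearranging \eqref{eq:kappa_B} yields the key identity
\[
 E_{vv}(u,0)-2F_{uv}(u,0)=-2\kappa_{B}(u)+\frac{E_{v}(u,0)}{5}\bigl(G_{vv}(u,0)-2F_{v}(u,0)^{2}\bigr).
\]

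Substituting into \eqref{eq:K-hat} at $v=0$ kills the $\lambda$- and $\lambda^{2}$-terms, so only the first summand survives and produces $\hat K(u,0)=E_{v}(u,0)/4=-\kappa_{L}(u)/2$, matching the stated constant term. For $\hat K_{v}(u,0)$ the $\lambda^{2}$-piece still contributes nothing (it vanishes to second order in $v$); the middle piece contributes $\tfrac{\sigma}{2}\bigl(-\phi_{11}+E_{v}\phi_{22}\bigr)\big|_{v=0}=\tfrac{\kappa_{N}}{2}-\sigma\kappa_{L}\phi_{22}(u,0)$ (using $\lambda_{v}(u,0)=1$ and $\sqrt{E(u,0)}=1$); and differentiating the first piece and applying the $\kappa_{B}$ identity above gives $-\tfrac{\kappa_{B}}{2}$ plus a term proportional to $E_{v}=-2\kappa_{L}$. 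Collecting everything, every correction to $\tfrac{1}{2}(\kappa_{N}-\kappa_{B})$ carries a factor of $\kappa_{L}$, and can be packaged into $\tfrac{1}{2}\kappa_{L}(u)\varphi(u)$ for a specific smooth function $\varphi(u)$ built from $F_{v}$, $G_{vv}$, and $\phi_{22}$ evaluated at $v=0$.

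The one point requiring care is this algebraic cancellation: the weight $\tfrac{1}{10}$ appearing in the definition \eqref{eq:kappa_B} of $\kappa_{B}$ is exactly what forces the second-order $v$-derivatives of $E,F,G$ that are not absorbed into $\kappa_{B}$ to come attached to a factor of $E_{v}$, so that the residue lies in the ideal generated by $\kappa_{L}$. This is the main bookkeeping obstacle; once it is checked, Taylor's theorem in $v$ supplies the smooth remainder $\hat K_{0}(u,v)$ and completes the proof of \eqref{eq:K-hat-SP}.
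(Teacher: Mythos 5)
Your proposal is correct and follows essentially the same route as the paper: both start from the expression \eqref{eq:K-hat} (which already shows smoothness of $\hat K$), invoke the specially adapted conditions together with Proposition \ref{prop:kappa_L-adap}, Lemma \ref{lem:kappa-N-sp} and \eqref{eq:kappa_B_axis}, and extract the $v^0$- and $v^1$-coefficients, with the remainder handled by Taylor's theorem. The only difference is presentational --- the paper substitutes explicit $v$-expansions of $E$, $F$, $G$, $\sqrt{E}$, $\phi_{11}$ into \eqref{eq:K-hat} whereas you evaluate $\hat K(u,0)$ and $\hat K_v(u,0)$ directly --- and your resulting $\varphi(u)$ agrees with the paper's.
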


\begin{proof}
By \eqref{eq:Khat-DEF}, $\hat{K}$ is a smooth function on $U$.
Using \eqref{eq:K-hat},
we shall prove \eqref{eq:K-hat-SP}.
As we have seen in the proof of Lemma \ref{lem:kappa-N-sp},
the $u$-axis $\gamma(u):=(u,0)$ 
gives a characteristic curve, and 
$\eta:=\partial_v$ is a null vector field. 
Since $E(u,0)=1$ and $F(u,0)=G(u,0)=0$,
there exist smooth functions 
$\hat{E}$, $\hat{F}$, $\hat{G}$ 
on a neighborhood of $p$ such that 
$$
  E(u,v)=1 + v\, \hat{E}(u,v),\quad
  F(u,v) = v\, \hat{F}(u,v),\quad
  G(u,v) = v\, \hat{G}(u,v)
$$
holds.
Then $\lambda=EG-F^2$ is expressed as 
(cf.\ \eqref{eq:lambda-sp})
\begin{equation}\label{eq:lambda-hat}
  \lambda = v\,\hat{\lambda}\qquad
  ( \hat{\lambda}:= E \hat{G} -v \hat{F}^2).
\end{equation}

By Proposition \ref{prop:kappa_L-adap},
$  2\kappa_L(u) = - E_v(u,0)$
holds. Thus we have 
$\hat{E}(u,0) = -2 \kappa_L(u)$.
Hence, there exists a smooth function  
$\check{E}$
on a neighborhood of $p$ such that 
$
  \hat{E}(u,v) = -2 \kappa_L(u) + v\, \check{E}(u,v)
$
holds.
Namely, it holds that
\begin{equation}\label{eq:Echeck}
  E(u,v)=1 -2 v\, \kappa_L(u) + v^2 \check{E}(u,v).
\end{equation}
Similarly,
by \eqref{eq:FG-hat-v},
there exist smooth functions 
$\check{F}$, $\check{G}$ 
on a neighborhood of $p$ such that 
\begin{equation}\label{eq:FGcheck}
  F(u,v) = v\, F_v(u,0) + v^2 \check{F}(u,v),\quad
  G(u,v) = v + v^2 \check{G}(u,v)
\end{equation}
holds.
Differentiating \eqref{eq:Echeck},
$\check{E}(u,0)=E_{vv}(u,0)/2$ holds.
Hence, by \eqref{eq:kappa_B_axis}, we have
$
  \check{E}(u,0) 
  = -\kappa_B(u) + F_{uv}(u,0) 
      -\kappa_L(u) ( G_{vv}(u,0)-2F_v(u,0)^2 )/5.
$
Then, there exists a smooth function  
$\tilde{E}$
on a neighborhood of $p$ such that
\begin{multline}\label{eq:Etilde}
  \check{E}(u,v) 
  = -\kappa_B(u) + F_{uv}(u,0) \\
      -\frac1{5}\kappa_L(u) \left( G_{vv}(u,0)-2F_v(u,0)^2 \right)
    + v \, \tilde{E}(u,v)
\end{multline}
holds.
Similarly, for $\check{G}$, there exists a smooth function 
$\tilde{G}$ on a neighborhood of $p$ such that 
$\check{G}(u,v)=\check{G}(u,0)+v\tilde{G}(u,v)$. 
Thus it follows that $G(u,v)=v+v^2(\check{G}(u,0)+v\tilde{G}(u,v))$.
Moreover, by \eqref{eq:Echeck},
we have
$\sqrt{E(u,0)}=1$ and
$\partial_v(\sqrt{E(u,v)})|_{v=0} =-\kappa_L(u)$.
Hence, 
there exists a smooth function  
$\hat{\rho}$
on a neighborhood of $p$ such that
\begin{equation}\label{eq:routE}
  \sqrt{E(u,v)} 
  = 1 -v\,\kappa_L(u) +v^2 \hat{\rho}(u,v)
\end{equation}
holds.
With respect to $\phi_{11}=\inner{\nabla_uf_{u}}{\psi}$,
applying Lemma \ref{lem:kappa-N-sp},
there exists a smooth function  
$\hat{\alpha}$
on a neighborhood of $p$ such that
\begin{equation}\label{eq:Ahat}
  \sigma \phi_{11}(u,v) 
  = -\kappa_N(u) + v \, \hat{\alpha}(u,v)
\end{equation}
holds.
Substituting 
\eqref{eq:lambda-hat}, 
\eqref{eq:Echeck}, 
\eqref{eq:FGcheck}, 
\eqref{eq:Etilde}, 
\eqref{eq:routE}, and 
\eqref{eq:Ahat} 
into \eqref{eq:K-hat},
we have \eqref{eq:K-hat-SP}
with
$
  \varphi(u) 
  = 2\kappa_L(u) -G_{vv}(u,0)/5-2\check{G}(u,0) +2F_v(u,0)^2/5
     -2 \sigma \,\phi_{22}(u,0).
$
\end{proof}

Hence, by Proposition \ref{prop:lambda-K},
we can describe the behavior of the Gaussian curvature near 
a lightlike point of the first kind in terms of 
the invariant, $\kappa_L$, $\kappa_N$ and $\kappa_B$ as follows.

\begin{theorem}\label{thm:boundedness}
Let $f:\Sigma\to M^3$ be a mixed type surface
and $p\in LD$ a lightlike point of the first kind.
Then, the following holds. 
\begin{itemize}
\item[{\rm (i)}]
If $\kappa_L(p)\ne0$,
then $K$ is unbounded near $p$.
More precisely, 
if $\kappa_L(p)>0$
{\rm (}resp.\ $\kappa_L(p)<0${\rm )},
then $K$ diverges to $-\infty$
{\rm (}resp.\ $+\infty${\rm )} at $p$.
\item[{\rm (ii)}] 
If $\kappa_L=0$ holds along the characteristic curve 
on a neighborhood of $p$,
and $\kappa_N(p)\ne\kappa_B(p)$,
then $K$ is unbounded near $p$, 
and changes sign between the two sides of the characteristic curve.
\item[{\rm (iii)}]
The Gaussian curvature $K$ is bounded 
on a neighborhood $U$ of $p$ if and only if 
\begin{equation}\label{eq:K-bdd}
\kappa_L=0
\quad \text{and} \quad
\kappa_N=\kappa_B
\end{equation}
hold along the characteristic curve in $U$.
\end{itemize}
\end{theorem}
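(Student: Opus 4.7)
The proof will proceed almost entirely by unpacking the expansion in Proposition \ref{prop:lambda-K}. In the specially adapted coordinate system $(U;u,v)$ centered at $p=(0,0)$, we have $E(u,0)=1$ and $G_v(u,0)=1$, so (as in \eqref{eq:lambda-hat}) the discriminant splits as $\lambda = v\,\hat{\lambda}$ with $\hat{\lambda}(0,0) = E(0,0)\hat{G}(0,0) = 1$. Shrinking $U$ if necessary, $\hat{\lambda}$ is bounded away from $0$, and so on $U\setminus LD$ we may write
\begin{equation*}
  K(u,v) \;=\; \frac{\hat{K}(u,v)}{v^{2}\,\hat{\lambda}(u,v)^{2}},
\end{equation*}
with $\hat K$ the smooth function of Proposition \ref{prop:lambda-K}. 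Thus the question of boundedness/divergence of $K$ at $p$ reduces to comparing the vanishing order of $\hat K$ in $v$ with $v^{2}$.

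For assertion (i), I substitute $u=0$ and $v=0$ in the expansion \eqref{eq:K-hat-SP} to read off $\hat{K}(0,0)=-\tfrac{1}{2}\kappa_L(p)$. When $\kappa_L(p)\ne 0$, continuity gives $\hat{K}(u,v)\to -\tfrac{1}{2}\kappa_L(p)$ as $(u,v)\to(0,0)$, while $v^2 \hat{\lambda}^2 \to 0^{+}$. Hence $K\to -\infty$ if $\kappa_L(p)>0$ and $K\to +\infty$ if $\kappa_L(p)<0$, with the same rate $\sim \tfrac{-\kappa_L(p)/2}{v^{2}}$ from either side of the characteristic curve.

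For assertion (ii), the hypothesis $\kappa_L\equiv 0$ along the characteristic curve in $U$ kills both the constant term in $v$ and the $\kappa_L(u)\varphi(u)$ correction in \eqref{eq:K-hat-SP}, leaving
\begin{equation*}
  \hat{K}(u,v) \;=\; \frac{v}{2}\bigl(\kappa_N(u)-\kappa_B(u)\bigr) \;+\; v^{2}\hat{K}_{0}(u,v).
\end{equation*}
Dividing by $v^{2}\hat{\lambda}^{2}$ yields $K(u,v)=\tfrac{\kappa_N(u)-\kappa_B(u)}{2v\,\hat{\lambda}(u,v)^{2}}+\tfrac{\hat{K}_{0}(u,v)}{\hat{\lambda}(u,v)^{2}}$. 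At $p$, the first term blows up to $+\infty$ on one side of the characteristic curve ($v>0$ or $v<0$) and to $-\infty$ on the other, since $\kappa_N(p)-\kappa_B(p)\ne 0$; this gives both unboundedness and the claimed sign-change.

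Assertion (iii) is then a bookkeeping step. For the ``if'' direction, if $\kappa_L=0$ and $\kappa_N=\kappa_B$ hold identically along the characteristic curve $\gamma(u)=(u,0)$ in $U$, then the right-hand side of \eqref{eq:K-hat-SP} vanishes to order at least $v^{2}$ uniformly in $u$, so $\hat{K}/v^{2}$ extends smoothly across $LD$ and $K=\hat K/(v^{2}\hat\lambda^{2})$ is bounded on $U$. For the ``only if'' direction I argue by contraposition at an arbitrary point $q=\gamma(u_0)$ near $p$ on the characteristic curve: since lightlike points of the first kind form an open subset of $LD$, $q$ is again of the first kind, and I may re-center a specially adapted coordinate system there and invoke (i) or (ii) to produce unboundedness if either $\kappa_L(u_0)\ne0$ or $\kappa_N(u_0)\ne\kappa_B(u_0)$. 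This forces both equalities to hold everywhere on the characteristic curve in $U$. No step is really an obstacle; the only care required is in handling the ``only if'' part of (iii), where one must vary the base point along the characteristic curve because the expansion \eqref{eq:K-hat-SP} is coordinate-centered at $p$.
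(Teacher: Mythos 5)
Your proposal is correct and follows essentially the same route as the paper: everything is read off from the expansion \eqref{eq:K-hat-SP} of $\hat{K}=v^{2}\hat{\lambda}^{2}K$ in Proposition \ref{prop:lambda-K}. The only (harmless) detour is in the ``only if'' part of (iii): re-centering a specially adapted chart at each point of the characteristic curve is unnecessary, since \eqref{eq:K-hat-SP} already holds with $u$ varying, so boundedness of $K$ directly forces the coefficients $-\tfrac{1}{2}\kappa_L(u)$ and $\tfrac{1}{2}\bigl(\kappa_N(u)-\kappa_B(u)+\kappa_L(u)\varphi(u)\bigr)$ to vanish for all small $u$.
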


\begin{proof}
Assertions (i) and (ii)
are immediate consequences of Proposition \ref{prop:lambda-K}.
So we shall prove (iii).
By \eqref{eq:Khat-DEF}, 
$\hat{K} = v^2 \hat{\lambda}^2 K$.
Hence, by \eqref{eq:K-hat-SP},
if $K$ is bounded on a neighborhood of $p$,
then 
$$
  \kappa_L(u)=\kappa_N(u) - \kappa_B(u) 
        +\kappa_L(u)\varphi(u)=0
$$
holds, which implies \eqref{eq:K-bdd}.
Conversely, if \eqref{eq:K-bdd} holds,
\eqref{eq:K-hat-SP} yields that
$
  K(u,v) = \hat{K}_0(u,v)/\hat{\lambda}(u,v)^2
$
holds whenever $v\ne0$.
This implies the Gaussian curvature is bounded.
\end{proof}

As a corollary of Theorem \ref{thm:boundedness},
we prove that the positivity or negativity of $\kappa_L$
affects the shape of the surface here.
In the case of fronts in the Euclidean $3$-space $\R^3$,
it was proved in \cite[Corollary 1.18]{SUY1} that
a cuspidal edge with positive (resp.\ negative)
singular curvature $\kappa_s$ 
looks like positively curved (resp.\ negatively curved).
A similar result holds in the case of 
the L-singular curvature $\kappa_L$:

\begin{corollary}\label{cor:shape}
Let $f:\Sigma\to \R^3_1$ be a mixed type surface
in the Lorentz-Minkowski $3$-space $\R^3_1$,
$p\in LD$ a lightlike point of the first kind,
and $\kappa_L(p)$ the L-singular curvature at $p$. 
If $\kappa_L(p)$ is negative,
then the image of $f$ is saddle shaped near $p$.
On the other hand, 
if $\kappa_L(p)$ is positive,
then the image of $f$ is locally convex near $p$,
namely, locally lying on one side of its affine tangent plane.
\end{corollary}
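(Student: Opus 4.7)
The plan is to analyze the local shape of the image $f(\Sigma)$ near $p$ by a Monge-type Taylor expansion in a specially adapted coordinate system, measuring the displacement of $f$ from the affine tangent plane $\Pi := f(p) + df_p(T_p \Sigma)$ via a distinguished linear functional on $\R^3_1$.

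First, I would fix a specially adapted coordinate system $(U;u,v)$ centered at $p = (0,0)$, so that $E(u,0) = 1$, $F(u,0) = G(u,0) = 0$, and $G_v(u,0) = 1$. Then the tangent subspace $df_p(T_p \Sigma)$ is spanned by the unit spacelike vector $f_u(0,0)$ and the lightlike vector $f_v(0,0)$, which are mutually orthogonal; hence $\Pi$ is a degenerate null $2$-plane in $\R^3_1$. The annihilator of such a subspace inside $(\R^3_1)^*$ is spanned by the metric-dual of a null vector lying in the subspace itself, and I may take
$$
  \ell(X) := \langle X, f_v(0,0) \rangle,
$$
which vanishes on $df_p(T_p\Sigma)$ since $\langle f_u, f_v \rangle = 0$ and $\langle f_v, f_v \rangle = 0$ at $p$. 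With this choice, $f(\Sigma)$ lies locally on one side of $\Pi$ (the ``locally convex'' case) iff $\ell(f(u,v) - f(p))$ has constant sign on a punctured neighborhood of $0$, and it is saddle shaped iff $\ell(f(u,v) - f(p))$ takes both signs in every neighborhood of $0$.

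Next, I would Taylor-expand $\ell(f(u,v) - f(p))$ at $(0,0)$. The linear part vanishes because $F(0,0) = G(0,0) = 0$, so the sign is controlled by the quadratic form
$$
  Q(u,v) := \tfrac{1}{2}\langle f_{uu}, f_v\rangle u^2 + \langle f_{uv}, f_v\rangle uv + \tfrac{1}{2}\langle f_{vv}, f_v\rangle v^2
$$
evaluated at $(0,0)$. Using \eqref{eq:fuufv} together with the adaptedness relation $F_u(0,0) = 0$, and the intrinsic formula \eqref{eq:kappa_L-adap} (simplified by $E(0,0) = G_v(0,0) = 1$), the $u^2$-coefficient equals $\kappa_L(p)$. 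Since $\langle f_{uv}, f_v\rangle = G_u/2$ vanishes at $(0,0)$ (because $G(u,0) \equiv 0$) and $\langle f_{vv}, f_v\rangle(0,0) = G_v(0,0)/2 = 1/2$, I arrive at
$$
  Q(u,v) = \frac{\kappa_L(p)}{2}\, u^2 + \frac{1}{4}\, v^2.
$$

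Finally, I would read off the conclusion: when $\kappa_L(p) > 0$, $Q$ is positive definite, so by the Morse lemma $\ell \circ (f - f(p))$ has a strict local minimum at $p$ and $f(\Sigma)$ lies locally on one side of $\Pi$; when $\kappa_L(p) < 0$, $Q$ has signature $(1,1)$, so the zero locus of $\ell \circ (f - f(p))$ is a pair of smooth curves crossing at $p$ with $\ell \circ (f - f(p))$ alternating sign in the four sectors, realizing the saddle shape. The main obstacle I anticipate is justifying the notion of ``sides of the tangent plane'' when $\Pi$ is degenerate: the natural transversal to $\Pi$ is represented by the null vector $f_v(p)$ lying within $\Pi$ itself, a peculiarity of Lorentzian geometry absent from the Euclidean case; once this framing is accepted, the remainder is a routine Taylor computation in the specially adapted chart.
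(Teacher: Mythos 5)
Your proposal is correct, but it proves the corollary by a genuinely different route than the paper. The paper's proof is indirect: it invokes Theorem \ref{thm:boundedness}~(i) to see that the Lorentzian Gaussian curvature $K$ diverges to $\mp\infty$ at $p$ according to the sign of $\kappa_L(p)$, then cites the fact (from Akamine's work) that $K$ and the Euclidean Gaussian curvature $K_{\rm euc}$ of the same immersion have opposite signs, and finally appeals to the classical Euclidean dichotomy that $K_{\rm euc}(p)>0$ (resp.\ $<0$) forces local convexity (resp.\ saddle shape). You instead argue directly with the height function $\ell\circ(f-f(p))$, $\ell=\inner{\,\cdot\,}{f_v(0,0)}$, whose kernel is exactly the (degenerate) tangent plane, and compute its Hessian in a specially adapted chart to be $\tfrac{\kappa_L(p)}{2}u^2+\tfrac14 v^2$; I checked the coefficients against \eqref{eq:fuufv}, \eqref{eq:kappa_L-adap} and the relations $G_u(0,0)=0$, $G_v(0,0)=1$, and they are right. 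Your argument is self-contained and more elementary — it bypasses Theorem \ref{thm:boundedness} and the external opposite-sign fact entirely — and it yields the sharper quantitative statement that the height over the tangent plane is a nondegenerate quadratic form of signature determined by $\sgn\kappa_L(p)$; the cost is that you must (as you do) justify the meaning of ``sides'' of a degenerate tangent plane, whose annihilator is spanned by the dual of the null vector $f_v(p)$ lying inside the plane itself. The paper's route is shorter given the machinery already in place and ties the corollary to the curvature blow-up phenomenon, but both proofs are valid.
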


\begin{proof}
Denote by $K_{\rm euc}$ the Gaussian curvature 
of $f$ as the surface in the Euclidean space $\R^3$.
It suffices to show that, 
if $\kappa_L(p)$ is positive (resp.\ negative),
then $K_{\rm euc}(p)$ is also positive (resp.\ negative).
By the assertion (i) of Theorem \ref{thm:boundedness},
if $\kappa_L(p)>0$, 
$K$ diverges to $-\infty$ at $p$.
It is known that 
$K$ and $K_{\rm euc}$ have opposite sign
(for example, see \cite{Akamine}).
Hence, $K_{\rm euc}(p)$ is positive.
In the case of $\kappa_L(p)<0$, 
a similar argument implies the desired result.
\end{proof}

\begin{example}\label{ex:N-curved}
Set $f:S^1\times (0,\infty)\to \R^3_1$ as
$$
  f(u,v)=\left(\frac{\cos u}{\cosh v},\frac{\sin u}{\cosh v},v-\tanh v\right),
$$
which is known as Beltrami's pseudosphere
as a surface of constant negative curvature 
$K_{\rm euc}=-1$ in the Euclidean $3$-space $\R^3$.
As a surface in $\R^3_1$,
$\gamma(t):=(t,\log(\sqrt{2}+1))$
is the characteristic curve 
consisting of lightlike points of the first kind.
The image $\hat{\gamma}(t)=f(\gamma(t))$
is written as 
$\hat{\gamma}(t)=(\cos t, \sin t,\sqrt{2}\log(\sqrt{2}+1)-1)/\sqrt{2}.$
By a direct calculation, we have
$$
\kappa_L(t)=-\frac{1}{\sqrt[3]{2}},\quad
\kappa_N(t)=-\sqrt[3]{2},\quad
\kappa_G(t)=0,\quad
\kappa_B(t)=-\frac{4}{5}\sqrt[3]{2}.
$$
In particular, $\kappa_L$ is negative, and hence, 
Corollary \ref{cor:shape} yields that 
this surface is saddle shaped near 
the image of the lightlike point set $LD$
(see Figure \ref{fig:PosNeg}).
\end{example}

\begin{figure}[htb]
\begin{center}
 \begin{tabular}{{c}}
  \resizebox{3cm}{!}{\includegraphics{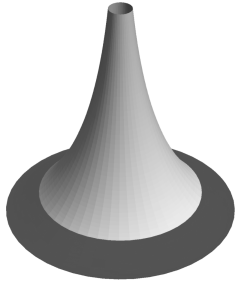}}
 \end{tabular}
 \label{fig:PosNeg}
 \caption{
 The figure of Beltrami's pseudosphere.
 As a surface in $\R^3$,
 Beltrami's pseudosphere 
 has negative curvature.
 The dark (resp.\ bright) part is the image 
 of the spacelike (resp.\ timelike) point sets 
 of the surface, and
 the boundary points of the dark and bright parts
 consist of the image of the lightlike point sets $LD$.
 As we see in Example \ref{ex:N-curved},
 every lightlike point is of the first kind
 and has the negative lightlike singular curvature 
 $\kappa_L(p)=-1/\sqrt[3]{2}$.
 This examples verifies Corollary \ref{cor:shape}.
 }
\end{center}
\end{figure}

Now, we prove Theorem \ref{thm:introB} in the introduction.

\begin{proof}[Proof of Theorem \ref{thm:introB}]
The second assertion of Theorem \ref{thm:introB}
is a direct conclusion of (iii) in Theorem \ref{thm:boundedness}.
Hence, we prove the first assertion 
of Theorem \ref{thm:introB} here.
Let $\gamma(t)$ $(|t|<\ep)$ be a
characteristic curve passing through 
$p=\gamma(0)$.
Assume that $p$ is not of the first kind.
If $p$ is a lightlike point of the admissible second kind,
then there exists a sequence $\{p_n\}_{n\in \N}$
consisting of lightlike points of the first kind
such that $\lim_{n\to \infty}p_n=p$.
The assertion (i) of Theorem \ref{thm:introA}
yields that
the L-singular curvature $\kappa_L$
cannot be bounded near $p =\gamma(0)$.
However, 
since $K$ is bounded on a neighborhood of $p$,
Theorem \ref{thm:boundedness} implies that 
$\kappa_L(p_n)=0$ for sufficiently large $n\in \N$,
which is a contradiction.
Hence, $p$ cannot be of the admissible second kind.

Thus, suppose that $p$ is an $L_{\infty}$-point.
Then, there exists $\ep >0$ such that
$\gamma(t)$ is not of the first kind for $|t|<\ep$.
Hence, $\gamma'(t)$ gives a null vector field along $\gamma(t)$.
If we take a coordinate system $(U;u,v)$ centered at 
$p=(0,0)$ such that $\eta:=\partial_v$ is a null vector field,
the image of $\gamma(t)$ coincides with the $v$-axis.
Changing the parameter $t$ if necessary, 
we may assume that $\gamma(t)=(0,t)$ holds.
Since $\eta=\partial_v$ is a null vector field,
we have $F(0,v)=G(0,v)=0$.
Therefore, there exist smooth functions 
$\hat{F}$, $\hat{G}$ 
on a neighborhood of $p=(0,0)$ such that 
$F(u,v) = u\, \hat{F}(u,v)$ and
$G(u,v) = u\, \hat{G}(u,v)$.
Then we have
$
  \lambda
  =EG-F^2
  = u(E\hat{G}-u\hat{F}^2).
$
Hence $\lambda_v(0,v)=0$ holds.
The non-degeneracy of $p$ implies 
$
  0\ne \lambda_u(0,v)=E(0,v)\hat{G}(0,v).
$
By \eqref{eq:K-hat}, we have 
$$
  \hat{K}(0,v)
  = \frac1{4}E(0,v)\hat{G}(0,v)^2>0.
$$
Therefore, the Gaussian curvature $K=\hat{K}/\lambda^2$ 
diverges to $\infty$ at $p=(0,0)$,
which is a contradiction.
Thus, $p$ must be of the first kind.
\end{proof}

In the latter part of the proof of Theorem \ref{thm:introB},
we have shown the following.

\begin{corollary}\label{cor:K-infinity}
The Gaussian curvature $K$ diverges to $\infty$ at an $L_{\infty}$-point.
\end{corollary}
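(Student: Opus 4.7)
The plan is to extract and repackage the computation already carried out in the latter half of the proof of Theorem~\ref{thm:introB}. Since $p$ is an $L_\infty$-point, by definition there is a neighborhood $U$ of $p$ in which every point of $LD$ is a lightlike point of the second kind, and along the characteristic curve $\gamma(t)$ through $p$ the vectors $\gamma'(t)$ and $\eta(t)$ are linearly dependent, so $\gamma'(t)$ itself is a null direction.

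First I would choose an admissible coordinate system $(U;u,v)$ centered at $p=(0,0)$ in which $\eta=\partial_v$ is the null vector field (which exists by Remark~\ref{rem:admissible}). Because $\gamma'$ is null along $\gamma$, the image of $\gamma$ can be taken to be the $v$-axis, and then $F(0,v)=G(0,v)=0$. Applying the division lemma, I write $F=u\hat F$ and $G=u\hat G$ for smooth functions $\hat F,\hat G$, so that
\[
\lambda=EG-F^2 = u\bigl(E\hat G-u\hat F^2\bigr).
\]
In particular $\lambda_v(0,v)\equiv 0$, and the non-degeneracy hypothesis at points of the $v$-axis forces $\lambda_u(0,v)=E(0,v)\hat G(0,v)\neq0$.

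Next I would plug these expansions into formula~\eqref{eq:K-hat} for $\hat K=\lambda^2 K$. Along the $v$-axis the two terms carrying factors of $\lambda$ vanish, and the surviving term reduces to
\[
\hat K(0,v)=\tfrac{1}{4}\,E(0,v)\,\hat G(0,v)^{2},
\]
which is strictly positive by the preceding paragraph. Hence $\hat K$ is bounded away from zero on the $v$-axis while $\lambda^2\to 0$ as $(u,v)\to(0,0)$ away from $LD$, so $K=\hat K/\lambda^2$ diverges to $+\infty$ at $p$.

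There is no genuine obstacle here, since the calculation has essentially been done inside the proof of Theorem~\ref{thm:introB}. The only mildly delicate point is making sure that the reduction of \eqref{eq:K-hat} along $u=0$ really kills all terms involving $\phi_{ij}$ and $c_{\bar g}$ (they appear multiplied by $\lambda$ or $\lambda^2$, which both vanish on the $v$-axis) and that the spacelike factor $\sqrt{E}$ remains smooth on a neighborhood of $p$, which follows from $E(0,0)=|\eta f|^2>\!0$ being impossible to vanish since $\partial_u$ must span a spacelike direction complementary to the null $\partial_v$ at $p$; more concretely, $E(0,0)\hat G(0,0)=\lambda_u(0,0)\ne0$ is exactly what we need.
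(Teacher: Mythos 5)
Your proposal is correct and is essentially identical to the paper's own argument: the paper proves this corollary precisely by the latter part of the proof of Theorem~\ref{thm:introB}, using the same coordinate system with $\eta=\partial_v$ null and $LD$ the $v$-axis, the same factorizations $F=u\hat F$, $G=u\hat G$, the same identity $\lambda_u(0,v)=E(0,v)\hat G(0,v)\neq 0$ from non-degeneracy, and the same evaluation $\hat K(0,v)=\tfrac14 E(0,v)\hat G(0,v)^2>0$ from \eqref{eq:K-hat}.
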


As introduced in Fact \ref{fact:HKKUY},
if the mean curvature $H$ is bounded on a neighborhood of 
a non-degenerate lightlike point $p$,
then $p$ is an $L_{\infty}$-point.
Then, Corollary \ref{cor:K-infinity} yields that
the Gaussian curvature $K$ diverges to $\infty$ at $p$.
This assertion was proved in \cite[Proposition 4.8]{UY_geloma}
(cf.\ \cite{Akamine} for the case of $H=0$).

Now, we prove Corollary \ref{cor:introC} in the introduction.
In the case of fronts in the Euclidean $3$-space $\R^3$,
it was proved in \cite[Corollary B]{NUY} that
the limiting normal curvature $\kappa_{\nu}$ of 
a cuspidal edge is an extrinsic invariant.
Similarly, in \cite{Honda-deform},
it is proved that
the lightlike normal curvature $\kappa_N$
at a lightlike point $p$ of the first kind
is an extrinsic invariant.
More precisely,
let $f : (\Sigma,p)\to \R^3_1$
be a germ of a mixed type surface
in the Lorentz-Minkowski $3$-space $\R^3_1$.
Assume that $f$ is real analytic, 
$p$ is a lightlike point of the first kind,
and
$
  \kappa_L(p)\ne0.
$
Then, there exists a real analytic germ of a mixed type surface
$\bar{f} : (\Sigma,p)\to \R^3_1$ such that 
\begin{itemize}
\item
$f$ and $\bar{f}$ has the same first fundamental form
(namely, $\bar{f}$ is isometric to $f$),
and 
\item
$\kappa_N(p)\ne\bar{\kappa}_N(p)$, 
where $\kappa_N(p)$ (resp.\ $\bar{\kappa}_N(p)$) 
is the lightlike normal curvature of $f$ (resp.\ $\bar{f}$).
\end{itemize}
However, in the case of $\kappa_L=0$,
the existence of such an $\bar{f}$ cannot be proved 
in the method of \cite{Honda-deform}.
We also remark that the lightlike geodesic torsion
$\kappa_G$ is also an extrinsic invariant \cite{Honda-deform}.

\begin{proof}[Proof of Corollary \ref{cor:introC}]
Take a specially adapted coordinate system
$(U;u,v)$ as in the proof of Theorem \ref{thm:boundedness}.
If $\kappa_L(u)=0$, \eqref{eq:K-hat-SP} yields that
$$
  \hat{K}(u,v)
  = \frac1{2}v \left( \kappa_N(u) - \kappa_B(u)\right)
    + v^2 \hat{K}_0(u,v),
$$
and hence 
$
  \kappa_N(u)
  = \kappa_B(u) + 2 \hat{K}_v(u,0)
$
holds.
Since $\kappa_B$ and $\hat{K}$ are intrinsic, 
we may conclude that 
$\kappa_N$ is also an intrinsic invariant.
\end{proof}

\begin{remark}\label{rem:flat-CE}
Such a phenomenon does not occur 
in the case of wave fronts.
Namely, there exist examples of wave fronts such that
the singular curvature $\kappa_s$ is identically zero 
$\kappa_s=0$ along the singular set,
but $\kappa_{\nu}$ is still extrinsic.
For example, the real analytic germ of a wave front
$f:(\R^2,0)\to \R^3$ defined by
$f(u,v)=((1+v^3)\cos u,(1+v^3)\sin u,v^2)$
has cuspidal edge with $\kappa_s=0$ along the singular set $v=0$.
We can check that $\kappa_\nu(0)\ne0$.
And hence, by the method used in the proof of \cite[Corollary B]{NUY},
we can prove the existence of 
a real analytic germ of a wave front
$\bar{f}:(\R^2,0)\to \R^3$
such that is $\bar{f}$ is isometric to $f$ but 
$\kappa_\nu(0)\ne\bar{\kappa}_\nu(0)$.
\end{remark}

In the rest of this section,
we shall consider the behavior of umbilic points 
of mixed type surfaces
with bounded Gaussian curvature.
Let $f: \Sigma \to M^3$ be a mixed type surface.
On the timelike point set 
$\Sigma_-:=\{p\in \Sigma \,;\, (ds^2)_p \text{ is indefinite} \}$,
the restriction $f_-:=f|_{\Sigma_-}$ is a timelike surface.
Take a unit normal vector field $\nu$ of $f_-$.
The shape operator $S:=(df_-)^{-1}\circ(-d\nu)$ 
satisfies one of the following:
\begin{itemize}
\item[(i)]
$S$ is diagonalizable over $\R$.
(In this case, $H^2-K\geq 0$ holds.)
\item[(ii)]
$S$ is diagonalizable over $\C\setminus\R$.
(In this case, $H^2-K< 0$ holds.)
\item[(iii)]
$S$ is not diagonalizable over $\C$.
(In this case, $H^2-K=0$ holds.)
\end{itemize}
Then, 
a point $p\in \Sigma_-$ is umbilic if and only if 
$S$ satisfies the condition (i) and $H(p)^2-K(p)=0$ holds.
In the case (i) (resp.\ (ii)),
the principal curvatures are real valued
(resp.\ non-real complex valued).

As we saw earlier,
if $H$ is bounded on a neighborhood of 
an non-degenerate lightlike point $p$,
then $K$ diverges to $\infty$ at $p$.
Namely, $H^2-K$ diverges to $-\infty$ at $p$.
Therefore, we have:
\[
\begin{minipage}{0.9\linewidth}
{\it 
If 
$H$ 
is bounded on a neighborhood of 
an non-degenerate lightlike point $p$,
then umbilical points do not accumulate to $p$.
Moreover, there exists an open neighborhood 
$U$ of $p$ such that 
the principal curvatures are non-real complex valued
on the timelike point set $U_-:=\Sigma_-\cap U$.
}\end{minipage}
\]

In the case of bounded Gaussian curvature,
we have the following.

\begin{corollary}\label{cor:umbilic}
If the Gaussian curvature 
$K$ is bounded on a neighborhood of 
an non-degenerate lightlike point $p$,
then umbilical points do not accumulate to $p$.
Moreover, there exists an open neighborhood 
$U$ of $p$ such that 
the principal curvatures are real valued
on the timelike point set $U_-:=\Sigma_-\cap U$.
\end{corollary}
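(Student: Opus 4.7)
The plan is to combine Theorem~\ref{thm:introB} with a direct asymptotic analysis of the mean curvature $H$ at a lightlike point of the first kind, and then to use the algebraic characterization of umbilicity in terms of $H$ and $K_{\mathrm{ext}}$. By Theorem~\ref{thm:introB}, the boundedness of $K$ near $p$ forces $p$ to be of the first kind, so I would work in a specially adapted coordinate neighborhood $(U;u,v)$ centered at $p$, which by Remark~\ref{rem:admissible} may be assumed admissible with associated vector field $\psi$ and sign $\sigma=\pm 1$ as in \eqref{eq:psi-pm}. Inserting $E(u,0)=1$, $F(u,0)=G(u,0)=0$ and $G_v(u,0)=1$ into \eqref{eq:LMN-hat} gives
\begin{equation*}
  \hat{h}_{11}(u,0)=\sigma\kappa_L(u),\qquad
  \hat{h}_{12}(u,0)=0,\qquad
  \hat{h}_{22}(u,0)=\tfrac{\sigma}{2}.
\end{equation*}
Writing $\lambda=v\hat{\lambda}$ with $\hat{\lambda}(u,0)=1$ and using $2H\lambda\sqrt{|\lambda|}=E\hat{h}_{22}-2F\hat{h}_{12}+G\hat{h}_{11}$, I would evaluate the right-hand side at $v=0$ to obtain the nonvanishing value $\sigma/2$. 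Combined with $|\lambda|^{3/2}=|v|^{3/2}(1+O(v))^{3/2}$, this yields a uniform lower bound $|H(u,v)|\ge c/|v|^{3/2}$ on a sufficiently small neighborhood of $p$, and in particular $H^{2}\to +\infty$ as one approaches $LD$.

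Next, I would combine this with the hypothesis that $K$ is bounded. The function $c_{\bar{g}}$ of \eqref{eq:ambient} is smooth on $M^{3}$ and therefore bounded near $f(p)$, so the relation $K_{\mathrm{ext}}=\pm(K-c_{\bar{g}})$ from \eqref{eq:GaussCurvature} shows that $K_{\mathrm{ext}}$ is bounded on some neighborhood of $p$. Together with $H^{2}\to +\infty$ this gives $H^{2}-K_{\mathrm{ext}}\to +\infty$ at $p$, and hence there is an open neighborhood $U$ of $p$ on which $H^{2}-K_{\mathrm{ext}}>0$ throughout $U\setminus LD$.

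Finally, I would translate this inequality into the statement on principal curvatures. The principal curvatures are the roots of $t^{2}-2Ht+K_{\mathrm{ext}}=0$, namely $H\pm\sqrt{H^{2}-K_{\mathrm{ext}}}$, so $H^{2}-K_{\mathrm{ext}}>0$ on $U\setminus LD$ forces them to be real and distinct on both $U_{+}$ and $U_{-}$; in particular the principal curvatures are real valued on $U_{-}=\Sigma_{-}\cap U$. An umbilic point would require the two roots to coincide, that is $H^{2}=K_{\mathrm{ext}}$, which contradicts the strict inequality $H^{2}-K_{\mathrm{ext}}>0$ on $U\setminus LD$; hence no umbilic point lies in $U\setminus LD$. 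The main obstacle is to make the divergence of $|H|$ uniform in a full neighborhood of $p$, rather than merely along sequences. This relies crucially on the nonvanishing of $\hat{h}_{22}(p)=\sigma/2$, which is independent of $\kappa_{L}(p)$; thus the leading-order asymptotic of $H$ does not degenerate even in the bounded-$K$ regime where $\kappa_{L}=0$ along the characteristic curve, as characterized by Theorem~\ref{thm:boundedness}.
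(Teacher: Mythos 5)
Your proof is correct, and its overall skeleton coincides with the paper's: invoke Theorem~\ref{thm:introB} to force $p$ to be of the first kind, show $H^{2}\to+\infty$ at $p$, conclude that the discriminant $H^{2}-K_{\mathrm{ext}}$ of the characteristic polynomial of the shape operator is eventually positive, and read off both the absence of umbilics and the reality of the principal curvatures on $U_{-}$. The one genuine difference is how the divergence of $H$ is obtained. The paper simply cites Fact~\ref{fact:HKKUY} (contrapositive: since $p$ is of the first kind, not an $L_{\infty}$-point, $H$ is unbounded near $p$) and then asserts that $H^{2}$ diverges to $\infty$; strictly speaking unboundedness only yields divergence along some sequence, so the paper's step is slightly loose. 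Your computation in a specially adapted coordinate system, giving $\hat{h}_{11}(u,0)=\sigma\kappa_L(u)$, $\hat{h}_{12}(u,0)=0$, $\hat{h}_{22}(u,0)=\sigma/2$ and hence the uniform lower bound $|H|\ge c\,|v|^{-3/2}$, is self-contained, establishes the genuine (uniform) divergence of $H^{2}$ needed for the conclusion, and correctly isolates the reason the estimate survives the regime $\kappa_L\equiv 0$. You are also more careful than the paper in distinguishing $K_{\mathrm{ext}}$ from $K$ via the Gauss equation and the bounded term $c_{\bar g}$; the paper writes $H^{2}-K$ where the discriminant is really $H^{2}-K_{\mathrm{ext}}$, a harmless but real imprecision that your argument avoids. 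In short: same strategy, but your version of the key analytic input is sharper and patches a small gap in the published proof, at the cost of a coordinate computation that the paper outsources to Fact~\ref{fact:HKKUY}.
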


\begin{proof}
By Theorem \ref{thm:introB}, $p$ must be of the first kind.
By Fact \ref{fact:HKKUY},
the mean curvature $H$ is unbounded near $p$.
Namely, 
$H^2$ diverges to $\infty$ at $p$.
Hence, 
so does $H^2-K$.
Thus, there exists an open neighborhood 
$U$ of $p$ such that
$H^2-K>0$ holds on $U_*:=U_+\cup U_-$,
which gives the desired result.
\end{proof}

\section{Monge form for mixed type surfaces}
\label{sec:monge}
A Monge form of wave fronts in $\R^3$ at the cuspidal edge singularity
was given in \cite{MS}.
We show the following proposition which can be
regarded as a Monge form for mixed type surface
at a lightlike point of the first kind.
\begin{proposition}\label{prop:monge}
Let $f: \Sigma\to \R^3_1$ be a mixed type surface,
and let
$p\in \Sigma$ be a lightlike point of the first kind
satisfying $f(p)=0$.
Then there exist 
a local coordinate neighborhood $((U;u,v),\psi)$ of $p$,
a neighborhood $V\subset\R^3_1$ of $0$ and
a Lorentz motion $\Psi:V\to\Psi(V)$ 
such that 
{\allowdisplaybreaks
\begin{align}
\label{eq:monge}
\begin{aligned}
&\Psi\circ f\circ\psi^{-1}(u,v)\\
&\hspace{2mm}
=\Bigg(
u+uva_1(u)+uv^2a_3(u)+v^3b_1(u,v),\ 
v+u^2a_2(u),\\
&\hspace{4mm}
\int_0^u
\dfrac{u(a_1(u)+2a_2(u)+ua_2'(u))}
{\sqrt{u^2a_1(u)^2+1}}\,du
+v\sqrt{u^2a_1(u)^2+1}+v^2b_2(u,v)
\Bigg),
\end{aligned}
\end{align}}%
where 
$a_1,a_2,a_3$ are functions of one variable,
$a_2'(u)=da_2(u)/du$,
and
$b_1,b_2$ are functions of two variables satisfying
$b_2(0,0)=1/4$.
\end{proposition}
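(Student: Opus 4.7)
The plan is to normalize the map $f$ at $p$ by a Lorentz motion together with source rescalings (and reflections if needed), then construct coordinates $(u,v)$ adapted to the characteristic curve, and finally verify that the image of $f$ admits the claimed expansion by repeated use of Hadamard's lemma.

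\textbf{Normalization step.} After translating $\R^3_1$ so that $f(p) = 0$, rotate and rescale so that $f_u(0,0) = e_1$ and $f_v(0,0) = e_2 + e_3$: this is possible because $\hat\gamma'(0)$ is spacelike (as $p$ is of the first kind) and $df_p(\eta_p)$ is null and orthogonal to $\hat\gamma'(0)$. The remaining one-parameter family of Lorentz boosts in the $(y,z)$-plane, combined with a compensating rescaling of source $v$ that preserves $f_v(0,0) = e_2 + e_3$, adjusts the value of $\inner{f_v(0,0)}{f_{vv}(0,0)} = G_v(0,0)/2$; applying a reflection $(y,z)\mapsto(-y,-z)$ if necessary, we choose the boost parameter so that $G_v(0,0) = -1$, which is the sign convention demanded by \eqref{eq:monge} (as can be verified by direct computation from the stated formula).

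\textbf{Reading off $a_1$ and $a_2$.} In these normalized coordinates, the characteristic curve in $\R^3_1$ is parametrized by its first Minkowski coordinate as $\hat\gamma(u) = (u, y(u), z(u))$ with $y, z$ vanishing to order two at $u=0$. Set $a_2(u) := y(u)/u^2$ (smooth by Taylor's theorem), and define $a_1(u)$ as the unique smooth solution of $z'(u)\sqrt{u^2 a_1^2 + 1} = u(a_1 + 2a_2 + u a_2')$ with initial value $a_1(0) = z''(0) - y''(0)$; the spacelikeness $|\hat\gamma'|^2 > 0$ ensures the relevant discriminant is positive, making this ODE solvable for a smooth $a_1$. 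These choices force the integral term in the third component of \eqref{eq:monge} to reproduce $z(u)$ along $v = 0$.

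\textbf{Chart construction and verification.} Define the source chart $\psi$ by declaring that $v = 0$ corresponds to the characteristic curve, $u$ restricts there to the first Minkowski coordinate, and $v = f^2 - u^2 a_2(u)$ off the curve; this makes $F^2(u,v) = v + u^2 a_2(u)$ automatic. Two applications of Hadamard's lemma to $F^1 - u$ (which vanishes on $v=0$), combined with the computation $F^1_v(u,0) = u a_1(u)$ (coming from the null identity for $F_v(u,0)$ together with $F^2_v(u,0) = 1$), give $F^1 = u + uv a_1(u) + v^2 H(u,v)$ for some smooth $H$. Using the remaining second-order freedom in the extension of the $u$-coordinate off the characteristic curve to impose $F^1_{vv}(0,0) = 0$, a further Hadamard step produces the desired structure $F^1 = u + uv a_1 + uv^2 a_3 + v^3 b_1$. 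Analogously, $F^3(u,v)$ equals the integral term at $v=0$, has $v$-derivative $\sqrt{u^2 a_1^2 + 1}$ on the characteristic curve (again by the null identity), so Hadamard yields $F^3 = (\text{integral}) + v\sqrt{u^2 a_1^2+1} + v^2 b_2(u,v)$ with $b_2$ smooth. Finally, $F^2_{vv}(0,0) = 0$ automatically from the formula for $F^2$, and $F^1_{vv}(0,0) = 0$ by construction, so the Lorentzian identity $\inner{F_v(0,0)}{F_{vv}(0,0)} = G_v(0,0)/2 = -1/2$ forces $F^3_{vv}(0,0) = 1/2$, that is $b_2(0,0) = 1/4$.

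\textbf{Main obstacle.} The delicate point is the interlocking of the Lorentz normalization $G_v(0,0) = -1$ with the second-order freedom in the source chart. Once $F^2 = v + u^2 a_2$ is fixed, the only remaining second-order freedom is the extension of the $u$-coordinate off the characteristic curve, giving exactly one real parameter; this is precisely what is needed to impose $F^1_{vv}(0,0) = 0$, and once this is done the condition $b_2(0,0) = 1/4$ follows from the surface-intrinsic relation rather than requiring further adjustment. Verifying that these normalizations can be achieved simultaneously---in particular, that the sign of $G_v(0,0)$ can always be set to $-1$ by the Lorentz motion and reflection---is the core technical hurdle.
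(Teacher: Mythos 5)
Your overall strategy is the same as the paper's (normalize by a Lorentz motion so that $f_u(0,0)=\vect{e}_1$, $f_v(0,0)=\vect{e}_2+\vect{e}_3$, use the degeneracy conditions $F(u,0)=G(u,0)=0$ to pin down the third component, apply Hadamard's lemma repeatedly, and use one remaining scaling/boost parameter to force $b_2(0,0)=1/4$; the paper realizes that last step by leaving $f_v(0,0)=(0,c,c)$ with $c$ free and choosing $c=2\sqrt{|f_{33}(0,0)|}$ at the end, which is equivalent to your up-front normalization $G_v(0,0)=-1$). However, there is a concrete gap in your chart construction: you fix $u$ only along the characteristic curve and set $v=f^2-u^2a_2(u)$, but every subsequent step --- ``the null identity for $F_v(u,0)$'', the formula $F^3_v(u,0)=\sqrt{u^2a_1^2+1}$, and the final computation $\inner{F_v(0,0)}{F_{vv}(0,0)}=G_v(0,0)/2$ --- uses that $\partial_v$ spans the null space along $v=0$, i.e.\ that the coordinates are \emph{adapted}. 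Since $\partial_v$ is tangent to the level sets of $u$, this holds only if those level sets are tangent to the null direction $\eta$ along the characteristic curve ($du(\eta)=0$ there), a first-order constraint on the extension of $u$ that you never impose; with a generic extension (e.g.\ $u=f^1$ everywhere) the identities you invoke are false. The paper avoids this by starting from an adapted coordinate system and verifying at each change of variables that adaptedness is preserved.

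Two further points. First, your determination of $a_1$ from the curve data $(y,z)$ alone is shakier than you suggest: the relation $z'\sqrt{u^2a_1^2+1}=ua_1+y'$ is an algebraic (not differential) equation in $a_1$, its discriminant is $(z'/u)^2|\hat{\gamma}'|^2$, which vanishes wherever $z'$ does rather than being positive, and away from $u=0$ the equation can have two branches; ``the unique smooth solution'' therefore needs an implicit-function-theorem argument near $u=0$ that you do not give. The paper's order of operations is cleaner: $a_1(u)$ is read off as $f^1_v(u,0)/u$ (smooth by Hadamard since $f^1_v(0,0)=0$), and the integral expression for the third component is then \emph{derived} from $F(u,0)=G(u,0)=0$, so no equation ever has to be solved. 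Second, you explicitly label the interlocking of the second-order normalizations as ``the core technical hurdle'' and resolve it only by a parameter count; in particular you do not verify that the adjustment $u\mapsto u+\beta v^2$ used to kill $F^1_{vv}(0,0)$, together with the induced redefinition of $v$, preserves the first-order structure already established. The paper sidesteps this entirely: after the change $\tilde{u}=f_{11}(u)+v^2f_{13}(u,v)$ one gets $f^1=\tilde{u}+\tilde{v}f_{12}(g_1(\tilde{u},\tilde{v}))$ with $f_{12}(0)=0$, so $h_1(0,\tilde{v})=O(\tilde{v}^2)$ and the vanishing of $\partial_v h_1(0,0)$ --- hence the shape $uva_1+uv^2a_3+v^3b_1$ --- comes for free rather than by tuning a parameter. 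As written, your argument does not close.
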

We remark that a coordinate system satisfying 
the conditions in the above proposition is adapted.
\begin{proof}
We take an adapted coordinate neighborhood $(u,v)$ near $p$.
By a Lorentz motion and a suitable change of $u\mapsto t\,u$ $(t\in \R)$, 
we may assume that
$f_u(0,0)=(1,0,0)$, 
$f_v(0,0)=(0,c,c)$ $(c>0)$.
Remark that $c$ can be chosen arbitrarily.
We set $f(u,v)=(f_1(u,v),f_2(u,v),f_3(u,v))$.
Then there exist 
functions of one variable $f_{11},f_{12}$ and 
a function of two variables $f_{13}$ such that
$f_1(u,v)=f_{11}(u)+vf_{12}(u)+v^2f_{13}(u,v)$.
We set a new coordinate system $(\tilde u,\tilde v)$ by
$\tilde u=f_{11}(u)+v^2f_{13}(u,v)$ and
$\tilde v=v$.
Then we see $\{v=0\}=\{\tilde v=0\}$ and
$\partial_v=\partial_{\tilde v}$ on $\{v=0\}$.
Thus $(\tilde u,\tilde v)$ is an adapted coordinate system.
Setting $u=g_1(\tilde u,\tilde v)$,
we see the first component of 
$f(\tilde u,\tilde v)$ is
$
\tilde u+\tilde vf_{12}(g_1(\tilde u,\tilde v)).
$
Thus we may assume that $f(u,v)$ is given by the form
$$
f(u,v)
=
(u+vf_{12}(u,v),f_2(u,v),f_3(u,v)).
$$
By the same argument together with
$(f_2)_u(0,0)=0$, there exist functions $f_{21}$ and
$f_{22}$ such that
$f_2(u,v)=u^2f_{21}(u)+cvf_{22}(u,v)$
$(f_{22}(0,0)=1)$.
We set a new coordinate system $(\tilde u,\tilde v)$ by
$\tilde u=u$ and
$\tilde v=vf_{22}(u,v)$.
Then
$(\tilde u,\tilde v)$ is an adapted coordinate system,
and the first and the second components of 
$f(\tilde u,\tilde v)$ are 
$\tilde u+\tilde v(h_1(\tilde u,\tilde v))$
and
$\tilde u^2f_{21}(\tilde u)+c\tilde v$,
respectively.
Thus we may assume that $f(u,v)$ is given by the form
$$
f(u,v)
=
(u+uva_1(u)+uv^2a_3(u)+v^3b_1(u,v),cv+u^2a_2(u),f_3(u,v)).
$$
Since $(u,v)$ is an adapted coordinate system,
$df(\partial_v)$ is lightlike on $\{v=0\}$, 
it holds that 
$
F(u,0)=G(u,0)=0.
$
Moreover, since $p$ is non-degenerate, $G_v(0,0)\ne0$.
Thus
\begin{align*}
&ua_1(u)+uc(2a_2(u)+ua_2'(u))-(f_3)_u(u,0)(f_3)_v(u,0)=0,\\
&u^2a_1(u)^2+c^2-((f_3)_u(u,0))^2=0.
\end{align*}
Since $(f_3)_v(0,0)=c>0$, we have
$
(f_3)_v(u,0)=\sqrt{u^2a_1(u)^2+c^2}.
$
On the other hand,
$$
(f_3)_u(u,0)
=
\dfrac{ua_1(u)+uc(2a_2(u)+ua_2'(u))}{(f_3)_v(u,0)}
$$
holds. Thus setting
$
f_3(u,v)=
f_{31}(u)+vf_{32}(u)+v^2f_{33}(u,v),
$
we have 
$f_{32}(u)=\sqrt{u^2a_1(u)^2+c^2}$,
and
$$
f_{31}(u)=
\int_0^u \dfrac{ua_1(u)+uc(2a_2(u)+ua_2'(u))}{\sqrt{u^2a_1(u)^2+c^2}}\,
du.$$
Thus we may assume that $f(u,v)$ is given by the form
\begin{align*}
f(u,v)
=&
\Bigg(u+uv\alpha_1(u)+uv^2\alpha_3(u)+v^3\beta_1(u,v),\ 
cv+u^2\alpha_2(u),\\
&\hspace{-5mm}
\int_0^u \dfrac{u\alpha_1(u)+uc(2\alpha_2(u)+u\alpha_2'(u))}
{\sqrt{u^2\alpha_1(u)^2+c^2}}\,du
+v\sqrt{u^2\alpha_1(u)^2+c^2}+v^2f_{33}(u,v)\Bigg),
\end{align*}
where $G_v(0,0)\ne0$, it holds that $f_{33}(0,0)\ne0$.
We set 
$k=\sqrt{|f_{33}(0,0)|}$ and
$\sigma=\sgn(f_{33}(0,0))=\pm1$.
Setting $\overline{v}=2\sigma k\,v$, we see
{\allowdisplaybreaks
\begin{align}
\label{eq:normal200}
\begin{aligned}
f(u,\overline{v})
=&
\Bigg(u + u\overline{v}\frac{\alpha_1(u)}{2\sigma k}
+ u\overline{v}^2 \frac{\alpha_3(u)}{4k^2}
+\overline{v}^3
\frac{1}{8\sigma k^3}\beta_1\left(u,\frac{\overline{v}}{2\sigma k}\right),\\
&\frac{1}{2\sigma k}c\overline{v}+u^2\alpha_2(u),\ 
\int_0^u \dfrac{u\alpha_1(u)+uc(2\alpha_2(u)+u\alpha_2'(u))}
{\sqrt{u^2\alpha_1(u)^2+c^2}}\,du\\
&\hspace{15mm}+\frac{1}{2\sigma k}\overline{v}\sqrt{u^2\alpha_1(u)^2+c^2}
+\overline{v}^2
\frac{1}{4k^2}f_{33}\left(u,\frac{\overline{v}}{2\sigma k}\right)
\Bigg).
\end{aligned}
\end{align}}%
Finally, setting 
\begin{align*}
&c=2k,\quad
a_1(u)=\dfrac{\alpha_1(u)}{2\sigma k},\quad
a_2(u)=\sigma \alpha_2(u),\quad
a_3(u)=\dfrac{\alpha_3(u)}{4k^2},\\
&\hspace{10mm}
b_1(u,\overline{v})
=\frac{1}{8\sigma k^3}
   \beta_1\left(u,\frac{\overline{v}}{2\sigma k}\right),\quad
b_2(u,\overline{v})
= \frac{1}{4\sigma k^2}
   f_{33}\left(u,\frac{\overline{v}}{2\sigma k}\right),
\end{align*}
the right-hand side of \eqref{eq:normal200} is
\begin{align*}
&\Bigg(u+u\overline{v}a_1(u)
+u\overline{v}^2a_3(u)
+\overline{v}^3b_1(u,\overline{v}),\ 
\sigma(\overline{v}+u^2a_2(u)),\\
&\hspace{2mm}
\sigma\left(\int_0^u \dfrac{u(a_1(u)+2a_2(u)+ua_2'(u))}
{\sqrt{u^2a_1(u)^2+1}}\,du
+\overline{v}\sqrt{u^2a_1(u)^2+1}
+\overline{v}^2
b_2(u,\overline{v}) \right)\Bigg),
\end{align*}
and considering a $\pi$-rotation around the
axis of the first coordinate if necessary,
$f$ can be written in the desired form.
\end{proof}

Let $f: \R^2 \to \R^3_1$ be the mixed type surface
given by the right hand side of \eqref{eq:monge}.
Since the coordinate system $(u,v)$
is adapted, applying
Proposition \ref{prop:curvature-G} and
Proposition \ref{prop:kappa_B_adapted},
we have
\begin{gather*}
  \kappa_L = a_1(0),\quad
  \kappa_N = -\frac{a_1(0)}{2}-2a_2(0),\quad
  \kappa_G = \frac{4}{3} (b_2)_u(0,0),\\
  \kappa_B = -a_2(0)+\frac{a_1(0)}{5}  \left(-5a_1(0)+ 12 (b_2)_v(0,0)-2\right)
\end{gather*}
at the origin.
On the other hand, $f$ can be expanded in the following form
$$
  f(u,v)=
  \left(u+a_1(0)u v,
  v+a_2(0)u^2,
  v+\left(\frac{1}{2}a_1(0)+a_2(0) \right)u^2+\frac1{4}v^2\right)
  +h(u,v),
$$
where $h(u,v)$ consists of the terms whose degrees 
are higher than $2$.
Thus, we have the following:

\begin{proposition}
Let $f,g: (\R^2,(0,0)) \to \R^3_1$ 
be germs of mixed type surfaces.
If their $L$-singular curvatures and 
$L$-normal curvatures coincide at $(0,0)$,
then there exist coordinate system $(u,v)$
and an isometry $A$ of $\R^3_1$ such that 
$$
  j_0^2f(u,v) = j_0^2(A\circ g)(u,v),
$$
where $j_0^2$ stands for the 
$2$-jet of the maps with respect to $(u,v)$ at $(0,0)$.
\end{proposition}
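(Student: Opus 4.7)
The plan is to invoke the Monge normal form of Proposition \ref{prop:monge} and read the result off directly from its second-order data. Since $\kappa_L$ and $\kappa_N$ appear in the hypothesis, both $f$ and $g$ necessarily have a lightlike point of the first kind at the origin, so Proposition \ref{prop:monge} applies to each. It furnishes local source coordinate changes $\psi_f,\psi_g:(\R^2,0)\to(\R^2,0)$ and Lorentz motions $\Psi_f,\Psi_g$ of $\R^3_1$ such that the normalized germs
\[
\tilde f := \Psi_f\circ f\circ\psi_f^{-1}, \qquad \tilde g := \Psi_g\circ g\circ\psi_g^{-1}
\]
are both of the form \eqref{eq:monge}, with respective parameter pairs $(a_1^f(0),a_2^f(0))$ and $(a_1^g(0),a_2^g(0))$; the higher-order coefficients $a_3,b_1,b_2$ will not enter the argument.

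The expansion displayed immediately before the present proposition shows that the 2-jet at the origin of any surface in Monge form depends only on $a_1(0)$ and $a_2(0)$:
\[
j_0^2 \tilde f (u,v) = \bigl(u + a_1^f(0)\,uv,\ v + a_2^f(0)\,u^2,\ v + (\tfrac{1}{2} a_1^f(0) + a_2^f(0))\,u^2 + \tfrac{1}{4} v^2\bigr),
\]
and similarly for $\tilde g$. Coupling this with the identities $\kappa_L = a_1(0)$ and $\kappa_N = -\tfrac{1}{2}a_1(0) - 2a_2(0)$ recorded in the paper just before the statement, one sees that $(a_1(0),a_2(0))$ is an invertible linear function of $(\kappa_L,\kappa_N)$. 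The hypothesis $\kappa_L(f) = \kappa_L(g)$ and $\kappa_N(f)=\kappa_N(g)$ therefore forces $a_1^f(0) = a_1^g(0)$ and $a_2^f(0) = a_2^g(0)$, whence $j_0^2 \tilde f = j_0^2 \tilde g$.

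To conclude, I set $A := \Psi_f^{-1}\circ\Psi_g$, which is a Lorentz isometry of $\R^3_1$, and take $(u,v)$ to be the common source coordinates obtained by identifying $\psi_f$ and $\psi_g$ via the source diffeomorphism $\phi := \psi_f^{-1}\circ\psi_g$. Pulling the equality $j_0^2 \tilde f = j_0^2 \tilde g$ back by $\Psi_f^{-1}$ on the target and by $\psi_f$ on the source yields $j_0^2 f(u,v) = j_0^2(A\circ g)(u,v)$, as required. The only real obstacle here is notational bookkeeping---tracking how the two source reparametrizations and the two ambient Lorentz motions combine when one undoes the normalization---but because 2-jet equivalence is invariant under source diffeomorphisms and ambient isometries, no additional calculation is required beyond Proposition \ref{prop:monge} and the Monge-coefficient expressions for $\kappa_L,\kappa_N$ already established.
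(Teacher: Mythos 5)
Your argument is correct and is exactly the paper's (implicit) proof: the paper also reads the result off from the Monge form of Proposition \ref{prop:monge}, using that the $2$-jet of the normal form depends only on $\bigl(a_1(0),a_2(0)\bigr)$ and that $\kappa_L=a_1(0)$, $\kappa_N=-\tfrac12 a_1(0)-2a_2(0)$ is an invertible linear relation. The only thing you add is the explicit bookkeeping of how the two normalizations combine into the single isometry $A=\Psi_f^{-1}\circ\Psi_g$ and source reparametrization, which the paper leaves unstated; that step is handled correctly.
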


\section{The Gauss-Bonnet type formula}
\label{sec:GB}

In this section, 
applying the result by Pelletier \cite{Pelletier} and Steller \cite{Steller},
we obtain the Gauss-Bonnet type formula for mixed type surfaces
with bounded Gaussian curvature (Corollary \ref{cor:introD}).

\subsection{The Gauss-Bonnet type formula for metrics 
with varying signatures}

Let $\Sigma$ be a smooth $2$-manifold.
We call a smooth symmetric $(0,2)$-tensor
$g=\inner{~}{~}$ a {\it metric} on $\Sigma$.
A point $p\in \Sigma$ is called a {\it singular point} of $g$
if ${\rm rank}\, g_p <2$.
Set $S(g) := \{ p\in \Sigma \,;\, {\rm rank}\, g_p <2 \}$
the set of singular points.
Denote by 
$$
  \mathcal{N}_p:=\left\{
    \vect{v}\in T_p\Sigma\,;\,
    \inner{\vect{v}}{\vect{x}}=0  
    \text{~holds for any~} \vect{x}\in T_p\Sigma
  \right\}
$$
the {\it null space} of $g$ at $p$.
A singular point $p \in S(g)$ is called {\it corank one\/} 
if ${\rm dim}\,\mathcal{N}_p=1$.
A non-zero smooth vector field $\eta$
such that $\eta_p \in \mathcal{N}_p$ holds
for each $p \in S(g)$
is called a {\it null vector field}.
A metric $g$ is called {\it generic} if
the following conditions hold (cf.\ \cite[Definition 1]{Steller}):
\begin{itemize}
\item[($G_1$)]
The singular set $S(g)$ is a union of 
distinct simply closed regular smooth curves
$S_1,\dots,S_m$ $(m\geq1)$. 
Each $S_j$ $(j=1,\dots,m)$ is called a {\it singular curve}.
\item[($G_2$)]
Every singular point is corank one 
(namely, ${\rm dim}\,\mathcal{N}_p=1$ holds for $p \in S(g)$). 
\item[($G_3$)]
For any null vector field $\eta$, 
it holds that $\eta_p\inner{\eta}{\eta} \ne0$
for each singular point $p \in S(g)$.
\end{itemize}

For any smooth vector fields $X,Y,Z$ on $\Sigma$,
we set 
$\square_{X} Y(Z) : \mathfrak{X}(\Sigma)\times 
\mathfrak{X}(\Sigma)\times \mathfrak{X}(\Sigma) \to C^\infty(\Sigma)$ as
\begin{multline*}
  \square_{X} Y(Z)
  := \frac1{2} \Bigl( X\inner{Y}{Z} + Y\inner{Z}{X} - Z\inner{X}{Y}\\
        - \inner{X}{[Y,Z]} + \inner{Y}{[Z,X]} + \inner{Z}{[X,Y]}\Bigr),
\end{multline*}
which is called the {\it Levi-Civita dual connection} 
\cite{Kossowski1987proc}.
Letting $D$ be the Levi-Civita connection
on the non-singular set $\Sigma\setminus S(g)$,
it holds that $\square_{X} Y(Z)=\inner{D_{X} Y}{Z}$.
Then, a regular curve $\gamma$ in $(\Sigma,g)$ 
is called a {\it pseudo-geodesic} if
$$
  \square_{X} X(Z) =0
$$
holds, 
where $X$ and $Z$ are non-zero vector fields along $\gamma$
such that 
$X$ is tangent to $\gamma$
and $Z$ is orthogonal to $\gamma$.
We remark that this definition does not depend on the choice of such 
vector fields $X$ and $Z$, and 
is valid even on the singular set.

Let $dA$ be the canonical volume element of $(\Sigma,g)$.
Namely, on a coordinate neighborhood $(U;u,v)$,
$dA$ is written as
$$
  dA = \sqrt{|\lambda|}\,du\wedge dv
  \qquad
  (\lambda:=EG-F^2),
$$
where $g=Edu^2+2Fdu\,dv+Gdv^2$.
We set a signature function $\sigma_g$ on $(\Sigma,g)$ as
$$
  \sigma_g(p)
  = \begin{cases}
      -1 & \text{(if $g_p$ is negative definite)}, \\
      1 & \text{(otherwise)}. 
     \end{cases}
$$
Denoting by $K$ the Gaussian curvature function 
on the non-singular set $\Sigma\setminus S(g)$,
we call
\begin{equation}\label{eq:GCWS}
  \bar{K} := \sigma_g \, K
\end{equation}
the {\it Gaussian curvature-with-sign}.
According to Pelletier \cite{Pelletier} and Steller \cite{Steller},
the following Gauss-Bonnet type formula holds:

\begin{fact}\label{fact:GB}
Let $\Sigma$ be a connected compact oriented smooth $2$-manifold
without boundary.
If $g$ is a generic metric on $\Sigma$
such that the singular set $S(g)$ is non-empty
and each singular curve in $S(g)$ is a pseudo-geodesic,
then
$$
  \int_{\Sigma} \bar{K}\,dA = 2\pi \,\chi (\Sigma)
$$
holds, where $\chi (\Sigma)$ is the Euler characteristic of $\Sigma$,
and $\bar{K}$ is the Gaussian curvature-with-sign.
\end{fact}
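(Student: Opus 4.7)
The plan is to reduce this Fact to the classical Gauss--Bonnet theorem by decomposing $\Sigma$ along the singular curves, applying Gauss--Bonnet on truncated sub-regions, and passing to the limit. By $(G_1)$, the singular set $S(g) = S_1 \cup \cdots \cup S_m$ is a disjoint union of simply closed regular smooth curves, which partitions $\Sigma$ into finitely many open regions $\Omega_1, \ldots, \Omega_N$ on each of which $g$ has constant signature. The sign convention $\bar K = \sigma_g K$ is precisely what is needed so that the integrand in the classical Gauss--Bonnet formula on a (positive or negative definite) Riemannian piece matches $\bar K$, while on Lorentzian pieces the Lorentzian Gauss--Bonnet theorem directly yields $\int K\, dA$ as the bulk integrand.

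First I would analyze the situation locally near each singular curve $S_j$. Take a coordinate neighborhood $(U;u,v)$ with $S_j = \{v=0\}$, and set $\lambda := EG-F^2$. Condition $(G_3)$, combined with Proposition \ref{prop:type1}, forces $\lambda_v(u,0) \ne 0$, so the division lemma provides a factorization $\lambda = v\,\hat\lambda$ with $\hat\lambda(u,0) \ne 0$. For small $\epsilon > 0$, let $\Omega_i^\epsilon$ be the sub-region of $\Omega_i$ obtained by removing an $\epsilon$-tubular neighborhood of every adjacent singular curve. On each $\Omega_i^\epsilon$ the metric is smooth and non-degenerate, so the classical Gauss--Bonnet theorem (Riemannian on positive/negative definite pieces, Lorentzian on indefinite pieces) gives
\begin{equation*}
\int_{\Omega_i^\epsilon} \bar K\, dA + \int_{\partial \Omega_i^\epsilon} \bar \kappa_g\, ds = 2\pi\,\chi(\Omega_i^\epsilon),
\end{equation*}
with $\bar\kappa_g$ the appropriately signed geodesic curvature of the boundary (for the Lorentzian version one must also verify that the boundary curves $\{v = \pm\epsilon\}$ are non-null for small $\epsilon$, which follows from $\lambda_v(u,0) \ne 0$).

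Next I would sum over $i$ and take $\epsilon \to 0$. Since each $S_j \cong S^1$ has Euler characteristic zero, the additivity of $\chi$ for an excisive decomposition gives $\sum_i \chi(\Omega_i^\epsilon) = \chi(\Sigma)$ for all small $\epsilon$. The theorem then reduces to showing
\begin{equation*}
\lim_{\epsilon \to 0} \int_{\Omega_i^\epsilon} \bar K\, dA = \int_{\Omega_i} \bar K\, dA, \qquad
\lim_{\epsilon \to 0}\sum_i \int_{\partial \Omega_i^\epsilon} \bar\kappa_g\, ds = 0.
\end{equation*}
The pseudo-geodesic hypothesis enters crucially in the second limit: the condition $\square_X X(Z) = 0$ along $S_j$ for $X$ tangent and $Z$ transverse to $S_j$ is precisely the statement that the leading-order contribution to $\bar\kappa_g\, ds$ along the parallel curves $\{v = \pm\epsilon\}$ cancels in the limit (compare with the analysis of $\tilde\kappa_g(u,s)$ in Remark \ref{rem:geodesic-curvature}, where the vanishing of $k_1$ corresponds exactly to the pseudo-geodesic condition).

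The main obstacle is establishing these two limits, both of which require a careful expansion of $K$ and $\kappa_g$ in powers of $v$ near $S_j$. A straightforward computation shows $\bar\kappa_g = O(v^{-1/2})$ and $\bar K\, dA = O(v^{-1})\,du\,dv$ a priori, so neither integrability nor the vanishing of the boundary terms is automatic. The pseudo-geodesic condition forces the coefficient of the leading singular term in $\sqrt{|\lambda|}\,\bar\kappa_g$ along $S_j$ to vanish, which by an argument analogous to Proposition \ref{prop:lambda-K} also kills the $v^{-1}$ term in $\bar K\,\sqrt{|\lambda|}$, making $\bar K\, dA$ locally integrable near $S(g)$. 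Once integrability is secured, dominated convergence gives the first limit, and a direct calculation of the boundary integrals (symmetrizing contributions from the two sides of each $S_j$) together with the pseudo-geodesic condition gives the second. This is essentially the content of Steller's main technical lemma, which rigorously justifies passage to the limit.
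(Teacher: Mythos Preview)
The paper does not give its own proof of this statement. It is stated as a \emph{Fact} (Fact~\ref{fact:GB}), attributed to Pelletier \cite{Pelletier} and Steller \cite{Steller}, and simply cited from the literature without argument; the surrounding text reads ``According to Pelletier \cite{Pelletier} and Steller \cite{Steller}, the following Gauss--Bonnet type formula holds,'' followed by a remark that Steller's version is in fact more general. So there is nothing in the paper to compare your proposal against.

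That said, your sketch is a reasonable outline of how the Pelletier--Steller argument actually proceeds: cut out tubular neighborhoods of the singular curves, apply Gauss--Bonnet on each piece, and control the boundary and bulk contributions in the limit using the pseudo-geodesic hypothesis. Two points are worth flagging if you intend this as a self-contained proof rather than a summary. First, the Lorentzian Gauss--Bonnet theorem you invoke on the indefinite pieces is itself nontrivial and requires some care with orientation and sign conventions (this is part of what Pelletier's paper establishes). Second, your claim that the pseudo-geodesic condition ``kills the $v^{-1}$ term in $\bar K\sqrt{|\lambda|}$'' is the heart of the matter and deserves an actual computation rather than an analogy with Proposition~\ref{prop:lambda-K}; in fact the integrability of $\bar K\,dA$ near $S(g)$ does \emph{not} follow from the pseudo-geodesic condition alone in Steller's setting---the boundary terms from the two sides of each $S_j$ may individually diverge and only cancel in the sum, so one must symmetrize \emph{before} taking the limit rather than argue integrability on each $\Omega_i$ separately. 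Your final paragraph hints at this but the earlier displayed limit $\lim_{\epsilon\to 0}\int_{\Omega_i^\epsilon}\bar K\,dA = \int_{\Omega_i}\bar K\,dA$ is stated for each $i$ individually, which is stronger than what is actually true in general.
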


We remark that Steller \cite{Steller} proves 
the Gauss-Bonnet type formula
in more general situations
(admitting the intersection points of singular curves), 
see \cite{Steller}.

\subsection{The Gauss-Bonnet type formula for metrics 
for mixed type surfaces}

Now, we apply 
the Gauss-Bonnet type formula for generic metrics 
(Fact \ref{fact:GB})
to mixed type surfaces.

Let $f : \Sigma \to (M^3,\bar{g})$ be a mixed type surface.
Then, the first fundamental form 
$ds^2$ of $f$ induces a metric on $\Sigma$.
Then, a point $p\in \Sigma$ is a lightlike point of the surface $f$
if and only if $p$ is a singular point of the metric $ds^2$.
Namely, the lightlike point set $LD$ of $f$ coincides with 
the singular set $S(ds^2)$.
Hence, if $ds^2$ is generic as a metric 
satisfying the condition of the pseudo-geodesics,
we may apply the Gauss-Bonnet type formula 
(Fact \ref{fact:GB}).

Here, we translate the conditions for generic metrics 
($G_1$)--($G_3$) into our language.
As we remarked in Section \ref{sec:prelim},
every lightlike point $p\in LD$ of a mixed type surface $f$
is a corank one singular point of the metric $ds^2$
(cf.\ \eqref{lem:corank1}).
Namely, the condition ($G_2$) is always satisfied.
Then, by Proposition \ref{prop:type1},
the condition ($G_3$) holds if and only if
every lightlike point $p\in LD$ is of the first kind.
In particular, $p$ is non-degenerate,
and hence, 
each connected component of $LD$
is a regular curve,
so the condition ($G_1$) is satisfied.
In conclusion, we have the following:

\begin{lemma}\label{lem:generic-Steller}
The first fundamental form 
$ds^2$ of a mixed type surface $f : \Sigma \to M^3$
is a generic metric on $\Sigma$ if and only if
every lightlike point $p\in LD$ is of the first kind.
\end{lemma}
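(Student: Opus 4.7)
The plan is to verify the three defining conditions $(G_1)$, $(G_2)$, $(G_3)$ for $ds^2$ to be a generic metric, and to match them against the lightlike-point-of-first-kind condition. First I would note the tautological identification $LD=S(ds^2)$: a point $p\in\Sigma$ is lightlike for $f$ precisely when $(ds^2)_p$ is degenerate. With this, condition $(G_2)$ holds automatically without any extra assumption on $f$, since \eqref{lem:corank1} already gives $\dim \mathcal{N}_p = 1$ at every lightlike point of a mixed type surface.

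The main step is to read off $(G_3)$ from Proposition \ref{prop:type1}: the equivalence $(P_1)\Leftrightarrow(P_3)$ there says that $p$ is a lightlike point of the first kind if and only if $\eta_p\inner{\eta}{\eta}\ne 0$ for any null vector field $\eta$ defined near $p$, which is exactly the content of $(G_3)$ at $p$. Consequently $(G_3)$ holds at every singular point of $ds^2$ if and only if every lightlike point of $f$ is of the first kind, and this already yields the ``only if'' direction of the lemma.

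For the ``if'' direction it remains to establish $(G_1)$ under the hypothesis that every lightlike point is of the first kind. By Definition \ref{def:types-LPT}, being of the first kind presupposes non-degeneracy, so $d\lambda\ne 0$ along $LD$. The implicit function theorem then makes $LD=\lambda^{-1}(0)$ a regular embedded $1$-submanifold of $\Sigma$; under the compactness hypothesis on $\Sigma$ that will be in force when the lemma is applied (as in Corollary \ref{cor:introD}), $LD$ is automatically a disjoint union of embedded circles, which is $(G_1)$. The proof is essentially an assembly of results already in hand, so there is no substantial obstacle; the only delicate point is that the phrase ``simply closed'' in $(G_1)$ tacitly uses the compactness of $\Sigma$ supplied by the intended application.
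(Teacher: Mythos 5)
Your proof is correct and follows essentially the same route as the paper: $(G_2)$ is automatic from the corank-one property \eqref{lem:corank1}, $(G_3)$ is matched to the first-kind condition via the equivalence $(P_1)\Leftrightarrow(P_3)$ of Proposition \ref{prop:type1}, and $(G_1)$ follows from non-degeneracy of first-kind points making $LD$ a union of regular curves. Your remark that the ``simply closed'' clause in $(G_1)$ tacitly relies on compactness of $\Sigma$ is a fair observation that the paper's own proof also passes over silently.
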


From now on, 
assume that every lightlike point $p\in LD$
is of the first kind.
Take a characteristic curve $\gamma(t)$ 
passing through $p=\gamma(0)$.
We shall review 
the condition that $\gamma$ 
to be a pseudo-geodesic given in \cite{Steller}.
Take a coordinate neighborhood $(U;u,v)$
of $p$ such that
$F=0$ on $U$
and 
$G(u,0)=0$ hold.
In other words,
$(u,v)$ is an orthogonal coordinate system,
such that the $u$-axis is a singular curve 
and $\partial_v$ is a null vector field.
Hence we may call such $(u,v)$
an {\it orthogonally adapted coordinate system}.
Then, the following was proved in \cite[Proposition 1]{Steller}:
\[
\begin{minipage}{0.9\linewidth}
{\it 
If $(U;u,v)$ is an orthogonally adapted coordinate system,
$\gamma(u)=(u,0)$
is a pseudo-geodesic if and only if 
$E_v(u,0)=0$.
}\end{minipage}
\]
Thus, we have the following.

\begin{lemma}
\label{lem:Prop1-Steller}
Let $f : \Sigma \to (M^3,\bar{g})$ be a mixed type surface
such that every lightlike point is of the first kind.
Then, each connected component of $LD$
is a pseudo-geodesic if and only if 
every lightlike point has vanishing lightlike singular curvature,
namely $\kappa_L(p)=0$ for any $p\in LD$.
\end{lemma}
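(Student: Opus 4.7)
The plan is to reduce the pseudo-geodesic condition on each connected component of $LD$ to the vanishing of $E_v$ along the characteristic curve, and then apply Proposition \ref{prop:kappa_L-adap} to identify this with $\kappa_L=0$.

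First I fix an arbitrary $p\in LD$ (of the first kind by hypothesis) and take an adapted coordinate system $(U;u,v)$ centered at $p$, so that $\gamma(u)=(u,0)$ parametrizes the connected component of $LD$ through $p$ and $\partial_v$ is a null vector field along $\gamma$. The adaptedness condition \eqref{eq:adapted-EFG} gives $F(u,0)=G(u,0)=0$, so $\partial_v$ is orthogonal to $\partial_u$ on $\gamma$. Taking $X=\partial_u$ and $Z=\partial_v$ and using $[\partial_u,\partial_v]=0$, the definition of the Levi-Civita dual connection collapses to
\[
  \square_{\partial_u}\partial_u(\partial_v) = F_u - \tfrac{1}{2}E_v,
\]
which, restricted to $\gamma$ and using $F(u,0)\equiv 0$, equals $-\tfrac{1}{2}E_v(u,0)$. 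Hence $\gamma$ is a pseudo-geodesic if and only if $E_v(u,0)=0$, which is the criterion quoted from \cite[Proposition 1]{Steller}, now expressed in our adapted (rather than orthogonally adapted) coordinates.

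Next, by Proposition \ref{prop:kappa_L-adap},
\[
  \kappa_L(u) = -\frac{E_v(u,0)}{2E(u,0)\sqrt[3]{G_v(u,0)}},
\]
with $E(u,0)>0$ because $f\circ\gamma$ is spacelike by \eqref{eq:image-is-spacelike}, and $G_v(u,0)\ne 0$ by condition $(P_3)$ of Proposition \ref{prop:type1} applied at each point of $\gamma$. Consequently, $\kappa_L\equiv 0$ along $\gamma$ is equivalent to $E_v(u,0)\equiv 0$, which combined with the previous step yields the equivalence at each point of $LD$. Passing to connected components of $LD$ gives the lemma.

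The only mild subtlety I foresee is the use of the Steller-type criterion in an adapted rather than an orthogonally adapted coordinate system. This is justified by tensoriality of $\square_X X$ in its third slot: its value at a point of $\gamma$ depends only on $Z$ at that point, not on its extension. Since $\partial_v$ is orthogonal to $\partial_u$ on $\gamma$, the Koszul-type computation goes through unchanged, and one avoids having to construct orthogonally adapted coordinates explicitly.
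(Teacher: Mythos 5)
Your proof is correct and follows essentially the same route as the paper: reduce the pseudo-geodesic condition along a characteristic curve to $E_v(u,0)=0$ and then invoke Proposition \ref{prop:kappa_L-adap} (noting $E(u,0)>0$ and $G_v(u,0)\neq 0$) to identify this with $\kappa_L\equiv 0$. The one difference is that the paper constructs an orthogonally adapted coordinate system ($F\equiv 0$ on $U$, $G(u,0)=0$) and quotes \cite[Proposition 1]{Steller} verbatim, whereas you work in a merely adapted system and re-derive the criterion by computing $\square_{\partial_u}\partial_u(\partial_v)=F_u-\tfrac12 E_v=-\tfrac12 E_v$ on the $u$-axis; your justification via $C^\infty$-linearity of $\square_XY(Z)$ in $Z$ and the orthogonality $F(u,0)=0$ is valid, and this variant is slightly more self-contained since it avoids constructing the orthogonal coordinates.
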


\begin{proof}
Let $(U;u,v)$ be an orthogonally adapted coordinate system
centered at a lightlike point $p=(0,0)$ of the first kind.
By \cite[Proposition 1]{Steller}, 
the characteristic curve $\gamma(u)=(u,0)$ 
is a pseudo-geodesic if and only if 
$E_v(u,0)=0$.
On the other hand, 
by Proposition \ref{prop:kappa_L-adap},
$E_v(u,0)=0$ if and only if 
$\kappa_L(u)=0$, which gives the desired result.
\end{proof}

By Fact \ref{fact:GB} and  
Lemmas \ref{lem:generic-Steller} and \ref{lem:Prop1-Steller},
we have the following Gauss-Bonnet type formula 
for mixed type surfaces as follows:

\begin{theorem}[The Gauss-Bonnet type formula 
for mixed type surfaces]
\label{thm:GB-kappaL}
Let $f : \Sigma \to (M^3,\bar{g})$ be a mixed type surface 
in a Lorentzian $3$-manifold $(M^3,\bar{g})$,
where $\Sigma$ is 
a connected compact oriented smooth $2$-manifold
without boundary.
If every lightlike point of $f$ is of the first kind
having zero lightlike singular curvature $\kappa_L=0$,
then
$$
  \int_{\Sigma} K\,dA = 2\pi \,\chi (\Sigma)
$$
holds, where $\chi (\Sigma)$ is the Euler characteristic of $\Sigma$.
\end{theorem}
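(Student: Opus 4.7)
The strategy is to apply the Pelletier--Steller Gauss--Bonnet formula (Fact \ref{fact:GB}) directly to the first fundamental form $ds^2$, viewed as a metric with varying signature on the closed $2$-manifold $\Sigma$. The proof then reduces to three checks: the genericity hypothesis on $ds^2$, the pseudo-geodesic condition on each component of $LD$, and the identification of the Gaussian curvature-with-sign $\bar{K}$ with the ordinary Gaussian curvature $K$.

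First I verify that $ds^2$ is a generic metric in the sense of $(G_1)$--$(G_3)$. Since $f$ is of mixed type, a continuous path from a spacelike point to a timelike point forces $\lambda=EG-F^2$ to change sign, so $LD=S(ds^2)$ is non-empty. Condition $(G_2)$ (corank one at every singular point) is automatic by \eqref{lem:corank1}. Under the hypothesis that every lightlike point is of the first kind, the equivalence $(P_1)\Leftrightarrow(P_3)$ in Proposition \ref{prop:type1} gives $(G_3)$, while the implication $(P_1)\Rightarrow(P_2)$ gives non-degeneracy; combined with the compactness of $\Sigma$, the implicit function theorem realizes $LD$ as a finite disjoint union of embedded regular simple closed curves, establishing $(G_1)$. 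This is recorded as Lemma \ref{lem:generic-Steller}. The pseudo-geodesic condition on each component of $LD$ is precisely the vanishing of the lightlike singular curvature, by Lemma \ref{lem:Prop1-Steller}, and this too is part of the hypothesis.

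It remains to identify $\bar{K}$ with $K$. In a Lorentzian $3$-manifold every tangent plane contains at most one timelike direction, so the restriction of $\bar{g}$ to $df_p(T_p\Sigma)$ is either positive definite (at a spacelike point), of signature $(1,1)$ (at a timelike point), or degenerate (at a lightlike point), but never negative definite. Consequently the signature function $\sigma_{ds^2}$ appearing in \eqref{eq:GCWS} is identically $1$ on $\Sigma\setminus LD$, which forces $\bar{K}=K$ there. Applying Fact \ref{fact:GB} to $(\Sigma,ds^2)$ now yields
$$
  \int_{\Sigma} K\,dA \;=\; \int_{\Sigma} \bar{K}\,dA \;=\; 2\pi\,\chi(\Sigma),
$$
as claimed.

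The argument is essentially a bookkeeping synthesis of earlier results; there is no genuine obstacle, since the two technical lemmas of this section already translate the Pelletier--Steller hypotheses into the language of lightlike points of the first kind and vanishing $\kappa_L$. The only subtle point is the signature observation in the final paragraph, which ensures the sign factor $\sigma_{ds^2}$ is trivial in the Lorentzian setting (in contrast to the more general setup of \cite{Steller}), so that the usual Gaussian curvature may be integrated without modification.
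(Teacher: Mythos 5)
Your proposal is correct and follows essentially the same route as the paper's own proof: invoke Lemma \ref{lem:generic-Steller} for genericity, Lemma \ref{lem:Prop1-Steller} for the pseudo-geodesic condition, observe that $(ds^2)_p$ is never negative definite so $\bar{K}=K$, and apply Fact \ref{fact:GB}. The extra details you supply (non-emptiness of $LD$ from the mixed-type hypothesis, and the signature argument for $\sigma_{ds^2}\equiv 1$) are consistent with, and slightly more explicit than, the paper's version.
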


\begin{proof}
By Lemma \ref{lem:generic-Steller},
the first fundamental form $ds^2$ of the surface $f$
is a generic metric on $\Sigma$.
Also, by Lemma \ref{lem:Prop1-Steller},
each singular curve of $ds^2$ is a pseudo-geodesic.
Moreover, since $(ds^2)_p$ cannot be negative definite
at each point $p\in \Sigma$, 
the Gaussian curvature-with-sign $\bar{K}$
given in \eqref{eq:GCWS} coincides with 
the Gaussian curvature $K$ on the 
non-lightlike point set $\Sigma\setminus LD$.
Therefore, by a direct conclusion of Fact \ref{fact:GB},
we have the desired result.
\end{proof}

\begin{proof}[Proof of Corollary \ref{cor:introD}]
By Theorem \ref{thm:introB}, if $K$ is bounded, 
then every non-degenerate lightlike point must be of the first kind.
Moreover, by Theorem \ref{thm:introB},
if $K$ is bounded, $\kappa_L(p)=0$ holds 
for any lightlike point $p\in LD$ of the first kind.
Then, Theorem \ref{thm:GB-kappaL} 
gives the desired results.
\end{proof}

\begin{example}\label{ex:flat-torus}
Let $\tilde{f} : \R \times S^1 \to \R^3_1$ be an immersion
defined by 
$$
  \tilde{f}(u,v):=(u,\cos v,\sin v)
  \qquad
  (u\in \R,~v\in S^1:=\R/2\pi\Z),
$$
which is a cylinder over a circle in the timelike $yz$-plane
(Figure \ref{fig:fronts}). 
We can check that every lightlike point
of $\tilde{f}$ is non-degenerate, and $\tilde{f}$ is flat 
(i.e., $K=0$ on the non-lightlike point set).
In particular, $K$ is bounded.
By Theorem \ref{thm:introB},
we have every lightlike point
of $\tilde{f}$ is of the first kind and 
$\kappa_L=0$ along the lightlike point set $LD$.
The parallel translation 
$$
{\rm pr}(x,y,z):=(x+2\pi,y,z)
\qquad
((x,y,z)\in \R^3_1)
$$
generates a subgroup 
$\Gamma=\langle{\rm pr}\rangle$ 
of the isometry group of $\R^3_1$.
Let $M^3$ be a flat Lorentz $3$-manifold
given as a quotient 
$M^3 := \R^3_1/\Gamma$.
Then, $\tilde{f}$ induces a mixed type surface
$f : S^1\times S^1\to M^3$.
Since $\tilde{f}$ is flat, so is $f$.
And also, every singular 
lightlike point of $f$ is of the first kind and 
$\kappa_L=0$ along the lightlike point set $LD$.
Hence, this example verifies Theorem \ref{thm:GB-kappaL}
and  Corollary \ref{cor:introD}. 
\end{example}

\begin{acknowledgement}
The authors thank the referee
for careful reading and valuable comments.
They also thank 
Masaaki Umehara and Kotaro Yamada 
for their advice.
\end{acknowledgement}


\medskip
\end{document}